\newtheorem{definition}{Definition}
\newtheorem{lemma}{Lemma}[section]
\newtheorem{theorem}{Theorem}
\newtheorem{proposition}{Proposition}
\newtheorem{corollary}{Corollary}
\theoremstyle{definition}
\newtheorem{remark}{Remark}
\numberwithin{equation}{section}
\DeclareMathOperator{\LL}{LL}
\newcommand{\Di}{\ensuremath{\mathcal D}\xspace}
\newcommand{\Fi}{\ensuremath{\mathcal F}\xspace}
\newcommand{\Hi}{\ensuremath{\mathcal H}\xspace}
\newcommand{\Ii}{\ensuremath{\mathcal I}\xspace}
\newcommand{\Oi}{\ensuremath{\mathcal O}\xspace}
\newcommand{\Si}{\ensuremath{\mathcal S}\xspace}
\newcommand{\dimH}{\ensuremath{{\dim}_{\text{\normalfont\tiny H}}}\xspace}
\newcommand{\vset}{\ensuremath{{\emptyset}}\xspace}
\newcommand{\pth}[1]{(#1)}
\newcommand{\pthb}[1]{\bigl(#1\bigr)}
\newcommand{\pthB}[1]{\Bigl(#1\Bigr)}
\newcommand{\pthbb}[1]{\biggl(#1\biggr)}
\newcommand{\bkt}[1]{[#1]}
\newcommand{\bktb}[1]{\bigl[#1\bigr]}
\newcommand{\bktbb}[1]{\biggl[#1\biggr]}
\newcommand{\brc}[1]{\{#1\}}
\newcommand{\brcb}[1]{\bigl\{#1\bigr\}}
\newcommand{\brcB}[1]{\Bigl\{#1\Bigr\}}
\newcommand{\brcbb}[1]{\biggl\{#1\biggr\}}
\newcommand{\scpr}[2]{\langle #1,#2 \rangle}
\newcommand{\scprb}[2]{\bigl\langle #1,#2 \bigr\rangle}
\newcommand{\dt}{\ensuremath{\mathrm d}\xspace} 
\newcommand{\eqdef}{:=}
\newcommand{\eqas}{\overset{\mathrm{a.s.}}{=}}
\newcommand{\ivoo}[1]{\ensuremath{(#1)}}
\newcommand{\ivoob}[1]{\ensuremath{\bigl(#1\bigr)}}
\newcommand{\ivof}[1]{\ensuremath{(#1]}}
\newcommand{\ivofb}[1]{\ensuremath{\bigl(#1\bigr]}}
\newcommand{\ivfo}[1]{\ensuremath{[#1)}}
\newcommand{\ivfob}[1]{\ensuremath{\bigl[#1\bigr)}}
\newcommand{\ivff}[1]{\ensuremath{[#1]}}
\newcommand{\ivffb}[1]{\ensuremath{\bigl[#1\bigr]}}
\newcommand{\abs}[1]{\lvert#1\rvert}
\newcommand{\absb}[1]{\bigl\lvert#1\bigr\rvert}
\newcommand{\absbb}[1]{\biggl\lvert#1\biggr\rvert}
\newcommand{\norm}[1]{\lVert#1\rVert}
\newcommand{\normb}[1]{\bigl\lVert#1\bigr\rVert}
\newcommand{\normbb}[1]{\biggl\lVert#1\biggr\rVert}
\newcommand{\floor}[1]{\lfloor#1\rfloor}
\newcommand{\ceil}[1]{\lceil#1\rceil}
\newcommand{\pr}[2][]{\mathbb{P}#1\pth{#2}}
\newcommand{\prb}[2][]{\mathbb{P}#1\pthb{\hspace{1pt}#2\hspace{1pt}}}
\newcommand{\prB}[2][]{\mathbb{P}#1\pthB{#2}}
\newcommand{\prbb}[2][]{\mathbb{P}#1\pthbb{#2}}
\newcommand{\prcb}[3][]{\mathbb{P}#1\pthb{\hspace{1pt}#2\bigm|#3\hspace{1pt}}}
\newcommand{\prcB}[3][]{\mathbb{P}#1\pthB{#2\Bigm|#3}}
\newcommand{\prcbb}[3][]{\mathbb{P}#1\pthbb{#2\biggm|#3}}
\newcommand{\esp}[2][]{\mathbb{E}#1\bkt{#2}}
\newcommand{\espb}[2][]{\mathbb{E}#1\bktb{\hspace{1pt}#2\hspace{1pt}}}
\newcommand{\espc}[3][]{\mathbb{E}#1\bkt{\hspace{1pt}#2\hspace{1.5pt}|\hspace{1.5pt}#3\hspace{1pt}}}
\newcommand{\e}{\ensuremath{\mathrm{e}}\xspace}
\newcommand{\R}{\ensuremath{\mathbf{R}}\xspace}
\newcommand{\Q}{\ensuremath{\mathbf{Q}}\xspace}
\newcommand{\N}{\ensuremath{\mathbf{N}}\xspace}
\newcommand{\Z}{\ensuremath{\mathbf{Z}}\xspace}
\newcommand{\indi}{\ensuremath{\mathbf{1}}\xspace}
\newcommand{\eps}{\varepsilon}
\newcommand{\sbullet}{{\substack{\text{\fontsize{4pt}{4pt}$\bullet$}}}}
\newcommand{\vsp}{\vspace{.15cm}}
\newcommand{\hex}{\hspace{1ex}}
\begin{document}

\begin{frontmatter}

\title{Fine regularity of L\'evy processes and linear (multi)fractional stable motion}
\runtitle{Fine regularity of L\'evy processes and linear (multi)fractional stable motion}
\hypersetup{pdftitle=Fine regularity of L\'evy processes and linear (multi)fractional stable motion}

\author{\fnms{Paul} \snm{Balan\c{c}a}%
\ead[label=e1]{paul.balanca@ecp.fr}%
\ead[label=u1,url]{www.mas.ecp.fr/recherche/equipes/modelisation\_probabiliste}%
}%

\address{
\printead{e1}\\\printead{u1}\\[1em]
\'Ecole Centrale Paris\\
Laboratoire MAS, ECP\\
Grande Voie des Vignes - 92295 Ch\^atenay-Malabry, France
}

\affiliation{\'Ecole Centrale Paris}
\runauthor{Paul Balan\c{c}a}

\begin{abstract}
  In this work, we investigate the fine regularity of L\'evy processes using the 2-microlocal formalism. This framework allows us to refine the multifractal spectrum determined by Jaffard and, in addition, study the oscillating singularities of Lévy processes. The fractal structure of the latter is proved to be more complex than the classic multifractal spectrum and
  is determined in the case of alpha-stable processes. As a consequence of these fine results and the properties of the 2-microlocal frontier, we are also able to completely characterise the multifractal nature of the linear fractional stable motion (extension of fractional Brownian motion to $\alpha$-stable measures) in the case of continuous and unbounded sample paths as well. The regularity of its multifractional extension is also presented, indirectly providing an example of a stochastic process with a non-homogeneous and random multifractal spectrum.
\end{abstract}

\begin{keyword}[class=AMS]
  \kwd{60G07}
  \kwd{60G17}
  \kwd{60G22}
  \kwd{60G44}
\end{keyword}

\begin{keyword}
  \kwd{2-microlocal analysis}
  \kwd{H\"older regularity}
  \kwd{multifractal spectrum}
  \kwd{oscillating singularities}
  \kwd{L\'evy processes}
  \kwd{linear fractional stable motion}
\end{keyword}

\end{frontmatter}


\section{Introduction}

The study of sample path continuity and H\"older regularity of stochastic processes is a very active field of research in probability theory. The existing literature provides a variety of uniform results on local regularity, especially on the modulus of continuity, for rather general classes of random fields (see e.g. \citet{Marcus.Rosen-2006}, \citet{Adler.Taylor-2007} on Gaussian processes and \citet{Xiao-2010} for more recent developments).

On the other hand, the structure of pointwise regularity is generally more complex as the latter often tends to behave erratically as time passes. 
This type of sample path behaviour was first put into light on Brownian motion by \citet{Orey.Taylor-1974} and \citet{Perkins-1983}. 
They respectively studied \emph{fast} and \emph{slow points} which characterize logarithmic variations of the pointwise modulus of continuity, and proved that the sets of times with a given pointwise regularity have a distinct fractal geometry. \citet{Khoshnevisan.Shi-2000} have recently extended this study of fast points to fractional Brownian motion.

Lévy processes with a jump compound also present an interesting pointwise behaviour. Indeed, \citet{Jaffard-1999} has proved that despite the random variations of the pointwise exponent, the level sets of the latter show a specific fractal structure. This seminal work has been enhanced and extended by \citet{Durand-2009}, \citet{Durand.Jaffard-2012} and \citet{Barral.Fournier.ea-2010}. Particularly, the latter have proved that Markov processes have a range of admissible pointwise behaviours wider and richer than Lévy processes. In the aforementioned works, \emph{multifractal analysis} happens to be the key concept to study and characterise the local fluctuations of the pointwise regularity. In order to be more specific, we recall a few definitions.
\begin{definition}[Pointwise exponent]  \label{def:pointwise_holder}
  A function $f:\R\rightarrow\R^d$ belongs to $C^\alpha_t$, where $t\in\R$ and $\alpha>0$, if there exist $C>0$, $\rho>0$ and a polynomial $P_t$ of degree less than $\alpha$ such that
  \[
    \forall u\in B(t,\rho);\quad \norm{ f(u) - P_t(u) } \leq C \abs{t - u}^\alpha.
  \]
  The \emph{pointwise H\"older exponent} of $f$ at $t$ is then defined by $\alpha_{f,t} = \sup\brc{ \alpha\geq 0 : f\in C^\alpha_t }$, where by convention $\sup\brc{\emptyset} = 0$.
\end{definition}
Multifractal analysis is interested in the fractal geometry of the level sets of the pointwise exponent, which are also called the \emph{iso-H\"older sets} of $f$:
\begin{equation}  \label{eq:def_set_Eh}
  E_h = \brcb{ t\in\R : \alpha_{f,t} = h}\quad\text{for every $h\in\R_+\cup\brc{+\infty}$.}
\end{equation}
The geometry of the collection $(E_h)_{h\in\R_+}$ is then studied through its Hausdorff dimension, defining for that purpose the \emph{local spectrum of singularities} $d_f(h,V)$ of $f$:
\begin{equation}  \label{eq:def_spectrum}
  d_f(h,V) = \dimH(E_h \cap V)\quad\text{for every $h\in\R_+\cup\brc{+\infty}$ and $V\in\Oi$,}
\end{equation}
where $\Oi$ designates the collection of nonempty open sets of $\R$ and $\dimH$ is the Hausdorff dimension, with by convention $\dimH(\vset) = -\infty$ (we refer to \cite{Falconer-2003} for the complete definition of the latter).

Even though $(E_h)_{h\in\R_+}$ are random sets, stochastic processes such as L\'evy processes \cite{Jaffard-1999}, L\'evy processes in multifractal time \cite{Barral.Seuret-2007} and fractional Brownian motion have a deterministic multifractal spectrum. Furthermore, these random fields are also said to be \emph{homogeneous} since the quantity $d_X(h,V)$ is independent of the open set $V$ for any $h\in\R_+$. In addition, when the pointwise exponent is constant along sample paths, the spectrum is described as \emph{degenerate}, i.e. its support is reduced to a single point (e.g. the Hurst exponent $H$ in the case of f.B.m.). 
Nevertheless, note that \citet{Barral.Fournier.ea-2010} and \citet{Durand-2008} have provided examples of respectively Markov jump processes and wavelet random series with a non-homogeneous and random spectrum of singularities.
\vsp

As outlined in Equations \eqref{eq:def_set_Eh} and \eqref{eq:def_spectrum}, multifractal analysis usually focuses on the structure of pointwise regularity. Unfortunately, as presented by \citet{Meyer-1998}, the pointwise Hölder exponent suffers of a couple of drawbacks: it lacks of stability under the action of pseudo-differential operators and it is not always characterised by the wavelets coefficients. In addition, several simple deterministic examples such as the \emph{Chirp function} $t\mapsto \abs{t}^\alpha\sin\pthb{\abs{t}^{-\beta}}$ show that it does not fully capture the local geometry and oscillations of a function.

Several approaches, such as the \emph{oscillating, chirp and weak scaling exponents} introduced by \citet{Arneodo.Bacry.ea-1998} and \citet{Meyer-1998}, have emerged in the literature to address the limits of the pointwise exponent and supplement the latter by characterising other aspects of the local regularity. Interestingly, the aforementioned concepts are embraced by a single framework called \emph{2-microlocal analysis}. It has first been introduced by \citet{Bony-1986} in the deterministic frame to study singularities of generalised solutions of PDEs. Several authors have then investigated in \cite{Jaffard-1991,Jaffard.Meyer-1996,Meyer-1998,LevyVehel.Seuret-2004} this framework more deeply, determining in particular the close connection between the 2-microlocal formalism and the previous scaling exponents. More recently, \citet{Herbin.LevyVehel-2009} have developed a stochastic approach of this framework to investigate the fine regularity of stochastic processes such as Gaussian processes, martingales and stochastic integrals. 

Similarly to the pointwise Hölder exponent, the introduction of this formalism starts with the definition of appropriate functional spaces, named \emph{2-microlocal spaces}. We begin with a simpler, but narrower, definition to give an intuition of these concepts.
\begin{definition}  \label{def:2ml_spaces}
  Suppose $t\in\R$, $s'\in\R$ and $\sigma\in\ivoo{0,1}$ such that $\sigma-s'\notin\N$. A function $f:\R\rightarrow\R^d$ belongs to the \emph{2-microlocal space} $C^{\sigma,s'}_t$ if there exist $C>0$, $\rho>0$ and a polynomial $P_t$ such that for all $u,v\in B(t,\rho)$:
  \begin{equation} \label{eq:def_2ml_spaces}
    \normb{ f(u)-P_t(u) - f(v)+P_t(v) } \leq C\abs{u-v}^\sigma \pthb{ \abs{u-t}+\abs{v-t} }^{-s'}.
  \end{equation}
  In addition, $P_t$ is unique if we suppose its degree is smaller than $\sigma-s'$. In this case, it corresponds to the Taylor polynomial of order $\floor{\sigma-s'}$ of $f$ at $t$.
\end{definition}
The 2-microlocal spaces are therefore parametrised by a pair $(s',\sigma)$ of real numbers and we clearly observe on Equation~\eqref{eq:def_2ml_spaces} that they extend the underlying ideas of the classic Hölder spaces. To define these elements for any $\sigma\in\R\setminus\Z$, we need to slightly complexify the form of the increments considered.
\begin{definition}  \label{def:2ml_spaces_gen}
  Suppose $t\in\R$ and $b < t$ is fixed. In addition, consider $s'\in\R$, $\sigma\in\R\setminus\Z$ and $k\in\Z$ such that $\sigma-s'\notin\N$ and $\sigma+k\in\ivoo{0,1}$. A function $f:\R\rightarrow\R^d$ belongs to the \emph{2-microlocal space} $C^{\sigma,s'}_t$ if there exist $C>0$, $\rho>0$ and a polynomial $P_{t,k}$ such that  for all $u,v\in B(t,\rho)$:
  \begin{equation}  \label{eq:2ml_spaces_time_gen}
    \normb{I^{k}_{b+} f(u)-P_{t,k}(u) - I^{k}_{b+} f(v)+P_{t,k}(v)} \leq C\abs{u-v}^{\sigma+k} \pthb{ \abs{u-t}+\abs{v-t} }^{-s'},
  \end{equation}
  where $I^{k}_{b+}f$ designates the derivative of order $-k$ when $k\leq 0$ and the iterated integral of order $k$ when $k> 0$, i.e. $\pthb{I^{k}_{b+} f}(u) \eqdef 1/\Gamma(k-1) \int_b^u (u-s)^{k-1} f(s) \,\dt s$.
\end{definition}
The time-domain characterisation \eqref{eq:def_2ml_spaces}-\eqref{eq:2ml_spaces_time_gen} of 2-microlocal spaces has first been obtained by \citet{Kolwankar.LevyVehel-2002} in the case $\sigma\in\ivoo{0,1}$ and then extended by \citet{Seuret.LevyVehel-2003} and \citet{Echelard-2007} to $\sigma\in\R\setminus\Z$. Note that the previous characterisation does not depend on the value of the constant $b$, since a modification of the latter simply induces an adjustment of the polynomial $P_t$. 

Even though, we restrict ourselves in Definitions~\ref{def:2ml_spaces}-\ref{def:2ml_spaces_gen} to usual functions, 2-microlocal spaces were originally introduced by \citet{Bony-1986} for tempered distributions $\Si'(\R)$. The first definition given by \citet{Bony-1986} relies on the Littlewood--Paley decomposition of distributions, and thereby corresponds to a description in the Fourier space. Another characterisation based on wavelet coefficients has also been presented by \citet{Jaffard-1991}. In addition, note that the previous characterisation is in fact equivalent the \emph{localised 2-microlocal spaces} which are also defined for distributions in $\Di'(\R)$ (we refer to \cite{Meyer-1998} for a more precise distinction between global and local definitions of the 2-microlocal spaces). 

One major property of the 2-microlocal spaces is their stability under the action of pseudo-differential operators. In particular, as proved by \citet[Th 1.1]{Jaffard.Meyer-1996}, they satisfy
\begin{equation} \label{eq:2ml_spaces_int}
  \forall \alpha>0;\quad f\in C^{\sigma,s'}_t \quad\Longleftrightarrow\quad  I_+^\alpha f \in C^{\sigma+\alpha,s'}_t,
\end{equation}
where the fractional integral of $f$ of order $\alpha\geq 0$ is defined by: $\pthb{I_+^\alpha f}(u) \eqdef 1/\Gamma(\alpha)\int_\R (u-s)_+^{\alpha-1} f(s) \,\dt s$. Note that the latter definition of the operator $I^\alpha_+$ coincides with the fractional integral presented in \cite{Jaffard.Meyer-1996} for tempered distributions (we refer to the book of \citet{Samko.Kilbas.ea-1993} for an extensive study of the subject).

Similarly to the pointwise H\"older exponent, the introduction of 2-microlocal spaces leads naturally to the definition of a regularity tool named the \emph{2-microlocal frontier}:
\[
  \forall s'\in\R;\quad \sigma_{f,t}(s') = \sup\brcb{\sigma\in\R : f\in C^{\sigma,s'}_{t}}.
\]
Due to several inclusion properties of the 2-microlocal spaces, the map $s'\mapsto\sigma_{f,t}(s')$ is well-defined and satisfies:
\begin{itemize} \itemsep1pt
  \item $\sigma_{f,t}(\cdot)$ is a concave non-decreasing function;
  \item $\sigma_{f,t}(\cdot)$ has left and right derivatives between $0$ and $1$.
\end{itemize}
Furthermore, as a consequence of Equation~\eqref{eq:2ml_spaces_int}, $\sigma_{f,t}(\cdot)$ is stable under the action of pseudo-differential operators. 
As a function, the 2-microlocal frontier $\sigma_{f,t}(\cdot)$ offers a more complete and richer description of the local regularity and cover in particular the usual Hölder exponents:
\begin{equation*}
  \widetilde\alpha_{f,t} = \sigma_{f,t}(0) \quad\text{and}\quad \alpha_{f,t} = -\inf\brc{s' : \sigma_{f,t}(s')\geq 0},
\end{equation*}
where the last equality has been proved by \citet{Meyer-1998} under the assumption $\omega(h) = \mathrm{O}\,(1/\abs{\log(h)})$ on the modulus of continuity of $f$. Several other scaling exponents previously outlined can also be retrieved from the frontier: the \emph{chirp} and \emph{weak scaling exponents} introduced by \citet{Meyer-1998} are given by:
\begin{equation*}
  \beta^c_{f,t} = \brcbb{ \frac{\dt \sigma_{f,t}}{\dt s'}\biggl\vert_{s'\rightarrow-\infty} }^{\!-1}-1 \quad\text{and}\quad
  \beta^w_{f,t} = \lim_{s'\rightarrow-\infty} \sigma_{f,t}(s') - s';
\end{equation*}
These two elements characterise the asymptotic regularity of a function after a large number of integrations and the latter was been specifically introduced to supplement the pointwise exponent in multifractal analysis. The \emph{oscillating exponent} defined by \citet{Arneodo.Bacry.ea-1998} can also be retrieved from the 2-microlocal frontier:
\begin{equation*}
  \beta^o_{f,t} = \brcbb{ \frac{\dt \sigma_{f,t}}{\dt s'}\biggl\vert_{s' = -\alpha_{f,t\, -}} }^{\!-1}-1.
\end{equation*}
The latter aims to capture the oscillating behaviour by studying the regularity after infinitesimal integrations. Note that the original definition of these exponents are based on Hölder spaces (see \cite{Seuret.Vehel-2002} for an extensive review).

In the stochastic framework, Brownian motion provides a simple example of 2-microlocal frontier: with probability one and for all $t\in\R$
\begin{equation}  \label{eq:2ml_bm}
  \forall s'\in\R;\quad \sigma_{B,t}(s') = \pthB{\frac{1}{2} + s'}\wedge\frac{1}{2}.
\end{equation}
Using the common terminology of \citet{Arneodo.Bacry.ea-1997} and \citet{Meyer-1998}, Brownian motion is said to have \emph{cusp singularities} as $\beta^w_{B,t}=\alpha_{B,t}$ and $\beta^o_{B,t} = 0$,. On the other hand, \emph{oscillating singularities} appear when the slope of the frontier is strictly smaller than $1$ at $s'=-\alpha_{f,t}$, or equivalently, when $\beta^w_{B,t}>\alpha_{B,t}$. This oscillating behaviour is well-illustrated by the chirp function whose frontier and scaling exponents at $0$ respectively are equal to $\sigma_{f,0}(s') = (\alpha+s')(1+\beta)$, $\alpha_{f,0}=0$, $\beta^c_{f,0}=\beta^o_{f,0}=\beta$ and $\beta^w_{f,0}=\infty$.\vsp

In this paper, we combine the 2-microlocal formalism with the classic use of multifractal analysis to obtain a finer and richer description of the regularity of Lévy processes. Following the path of \cite{Jaffard-1999,Durand-2009,Durand.Jaffard-2012}, we extend the multifractal description (Section~\ref{sec:2ml_levy}) to the aforementioned scaling exponents and the 2-microlocal frontier. We present in particular how this formalism allows to capture and describe the oscillating singularities of Lévy processes. The fractal structure of the latter is determined for a few classes of Lévy processes which include alpha-stable processes.

This finer analysis of the sample path properties of L\'evy processes happens to be very useful for the study of another class of processes named linear fractional stable motion (LFSM). The LFSM is a common $\alpha$-stable self-similar process with stationary increments which can be seen as the extension of the fractional Brownian motion to the non-Gaussian frame. In Section~\ref{sec:2ml_lfsm}, we completely characterize the multifractal nature of the LFSM, unifying the geometrical description of the sample paths independently of their boundedness. In addition, we also extend this analysis to the multifractional generalisation of the LFSM.

\subsection{Statement of the main results}

As it is well known, an $\R^d$-valued L\'evy process $(X_t)_{t\in\R_+}$ has stationary and independent increments. Furthermore, its law is determined by the L\'evy--Khintchine formula (see e.g. \cite{Sato-1999}): for all $t\in\R_+$ and $\lambda\in\R^d$, $\esp{e^{i\scpr{\lambda}{X_t}}} = e^{t\psi(\lambda)}$ where $\psi$ is given by
\[
  \forall \lambda\in\R^d;\quad\psi(\lambda) = i\scpr{a}{\lambda} - \frac{1}{2}\scpr{\lambda}{Q\lambda} + \int_{\R^d} \pthb{ e^{ i\scpr{\lambda}{x} } - 1 - i\scpr{\lambda}{x} \indi_{\brc{\norm{x}\leq 1}} } \pi(\dt x).
\]
In the previous expression, $Q$ is a non-negative symmetric matrix and $\pi$ is the L\'evy measure, i.e. a positive Radon measure on $\R^d\setminus\brc{0}$ such that $\int_{\R^d} (1\wedge \norm{x}^2) \,\pi(\dt x) < \infty$. 
Throughout this paper, it will always be assumed that $\pi(\R^d) = +\infty$ since otherwise, the L\'evy process corresponds to the sum of a simple compound Poisson process with drift and a Brownian motion whose regularity is well-known.

Sample path properties of L\'evy processes are known to depend on the growth of the L\'evy measure near the origin. More precisely, \citet{Blumenthal.Getoor-1961} have defined the following exponents $\beta$ and $\beta'$,
\begin{equation} \label{eq:def_beta}
  \beta = \inf\brcbb{ \delta \geq 0 : \int_{\R^d} \pthb{ 1\wedge \norm{x}^\delta } \,\pi(\dt x) < \infty }\quad\text{and}\quad \beta' =
  \begin{cases}
    \beta\ &\text{if } Q = 0; \\
    2  &\text{if }Q\neq 0. 
  \end{cases}
\end{equation}
Owing to $\pi$'s definition, $\beta,\beta'\in\ivff{0,2}$. \citet{Pruitt-1981} proved that $\alpha_{X,0} \eqas 1/\beta$ when $Q=0$. Note that several other exponents have been introduced in the literature to study the sample path properties of L\'evy processes  (see e.g. \cite{Khoshnevisan.Xiao-2002,Khoshnevisan.Shieh.ea-2008} for some recent developments).

\citet{Jaffard-1999} has studied the spectrum of singularities of L\'evy processes under the following assumption on the measure $\pi$,
\begin{equation} \label{eq:hyp_jaffard}
  \sum_{j\in\N} 2^{-j} \sqrt{ C_j \log(1 + C_j) } < \infty,\quad\text{where }\ C_j = \int_{2^{-j-1} < \norm{x} \leq 2^{-j} } \pi(\dt x).
\end{equation}
Under the Hypothesis~\eqref{eq:hyp_jaffard}, Theorem 1 in \cite{Jaffard-1999} states that the multifractal spectrum of a Lévy process $X$ is almost surely equal to 
\begin{equation} \label{eq:spectrum_levy}
  \forall V\in\Oi;\quad d_X(h,V) = 
  \begin{cases}
    \beta h   & \text{ if } h\in\ivfo{0, 1/\beta'}; \\
    1         & \text{ if } h = 1/\beta'; \\
    -\infty   & \text{ if } h\in\ivof{1/\beta',+\infty}.
  \end{cases}
\end{equation}
\citet{Durand-2009} has extended this result to Hausdorff $g$-measures, where $g$ is a gauge function, and \citet{Durand.Jaffard-2012} have generalized the study to multivariate L\'evy fields.\vsp

In this work, we first establish in Proposition \ref{prop:pointwise_levy} a new proof of the multifractal spectrum~\eqref{eq:spectrum_levy} which does not require Assumption~\eqref{eq:hyp_jaffard}. Results obtained by \citet{Durand-2009} on Hausdorff $g$-measure are also indirectly extended using this method.

In order to refine and extend the spectrum of singularities \eqref{eq:spectrum_levy} using the 2-microlocal formalism, we are interested the fractal geometry of the collections of sets $(\widetilde{E}_h)_{h\in\R_+}$ and $(\widehat{E}_h)_{h\in\R_+}$ respectively defined by
\[
  \widetilde{E}_h = \brcb{t\in E_h : \forall s'\in\R;\  \sigma_{X,t}(s') = (h+s')\wedge 0 }\quad\text{ and }\quad \widehat{E}_h = E_h \setminus \widetilde{E}_h.
\]
The introduction of these two collections corresponds to the natural distinction presented in the literature \cite{Arneodo.Bacry.ea-1997,Arneodo.Bacry.ea-1998,Meyer-1998} between two types of singularities: the family $(\widetilde{E}_h)_{h\in\R_+}$ gathers the \emph{cusp singularities} of Lévy processes, i.e. times at which the slope of the 2-microlocal frontier is equal to $1$, whereas the collection $(\widehat{E}_h)_{h\in\R_+}$ regroups the oscillating singularities of the process, i.e. when $\beta^w_{X,t} > \alpha_{X,t}$ and $\beta^o_{X,t} > 0$.

In our first important result, we provide a general description of the fractal geometry of these singularities. 
\begin{theorem}  \label{th:2ml_levy}
  Suppose $X$ is a Lévy process such that $\beta>0$. Then, with probability one, the \emph{cusp singularities} $(\widetilde{E}_h)_{h\in\R_+}$ of $X$ satisfy
  \begin{equation} \label{eq:2ml_levy1}
    \forall V\in\Oi;\quad 
    \dimH(\widetilde{E}_h\cap V) = 
    \begin{cases}
      \beta h   & \text{ if } h\in\ivfo{0, 1/\beta'}; \\
      1         & \text{ if } h = 1/\beta'; \\
      -\infty   & \text{ if } h\in\ivof{1/\beta',+\infty}.
    \end{cases}
  \end{equation}
  Furthermore, the \emph{oscillating singularities} $(\widehat{E}_h)_{h\in\R_+}$ of $X$ are such that
  \begin{equation} \label{eq:2ml_levy2}
    \forall V\in\Oi;\quad
    \dimH(\widehat{E}_h\cap V) \leq
    \begin{cases}
      2\beta h - 1  & \text{ if } h\in\ivoo{1/2\beta, 1/\beta'}; \\
      -\infty   & \text{ if } h\in\ivff{0,1/2\beta}\cup\ivff{1/\beta',+\infty},
    \end{cases}
  \end{equation}
  where the 2-microlocal frontier at $t\in\widehat{E}_h$ verifies $\sigma_{X,t}(s') \leq \pthb{ \tfrac{h+s'}{2\beta h} } \wedge \pthb{\tfrac{1}{\beta'}+s'}\wedge 0$ for all $s'\in\R$.
\end{theorem}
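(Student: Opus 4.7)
The plan is to combine the L\'evy--It\^o decomposition at dyadic scales with a careful analysis of how jumps approach $t$. For each $n \geq 1$, write $X = M_n + J_n$, where $J_n$ is the compound Poisson process gathering the jumps of size in $(2^{-n-1}, 2^{-n}]$ (of intensity $\sim 2^{n\beta}$, by definition of $\beta$) and $M_n = X - J_n$ carries the Brownian/drift component together with all smaller jumps. A preliminary lemma, based on uniform modulus of continuity estimates for L\'evy processes, will supply a uniform-in-$t$ lower bound $\sigma_{M_n, t}(s') \geq (1/\beta' + s') \wedge 1/\beta' - \eps$ with high probability; all singular features of $X$ then come from the jumps of $J_n$.

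For each $t$ and $n$, let $d_n(t)$ denote the distance from $t$ to the nearest jump of $J_n$ and set $\gamma_n(t) = -\log d_n(t)/(n\log 2)$. A Borel--Cantelli analysis, carried out in Proposition \ref{prop:pointwise_levy}, yields $\alpha_{X,t} = 1/\limsup_n \gamma_n(t)$ and recovers the spectrum. The upper bound on $\sigma_{X,t}$ is obtained by testing Definition \ref{def:2ml_spaces}, together with its extension to $\sigma < 0$ via \eqref{eq:2ml_spaces_int}, on pairs $(u,v)$ straddling the jump realising $d_n(t)$. For $t \in \widetilde E_h$ the typical approximation configuration ($\gamma_n(t) \to 1/h$ on a positive-density subsequence of scales) combined with the contribution of $M_n$ yields the frontier $\sigma_{X,t}(s') = (h+s')\wedge 0$, giving \eqref{eq:2ml_levy1} locally. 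The Hausdorff identity follows: $\widetilde E_h \subset E_h$ gives $\dim_\Hi(\widetilde E_h \cap V) \leq \beta h$, and the matching lower bound is a consequence of \eqref{eq:2ml_levy2}, since $2\beta h - 1 < \beta h$ for $h < 1/\beta$ forces the dimension-full part of $E_h$ to sit in $\widetilde E_h$; the borderline case $h = 1/\beta'$ exploits that $E_{1/\beta'}$ has full Lebesgue measure in $V$.

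The main obstacle is \eqref{eq:2ml_levy2} and the refined frontier bound at points of $\widehat E_h$. A point $t \in \widehat E_h$ has $\alpha_{X,t} = h$ but admits a frontier strictly more regular than $(h+s') \wedge 0$ at some $s' < -h$. Unpacking the 2-microlocal inequality, I expect such a deviation to arise only when the approximation by $J_n$ has a specific two-scale structure: along each relevant $n$, $t$ is approximated at rate $1/(\beta h)$ by \emph{two} essentially independent jumps at comparable distances, one realising the pointwise exponent and the other inducing the slope defect in $\sigma_{X,t}$. A refined optimisation of the 2-microlocal test pair across both jumps produces the upper bound $\sigma_{X,t}(s') \leq \frac{h+s'}{2\beta h}$, complemented by $\leq 1/\beta' + s'$ from $M_n$ and by the trivial $\leq 0$ from mere accumulation of jumps. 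The Hausdorff bound $\dim_\Hi(\widehat E_h \cap V) \leq 2\beta h - 1$ then follows from a second-order Borel--Cantelli estimate: at scale $n$ the $\sim 2^{n\beta}$ jumps of $J_n$ contribute two independent factors intersected with a covering at scale $\sim 2^{-n/h}$, giving the effective exponent $2\beta h - 1$; in particular $\widehat E_h = \vset$ whenever $h \leq 1/(2\beta)$.
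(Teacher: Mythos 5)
Your overall strategy -- dyadic truncation of the jump measure, scale-matched modulus estimates for the small-jump remainder, approximation rates by jumps for the pointwise exponent, and a pair-counting argument giving $\dim_\Hi(\widehat{E}_h)\leq 2\beta h-1$ with $\widehat{E}_h=\emptyset$ for $h\leq 1/(2\beta)$ -- is the same as the paper's, and the dimension bookkeeping ($E_h=\widetilde{E}_h\sqcup\widehat{E}_h$ with $2\beta h-1<\beta h$ forcing $\dim_\Hi\widetilde{E}_h=\beta h$) is exactly how Corollary \ref{cor:2ml_levy} and \eqref{eq:2ml_levy1} are obtained. However, there are concrete gaps. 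First, your preliminary lemma is stated in the wrong direction and is false as stated: for a \emph{fixed} truncation level $n$, the remainder $M_n$ still has infinite activity with the same Blumenthal--Getoor exponent, so it admits no uniform-in-$t$ frontier lower bound of the form $(1/\beta'+s')\wedge 1/\beta'-\eps$. What is true, and what the proof actually needs (Lemmas \ref{lemma:tech_lemma1}--\ref{lemma:tech_lemma2}), is the scale-matched estimate $\sup_{|u-v|\leq 2^{-m}}\|Y^{m/\delta}_u-Y^{m/\delta}_v\|\lesssim m2^{-m/\delta}$, where the truncation level is tied to the increment length. Moreover, regularity of $M_n$ can only give a \emph{lower} bound on $\sigma_{X,t}$; the term $(1/\beta'+s')$ in the theorem is an \emph{upper} bound, and when $Q\neq 0$ it requires showing that the Brownian oscillation survives superposition with the jump part. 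This needs a uniform lower bound $\sup_{u,v\in B(t,\rho)}\|B_u-B_v\|\geq\rho^{1/2+\eps}$ valid for all $t$ simultaneously (Lemma \ref{lemma:bm_incr}) together with the existence, near every $t$, of a window free of jumps of comparable size where that oscillation dominates. Your proposal does not address this, and "complemented by $\leq 1/\beta'+s'$ from $M_n$" cannot be made to work as written.

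Second, and most substantively, your mechanism for the bound $\sigma_{X,t}(s')\leq\frac{h+s'}{2\beta h}$ at $t\in\widehat{E}_h$ does not go through. The points of $\widehat{E}_h$ are precisely those where the approximating jump is accompanied, at every scale, by a nearby jump of comparable size which may \emph{cancel} it in the primitive $\int Y_s\,\dt s$ (this is exactly the construction of Example \ref{ex:2ml_levy2}); a test pair "straddling both jumps" therefore need not detect a large increment, and an optimisation over such pairs can fail entirely. The paper instead steps back to a coarser window: by a first-moment count (Lemma \ref{lemma:tech_lemma3} with $k=1$), \emph{every} jump of size $2^{-m}$ is isolated from comparable jumps at distance $2^{-m\gamma}$ for any $\gamma>2\beta$, so the increment of the primitive over an adjacent interval of length $\rho_n=2^{-m_n\gamma}$ is bounded below by $\rho_n 2^{-m_n}$ unconditionally; rewriting this in the form $\rho_n^{\sigma+1}|t-t_n|^{-s'}$ is what produces the exponent $\frac{h+s'}{2\beta h}$, the $2\beta$ in the denominator being the universal isolation scale rather than anything read off the particular pair configuration at $t$. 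You should also note that your pair-counting gives nothing at the endpoint $h=1/\beta'$ (where $2\beta h-1=1$ when $\beta'=\beta$), so the claim $\widehat{E}_{1/\beta'}=\emptyset$ needs a separate argument.
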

\begin{remark}
Theorem~\ref{th:2ml_levy} induces that $\dimH(\widehat{E}_h) < \dimH(\widetilde{E}_h)$ for every $h\in\ivff{0, 1/\beta'}$. Therefore, in terms of Hausdorff dimension, chirp oscillations that might appear on a Lévy process are always singular compared to the common cusp behaviour. 

We also note that even though sample paths of L\'evy processes do not satisfy the condition $\omega(h)=\text{O}(1/\abs{\log(h)})$ outlined in the introduction, Theorem~\ref{th:2ml_levy} nevertheless ensures that the pointwise H\"older exponent can be retrieved from the 2-microlocal frontier at any $t\in\R_+$ using the formula $\alpha_{X,t} = -\inf\brc{s' : \sigma_{X,t}(s')\geq 0}$. As a consequence, the pointwise regularity of Lévy processes can also be characterised by its wavelet coefficients.
\end{remark}

The determination of the 2-microlocal regularity of Lévy processes allows to deduce the behaviour of several scaling exponents. In particular, we are interested in the multifractal spectrum of the \emph{weak scaling exponent}, whose level sets are defined as:
\begin{equation*}  \label{eq:def_set_Eh_weak}
  E^w_h = \brcb{ t\in\R : \beta^w_{X,t} = h}\quad\text{for every $h\in\R_+\cup\brc{+\infty}$.}
\end{equation*}
\begin{corollary}  \label{cor:2ml_levy_scalings}
  Suppose $X$ is a Lévy process such that $\beta>0$. Then, with probability one
  \begin{equation} \label{eq:2ml_spectrum_levy_weak}
    \forall V\in\Oi; \quad \dimH( E^w_h \cap V) = 
    \begin{cases}
      \beta h   & \text{ if } h\in\ivfo{0,1/\beta'}; \\
      1         & \text{ if } h = 1/\beta'; \\
      -\infty   & \text{ otherwise.}
    \end{cases}
  \end{equation}
  Furthermore, the \emph{oscillating exponent} is such that $\beta^o_{X,t} \leq \max\pthb{0,2\beta h - 1}$ and
  \begin{equation}
    \dimH \brcb{ t\in E_h : \beta^o_{X,t} > 0 } \leq
    \begin{cases}
      2\beta h - 1  & \text{ if } h\in\ivoo{1/2\beta, 1/\beta'}; \\
      -\infty   & \text{ otherwise.}
    \end{cases}
  \end{equation}
  Finally, the \emph{chirp scaling exponent} satisfies $\beta^c_{X,t} = 0$ for all $t\in\R$.
\end{corollary}
According to Corollary~\ref{cor:2ml_levy_scalings}, the multifractal spectrum associated to the weak scaling exponent is the same as the classic one~\eqref{eq:spectrum_levy} despite the oscillating singularities which might exist. We also note that the latter do not influence the chirp scaling exponent, showing that chirp oscillations tend to disappear after multiple integrations.

Following the ideas presented by \citet{Meyer-1998}, it is also natural to investigate geometrical properties of the sets $(E_{\sigma,s'})_{\sigma,s'\in\R}$ defined by
\[
  E_{\sigma,s'} = \brcb{ t\in\R_+ : \forall u'>s';\ X_\sbullet\in C^{\sigma,u'}_t\ \text{ and }\ \forall u'<s';\ X_\sbullet\notin C^{\sigma,u'}_t }.
\]
This collection of sets can be seen as the level sets of the 2-microlocal frontier for a fixed $\sigma$.
\begin{corollary} \label{cor:2ml_levy}
  Suppose $X$ is a Lévy process such that $\beta>0$. Then, with probability one and for all $\sigma\in\R_-$,
  \begin{equation} \label{eq:2ml_spectrum_levy}
    \forall V\in\Oi; \quad \dimH(E_{\sigma,s'} \cap V) = 
    \begin{cases}
      \beta s   & \text{ if } s\in\ivfo{0,1/\beta'}; \\
      1         & \text{ if } s = 1/\beta'; \\
      -\infty   & \text{ otherwise.}
    \end{cases}
  \end{equation}
  where $s$ denotes the common 2-microlocal parameter $s = \sigma - s'$. Furthermore, for all $s'\in\R$, $E_{0,s'} = E_{-s'}$ and $E_{\sigma,s'}$ is empty if $\sigma > 0$. 
\end{corollary}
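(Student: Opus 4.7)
The corollary follows from Theorem~\ref{th:2ml_levy} by translating the event $\brc{t \in E_{\sigma,s'}}$---the 2-microlocal frontier crossing the point $(s',\sigma)$ with a strict transition from below---into a condition on the pointwise exponent $\alpha_{X,t}$. For the lower bound, fix $\sigma \leq 0$ and let $s = \sigma - s' \geq 0$: at any $t \in \widetilde{E}_s$, Theorem~\ref{th:2ml_levy} gives $\sigma_{X,t}(u') = (s+u') \wedge 0$, which equals $\sigma$ at $u' = s'$, equals $s+u' < \sigma$ for $u' < s'$, and exceeds $\sigma$ for $u' > s'$. Hence $\widetilde{E}_s \subseteq E_{\sigma,s'}$, and the dimension formula of Theorem~\ref{th:2ml_levy} immediately yields $\dim_\Hi(E_{\sigma,s'}\cap V) \geq \beta s$ for $s \in \ivfo{0,1/\beta'}$, and $\geq 1$ at $s = 1/\beta'$.

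For the upper bound I would prove the inclusion $E_{\sigma,s'} \subseteq \brc{t \in V : \alpha_{X,t} \leq s}$. At any $t$ in the left-hand side, the formula $\alpha_{X,t} = -\inf\brc{u' : \sigma_{X,t}(u')\geq 0}$---valid for L\'evy processes by the third remark following Theorem~\ref{th:2ml_levy}---combined with the concavity of $\sigma_{X,t}(\cdot)$ and its slope bounded by $1$, yields $\alpha_{X,t}\leq s$: when $\sigma=0$, the defining strict transition forces the infimum to equal $s'$, so $\alpha_{X,t}= -s' = s$; when $\sigma<0$, the frontier must reach $0$ at some $u^* \geq s'$, and the slope bound gives $0 \leq \sigma + (u^* - s')$, i.e.\ $u^* \geq -s$. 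The required estimate $\dim_\Hi(\brc{t\in V : \alpha_{X,t}\leq s}) \leq \beta s$ is then the direct covering bound underlying the new proof of the multifractal spectrum in Proposition~\ref{prop:pointwise_levy}, which yields the sharp matching bound $\dim_\Hi(E_{\sigma,s'}\cap V) \leq \beta s$ for $s \in \ivfo{0,1/\beta'}$ and $\leq 1$ at $s=1/\beta'$.

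The remaining statements are shorter. For $\sigma > 0$: the $\wedge 0$ in the frontier bound of Theorem~\ref{th:2ml_levy} shows $\sigma_{X,t}(s') \leq 0$ on $\widehat{E}_h$ for every $s' \geq -1/\beta'-1$, which together with the explicit form $(h+s')\wedge 0$ on $\widetilde{E}_h$ and the monotonicity of the frontier gives $\sigma_{X,t}(s') \leq 0$ for all $t$ and all $s'\in\R$, so the frontier never reaches $\sigma>0$. For $s > 1/\beta'$: the same theorem provides $\sigma_{X,t}(s') \leq 1/\beta'+s'$, which forces $s \leq 1/\beta'$, a contradiction; and $s<0$ is excluded by $\alpha_{X,t}\geq 0$. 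Finally, the identity $E_{0,s'}=E_{-s'}$ follows from combining the two inclusions above in the $\sigma=0$ case, where $\alpha_{X,t} = -s' = s$ exactly. The main technical hurdle is the upper bound $\dim_\Hi\brc{t : \alpha_{X,t}\leq s} \leq \beta s$ invoked in the second paragraph; modulo this ingredient, already established earlier in the paper, the rest is geometric book-keeping on the 2-microlocal frontier.
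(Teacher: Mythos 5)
Your argument is correct, and the lower bound is exactly the paper's: $\widetilde{E}_s\subseteq E_{\sigma,s'}$ together with $\dim_\Hi(\widetilde{E}_s\cap V)=\beta s$ from Theorem~\ref{th:2ml_levy}. Where you genuinely diverge is the upper bound. The paper proves the inclusion $E_{\sigma,s'}\subseteq \widetilde{E}_s\cup\bigcup_{h<s}\widehat{E}_h$ and then controls the exceptional part by $\widehat{E}_h\subseteq F_s$ with $\dim_\Hi(F_s)\leq 2\beta s-1<\beta s$, i.e.\ it reuses the $\widetilde{E}/\widehat{E}$ dichotomy and the refined estimate of Lemma~\ref{lemma:2ml_levy3}. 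You instead prove $E_{\sigma,s'}\subseteq\brc{t:\alpha_{X,t}\leq s}$ via the slope-at-most-one property of the frontier, and conclude with the classical covering bound $\dim_\Hi\brc{t\in V:\alpha_{X,t}\leq s}\leq\beta s$ (which is indeed available without Hypothesis \eqref{eq:hyp_jaffard}, since $\brc{\alpha_{Y,t}\leq s}\setminus S\subseteq A_\delta$ for every $\delta<1/s$ by Proposition~\ref{prop:pointwise_levy} and $\dim_\Hi(A_\delta)\leq\beta/\delta$ by Jaffard's covering argument). Your route is slightly more economical --- it bypasses the $\widehat{E}_h$ machinery entirely for this corollary --- while the paper's decomposition makes explicit \emph{why} the exceptional behaviour of Theorem~\ref{th:2ml_levy} is invisible in \eqref{eq:2ml_spectrum_levy}, namely that the sets $\widehat{E}_h$ are dimensionally negligible. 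Two small points of care: in your lower bound the frontier does not strictly exceed $\sigma$ for $u'>s'$ when $\sigma=0$ (it is identically $0$ there), so membership in $C^{0,u'}_t$, and hence the identity $E_{0,s'}=E_{-s'}$, really rests on Meyer's Theorem~3.13 as invoked at the start of Section~\ref{ssec:2ml_levy} rather than on the frontier value alone; and your union $\brc{\alpha_{X,t}\leq s}=\bigcup_{h\leq s}E_h$ is uncountable, so one must use the single inclusion into $A_\delta$ rather than summing the individual bounds $\dim_\Hi(E_h)\leq\beta h$ --- both of which you essentially acknowledge.
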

As for the weak scaling exponent, we obtain in Corollary~\ref{cor:2ml_levy} a multifractal spectrum which takes the same form as Equation~\eqref{eq:spectrum_levy} (note that the latter corresponds to the case $\sigma=0$). In addition, the oscillating singularities are also not captured by these scaling exponents and the spectrum associated.\vsp

Theorem~\ref{th:2ml_levy} provides an upper bound of the Hausdorff dimension of the oscillating singularities of a general Lévy process. In Section~\ref{ssec:2ml_alpha}, we obtain the exact estimates for some specific classes of Lévy processes, proving in particular that the Blumenthal--Getoor exponent does not entirely characterise the structure of these chirp oscillations. 
\begin{proposition}  \label{prop:2ml_levy_oneside}
  Suppose $\pi$ is a Lévy measure on $\R$ such that $\pi(\R_\pm) = 0$ and $X$ is a Lévy process with generating triplet $(a,Q,\pi)$. Then, with probability one, $\widehat{E}_h = \vset$ for all $h\in\R_+$, i.e.
  \[
    \forall t\in\R_+, \ \forall s'\in\R;\quad \sigma_{X,t}(s') = \pthb{\alpha_{X,t} + s'}\wedge 0. 
  \]
\end{proposition}
Note in particular that subordinators do not have oscillating singularities, which is quite understandable because of their monotonicity.

Nevertheless, these singularities might appear as well for rather natural classes of processes such as alpha-stable Lévy processes.
\begin{theorem}  \label{th:2ml_alpha}
  Suppose $X$ is a Lévy process parametrised by $(0,0,\pi)$, where the Lévy measure $\pi$ has the following form
  \begin{align}  \label{eq:levy_alpha_gen}
    \pi(\dt x) = a_1 \,\abs{x}^{-1-\alpha_1} \,\indi_{\R_+}\dt x + a_2 \,\abs{x}^{-1-\alpha_2} \,\indi_{\R_-}\dt x,
  \end{align}
  and  $a_1,a_2 > 0$ and $\alpha_1,\alpha_2\in\ivoo{0,2}$. 

  Then, the Blumenthal--Getoor exponent of $\pi$ is equal to $\beta = \max\pth{\alpha_1,\alpha_2}$ and with probability one, the oscillating singularities of $X$ satisfy
  \begin{equation}  \label{eq:2ml_alpha_gen}
    \forall V\in\Oi;\quad
    \dimH(\widehat{E}_h\cap V) =
    \begin{cases}
      (\alpha_1 + \alpha_2) h - 1  & \text{ if}\hex h\in\ivoob{1/(\alpha_1 + \alpha_2), 1/\beta}; \\
      -\infty        & \text{ otherwise.}
    \end{cases}
  \end{equation}
\end{theorem}
One of the interesting aspects of the previous result is to show that the Hausdorff dimension of the oscillating singularities of Lévy processes is not necessarily governed by the Blumenthal--Getoor exponent, but also takes into account the symmetrical aspect of the Lévy measure. Furthermore, Theorem~\ref{th:2ml_alpha} proves that the upper bound obtained in Theorem~\ref{th:2ml_levy} is optimal, since in the case of an alpha-stable process parametrised by $(\alpha,\beta_\alpha)$, with probability one
\begin{equation}  \label{eq:2ml_alpha}
  \forall V\in\Oi;\quad
  \dimH(\widehat{E}_h\cap V) =
  \begin{cases}
    2\alpha h - 1  & \text{ if } h\in\ivoo{1/2\alpha, 1/\alpha}\text{ and } \beta_\alpha\in\ivoo{-1,1}; \\
    -\infty        & \text{ otherwise.}
  \end{cases}
\end{equation}
Note that owing to Proposition~\ref{prop:2ml_levy_oneside}, an alpha-stable process whose skewness parameter $\beta_\alpha$ is equal to $1$ or $-1$ does not have oscillating singularities.\vsp

The fine 2-microlocal structure presented Theorems~\ref{th:2ml_levy} and \ref{th:2ml_alpha} happens to be interesting outside the scope of L\'evy processes. More precisely, it allows to characterized the multifractal nature of the linear fractional stable motion (LFSM). The latter is a fractional extension of alpha-stable Lévy processes and is usually defined by the following stochastic integral (see e.g. \cite{Samorodnitsky.Taqqu-1994})
\begin{equation} \label{eq:def_lfsm}
  X_t = \int_\R  \brcB{ (t-u)_+^{H-1/\alpha} - (-u)_+^{H-1/\alpha} }  \,M_{\alpha}(\dt u),
\end{equation}
where $M_{\alpha}$ is an alpha-stable random measure parametrised by $\alpha\in\ivoo{0,2}$ and $\beta_\alpha\in\ivff{-1,1}$, and $H\in\ivoo{0,1}$ is the Hurst exponent. Several regularity properties have been determined in the literature. In particular, sample paths are known to be nowhere bounded \cite{Maejima-1983} if $H < 1/\alpha$ and H\"older continuous when $H > 1/\alpha$. In this latter case, \citet{Takashima-1989,Kono.Maejima-1991} proved that the pointwise and local H\"older exponents satisfy almost surely $H-1/\alpha \leq \alpha_{X,t} \leq H$ and $\widetilde{\alpha}_{X,t} = H-1/\alpha$. Throughout this paper, we will assume that $\alpha\in\ivfo{1,2}$, which is required to obtain H\"older continuous sample paths ($H > 1/\alpha$).

Using an alternative representation of LFSM presented in Proposition~\ref{prop:rep_lfsm}, we enhance the aforementioned regularity results and obtain a precise description of the multifractal structure of the LFSM.
\begin{theorem}  \label{th:2ml_lfsm}
  Suppose $X$ is a linear fractional stable motion parametrized by $\alpha\in\ivfo{1,2}$, $\beta_\alpha\in\ivff{-1,1}$ and $H\in\ivoo{0,1}$. Then, with probability one and for all $\sigma\leq H-\tfrac{1}{\alpha}$
  \begin{equation} \label{eq:2ml_spectrum_lfsm}
    \forall V\in\Oi;\quad \dimH\pth{ E_{\sigma,s'}\cap V } =
    \begin{cases}
      \alpha (s - H) + 1   & \text{ if } s\in\ivffb{H-\tfrac{1}{\alpha},H}; \\
      -\infty   & \text{ otherwise.}
   \end{cases}
  \end{equation}
  where $s = \sigma - s'$. When $\sigma > H-\tfrac{1}{\alpha}$, $E_{\sigma,s'}$ is empty for all $s'\in\R$. 

  \noindent In addition, the weak scaling exponent satisfies with probability one
  \begin{equation} \label{eq:2ml_spectrum_lfsm_weak}
    \forall V\in\Oi; \quad \dimH( E^w_h \cap V) = 
    \begin{cases}
      \alpha (h - H) + 1   & \text{ if } h\in\ivffb{H-\tfrac{1}{\alpha},H}; \\
      -\infty   & \text{ otherwise.}
    \end{cases}
  \end{equation}
  Finally, the chirp scaling exponent $\beta^c_{X,t}$ is equal to $0$ for all $t\in\R$.
\end{theorem}
Therefore, we observe that the multifractal structure presented in Theorem~\ref{th:2ml_lfsm} corresponds to the spectrum of alpha-stable processes translated by a factor $H-\tfrac{1}{\alpha}$.
Interestingly, we also note that on the contrary to usual Hölder exponents, the weak scaling exponent and the 2-microlocal formalism allow to describe the multifractal nature of the LFSM independently of the continuity of its sample paths, unifying the continuous ($H>\tfrac{1}{\alpha}$) and unbounded ($H<\tfrac{1}{\alpha}$) cases (see Figure~\ref{fig:2ml_lfsms}). In the latter case, the 2-microlocal domain is located strictly below the $s'$-axis, implying that sample paths are nowhere bounded. Nevertheless, the proof of Theorem \ref{th:2ml_lfsm} ensures in this case the existence of a modification of the LFSM such that the sample paths are distributions in $\Di'(\R)$ whose 2-microlocal regularity can be studied as well.

In addition, the classic multifractal spectrum can be explicated when sample paths are Hölder continuous.
\begin{corollary}  \label{cor:spectrum_lfsm}
  Suppose $X$ is a linear fractional stable motion parametrized by $\alpha\in\ivfo{1,2}$, $\beta_\alpha\in\ivff{-1,1}$ and $H\in\ivoo{0,1}$, with $H > 1/\alpha$. Then, with probability one, the multifractal spectrum of $X$ is given by
  \begin{equation} \label{eq:spectrum_lfsm}
    \forall V\in\Oi;\quad d_X(h,V) = 
    \begin{cases}
      \alpha \pth{ h - H } + 1   & \text{ if } h\in\ivffb{H - \tfrac{1}{\alpha}, H}; \\
      -\infty   & \text{ otherwise}.
    \end{cases}
  \end{equation}
\end{corollary}
An equivalent multifractal structure is presented in Proposition~\ref{prop:2ml_flp} for a similar class of processes called \emph{fractional L\'evy processes} (see \cite{Benassi.Cohen.ea-2004,Marquardt-2006,Cohen.Lacaux.ea-2008}).\vsp

\begin{figure}[ht!]
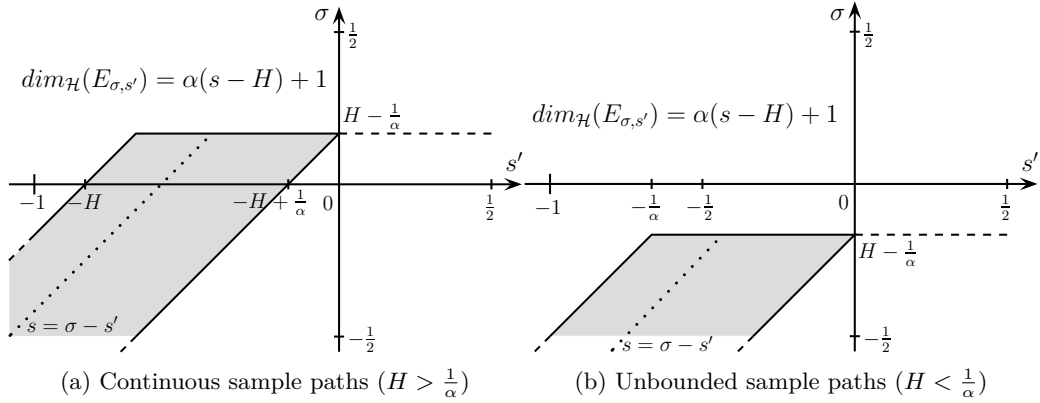

  \centering
  \begin{subfigure}[b]{0.49\textwidth}
    \centering
    \includegraphics[width=\textwidth]{2ml_lfsm1}
    \caption{Continuous sample paths ($H>\tfrac{1}{\alpha}$)}
    \label{fig:2ml_lfsm1}
  \end{subfigure}%
  \begin{subfigure}[b]{0.49\textwidth}
    \centering
    \includegraphics[width=\textwidth]{2ml_lfsm2}
    \caption{Unbounded sample paths ($H<\tfrac{1}{\alpha}$)}
    \label{fig:2ml_lfsm2}
  \end{subfigure}
  \caption{Domains of admissible 2-microlocal frontiers for the LFSM}
  \label{fig:2ml_lfsms}
\end{figure}

The LFSM admits a natural multifractional extension which has been introduced and studied in \cite{Stoev.Taqqu-2004,Stoev.Taqqu-2005,Dozzi.Shevchenko-2011}. The definition of the linear multifractional stable motion (LMSM) is based on Equation~\eqref{eq:def_lfsm}, where the Hurst exponent $H$ is replaced by a function $t\mapsto H(t)$.
\citet{Stoev.Taqqu-2004} and \citet{Ayache.Hamonier-2013} have obtained lower and upper bounds on H\"older exponents which are similar to LFSM results: for all $t\in\R_+$, $H(t)-1/\alpha \leq \alpha_{X,t} \leq H(t)$ and $\widetilde{\alpha}_{X,t} = H(t)-1/\alpha$ almost surely. \citet{Ayache.Hamonier-2013} have also investigated the existence of an optimal local modulus of continuity.

Theorem~\ref{th:2ml_lfsm} can be generalized to the LMSM in the continuous case. More precisely, we assume that the Hurst function satisfies the following assumption,
\begin{equation}  
  H:\R\rightarrow\ivoob{\tfrac{1}{\alpha},1} \text{ is $\delta$-H\"olderian, with }\delta > \sup_{u\in\R} H(u).
\tag{$\Hi_0$}
\end{equation}
Since the LMSM is clearly a non-homogeneous process, it is natural to focus on the study of the spectrum of singularities localized at $t\in\R_+$, i.e. 
\[
   \forall t\in\R_+ \quad d_X(h,t) = \lim_{\rho\rightarrow 0}  d_X(h,B(t,\rho)) = \lim_{\rho\rightarrow 0} \dimH\pth{ E_h \cap B(t,\rho) }.
\]
\begin{theorem}  \label{th:2ml_lmsm}
  Suppose $X$ is a linear multifractional stable motion parametrized by $\alpha\in\ivoo{1,2}$, $\beta_\alpha\in\ivff{-1,1}$ and an $(\Hi_0)$-Hurst function $H$. 

  Then, with probability one, for all $t\in\R$ and for all $\sigma< H(t)-\tfrac{1}{\alpha}$,
  \begin{equation} \label{eq:2ml_spectrum_lmsm}
    \lim_{\rho\rightarrow 0}\ \dimH\pthb{ E_{\sigma,s'} \cap B(t,\rho) } =
    \begin{cases}
      \alpha \pthb{ s - H(t) } + 1   & \text{ if } s\in\ivffb{H(t)-\tfrac{1}{\alpha},H(t)}; \\
      -\infty   & \text{ otherwise.}
   \end{cases}
  \end{equation}
  where $s = \sigma - s'$. Furthermore, the set $E_{\sigma,s'}\cap B(t,\rho)$ is empty for any $\sigma > H(t)-\tfrac{1}{\alpha}$ and $\rho>0$ sufficiently small. 
\end{theorem}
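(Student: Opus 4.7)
The plan is to reduce the multifractional case to the fractional case of Theorem \ref{th:2ml_lfsm} via a localization argument familiar from the multifractional Brownian and stable literature. Fix $t_0 \in \R$ and introduce the \emph{tangent LFSM} $Y^{(t_0)}$ built from the same $\alpha$-stable random measure $M_{\alpha,\beta}$ but with constant Hurst exponent $H(t_0)$. Write $X_t = Y^{(t_0)}_t + Z^{(t_0)}_t$, so that $Z^{(t_0)}$ is the stable integral with kernel
\[
  K^{(t_0)}(t,u) = \brcb{(t-u)_+^{H(t)-1/\alpha} - (-u)_+^{H(t)-1/\alpha}} - \brcb{(t-u)_+^{H(t_0)-1/\alpha} - (-u)_+^{H(t_0)-1/\alpha}},
\]
which vanishes as $t\to t_0$. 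The whole proof rests on showing that $Z^{(t_0)}$ has strictly better 2-microlocal regularity at $t_0$ than $Y^{(t_0)}$, so that the frontier of $X$ near $t_0$ coincides with that of $Y^{(t_0)}$.

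The first step is to establish that, almost surely, $Z^{(t_0)}$ is $\delta'$-H\"older continuous at $t_0$ for every $\delta' \in \ivoob{\sup_t H(t),\delta}$. Using the representation $(t-u)_+^{H(t)-1/\alpha} - (t-u)_+^{H(t_0)-1/\alpha} = (H(t)-H(t_0)) \int_0^1 (t-u)_+^{H(t_0)+\tau(H(t)-H(t_0))-1/\alpha}\log(t-u)_+ \,\dt\tau$, the $\delta$-H\"older regularity of $H$ and a standard $L^p$-moment bound ($p < \alpha$) for stable integrals yield
\[
  \espb{ \absb{Z^{(t_0)}_t - Z^{(t_0)}_{t_0}}^p } \lesssim \abs{t-t_0}^{p\delta'}.
\]
A Kolmogorov-type argument, carried out uniformly on a countable dense grid of $t_0$ and then extended to all $t_0$ by continuity of $H$, gives the desired H\"older estimate, and hence that the 2-microlocal frontier of $Z^{(t_0)}$ at $t_0$ dominates $s'\mapsto (\delta'+s')\wedge 0$.

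The second step applies Theorem \ref{th:2ml_lfsm} to $Y^{(t_0)}$: almost surely, for every open $V$, the level sets $E^{Y^{(t_0)}}_{\sigma,s'}\cap V$ have the announced dimension, and are empty for $\sigma > H(t_0)-1/\alpha$. Combining with Step 1 through the standard additivity $f\in C^{\sigma_1,s'}_t$, $g\in C^{\sigma_2,s'}_t \Rightarrow f+g \in C^{\sigma_1\wedge\sigma_2,s'}_t$, and using that the frontier of $Z^{(t_0)}$ stays strictly above the relevant range $\sigma\leq H(t_0)-1/\alpha < \delta'$, we deduce that in a neighborhood of $t_0$ the 2-microlocal frontier of $X$ is the same as that of $Y^{(t_0)}$. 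This transfers both the dimension formula and the emptiness statement of Theorem \ref{th:2ml_lfsm} to $X$ locally at $t_0$, yielding \eqref{eq:2ml_spectrum_lmsm}. The passage from a fixed $t_0$ to an almost sure statement valid for every $t\in\R$ simultaneously is handled by exhausting $\R$ with a countable family $(t_n)$ dense in $\R$, applying the fixed-$t_0$ result for each $t_n$ (on the full-measure event of intersection), and then using the continuity of $H$ together with the monotonicity of $\rho\mapsto \dim_\Hi(E_{\sigma,s'}\cap B(t,\rho))$ to interpolate between grid points.

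The main obstacle is Step 1: obtaining the moment bound on $Z^{(t_0)}$ in full rigor, uniformly enough in $t_0$ to upgrade it to an almost sure statement for all $t$. Two features complicate this. First, the logarithmic factor $\log(t-u)_+$ creates a mild divergence at $u=t$ which must be absorbed by the exponent $\delta' < \delta$; this is precisely where the assumption $\delta > \sup_t H(t)$ is used, since otherwise the exponent $H(t_0)+\tau(H(t)-H(t_0))-1/\alpha$ could fail to produce an integrable kernel in the $\alpha$-stable sense after raising to power $\alpha$. Second, the stable integrability demands a careful splitting of the integration domain into the regions $\abs{u-t}\leq 1$, $\abs{u}\leq 1$, and the tails, each of which contributes differently to the resulting H\"older modulus. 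Once this estimate is secured, the rest of the argument reduces, step by step, to Theorem \ref{th:2ml_lfsm} and the algebraic properties of the 2-microlocal frontier.
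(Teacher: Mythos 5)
Your overall strategy --- freeze the Hurst exponent at $t_0$, compare $X$ to the tangent LFSM $Y^{(t_0)}$, and show that the remainder $Z^{(t_0)}=X-Y^{(t_0)}$ is too regular to affect the frontier --- is the same reduction to Theorem \ref{th:2ml_lfsm} that the paper performs (your $Z^{(t_0)}_u$ is exactly the paper's $X(u,H(u))-X(u,H(t_0))$). The gap is in how you control the remainder. Your Step 1 only yields a \emph{one-point} estimate $\absb{Z^{(t_0)}_t - Z^{(t_0)}_{t_0}} \lesssim \abs{t-t_0}^{\delta'}$, i.e.\ pointwise H\"older regularity of $Z^{(t_0)}$ at the single centre $t_0$. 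This is insufficient in two distinct ways. First, it only gives $\sigma_{Z^{(t_0)},t_0}(s') \geq (\delta'+s')\wedge 0$, whose value for $s'$ near $0$ is $0$, whereas under $(\Hi_0)$ the LFSM frontier saturates there at the strictly positive level $H(t_0)-\tfrac{1}{\alpha}$; so on the plateau $s'\geq -\alpha_{L,t_0}$ your lower bound for $\sigma_{Z^{(t_0)},t_0}$ sits \emph{below} $\sigma_{Y^{(t_0)},t_0}$ and you cannot conclude $\sigma_{X,t_0}=\sigma_{Y^{(t_0)},t_0}$ there. Second, and more seriously, the quantity $\dim_\Hi\pthb{E_{\sigma,s'}\cap B(t,\rho)}$ involves the frontier of $X$ at \emph{every} point $u$ of the ball, so the comparison with $Y^{(t_0)}$ must hold at every such $u$; regularity of $Z^{(t_0)}$ at the centre alone says nothing about $\sigma_{X,u}$ for $u\neq t_0$, and your grid-plus-continuity interpolation cannot recover this, because the sets $E_{\sigma,s'}$ are defined pointwise and are not determined by the values of the frontier on a countable dense set of centres.

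What is needed, and what the paper's proof supplies, is a genuinely \emph{two-point} increment bound valid uniformly on the ball: writing $X_u-X_v$ as the frozen-index increment $X(u,H(t))-X(v,H(t))$ plus error terms, the representation of Proposition \ref{prop:rep_lfsm} makes the field $H\mapsto X(\cdot,H)$ pathwise differentiable, and the mean value theorem gives $\absb{X(v,H(u))-X(v,H(v))}\leq C\abs{H(u)-H(v)}$ together with $\absb{\int_{H(t)}^{H(u)}\pthb{\partial_H X(u,h)-\partial_H X(v,h)}\,\dt h}\leq C\abs{H(u)-H(t)}\cdot\abs{u-v}^{h}$ for all $u,v\in B(t,\rho)$ and $1/\alpha<h<\min_{B(t,\rho)}H$. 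These product-form bounds are exactly what Definition \ref{def:2ml_spaces} requires and, under $(\Hi_0)$, they dominate the frozen LFSM increments at every point of the ball, including on the plateau. If you want to keep the moment-based route you would have to prove an analogous two-point $L^p$ bound on $Z^{(t_0)}_u-Z^{(t_0)}_v$ with the correct mixed dependence on $\abs{u-v}$ and $\abs{u-t_0}+\abs{v-t_0}$, and then run a chaining argument strong enough to hold simultaneously for all centres; as written, the proposal does not do this.
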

Theorem~\ref{th:2ml_lmsm} extends the results presented in \cite{Stoev.Taqqu-2004,Stoev.Taqqu-2005}, and also ensures that the localized multifractal spectrum is equal to
\begin{equation} \label{eq:spectrum_lmsm}
  \forall t\in\R_+;\quad d_X(h,t)=
  \begin{cases}
    \alpha \pthb{ h - H(t) } + 1   & \text{ if } h\in\ivffb{H(t) - \tfrac{1}{\alpha}, H(t)}; \\
    -\infty   & \text{ otherwise}.
  \end{cases}
\end{equation}
Moreover, we observe that Proposition~\ref{prop:rep_lfsm} and Theorem~ \ref{th:2ml_lmsm} still hold when the Hurst function $H(\cdot)$ is a continuous random process. Thereby, similarly to the works of \citet{Barral.Fournier.ea-2010} and \citet{Durand-2008}, it provides a class stochastic processes whose spectrum of singularities, given by Equation~\eqref{eq:spectrum_lmsm}, is non-homogeneous and random.


\section{L\'evy processes}  \label{sec:2ml_levy}

In this section, $X$ will designate a L\'evy process parametrized by the generating triplet $(a,Q,\pi)$. The L\'evy-It\={o} decomposition states that it can represented as the sum of three independent processes $B$, $N$ and $Y$, where $B$ is a $d$-dimensional Brownian motion, $N$ is a compound Poisson process with drift and $Y$ is a L\'evy process characterized by $\pthb{0,0,\pi(\dt x) \indi_{\brc{ \norm{x}\leq 1 }}}$.

Without any loss of generality, we restrict the study to the time interval $\ivff{0,1}$. Furthermore, as outlined in the introduction, we also assume that the Blumenthal--Getoor $\beta$ is strictly positive. As noted by \citet{Jaffard-1999}, the component $N$ does not affect the regularity of $X$ since its trajectories are piecewise linear with a finite number of jumps. Sample path properties of Brownian motion are well-known and therefore, we first focus in this section on the study of the jump process~$Y$. 

It is well-known that the process $Y$ can be represented as a compensated integral with respect to a Poisson measure $J(\dt t,\dt x)$ of intensity $\mathcal{L}^1\otimes\pi$:
\begin{equation}
  Y_t = \lim_{\eps\rightarrow 0} \bktbb{ \int_{ \ivff{0,t}\times D(\eps,1) } x \,J(\dt s,\dt x) - t\int_{D(\eps,1)} x \,\pi(\dt x) },
\end{equation}
where for all $0\leq a<b$, $D(a,b)\eqdef\brc{x\in\R^d : a<\norm{x}\leq b}$. Moreover, as presented in \cite[Th. 19.2]{Sato-1999}, the convergence is almost surely uniform on any bounded interval.
In the rest of this section, for any $m\in\R_+$, $Y^m$ will denote the L\'evy process:
\begin{equation} \label{eq:notation_levy}
  Y^m_t = \lim_{\eps\rightarrow 0} \bktbb{ \int_{ \ivff{0,t}\times D(\eps,2^{-m}) } x \,J(\dt s,\dt x) - t\int_{D(\eps,2^{-m})} x \,\pi(\dt x) }.
\end{equation}

Finally, in the following proofs, $c$ and $C$ will denote positive constants which can change from a line to another. More specific constants will be written $c_1$, $c_2$, \ldots Furthermore, we will write $u_n\asymp v_n$ when there exists two constants $c_1$, $c_2$ independent of $n$ such that $c_1\, v_n\leq u_n\leq c_2\, v_n$ for every $n\in\N$.

\subsection{Pointwise exponent} \label{ssec:exp_levy}

We extend in this section the multifractal spectrum \eqref{eq:spectrum_levy} to any L\'evy process.
To begin with, we prove two technical lemmas that will be extensively used in the rest of the article.
\begin{lemma} \label{lemma:tech_lemma1}
  For any $\delta > \beta$, there exists a positive constant $c(\delta)$ such that for all $m\in\R_+$
  \[
    \prbb{ \sup_{t\leq 2^{-m}} \normb{ Y^{m/\delta}_t }_1 \geq m 2^{-m/\delta} } \leq c(\delta) e^{-m}.
  \]
\end{lemma}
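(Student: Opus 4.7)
The plan is to apply a Bernstein-type exponential inequality for compensated Poisson integrals, combined with Doob's maximal inequality. Since $Y^{m/\delta}$ has jumps of size at most $R = 2^{-m/\delta}$, it admits an exponential martingale, and the control on $\int_{D(0,R)} \|x\|^2 \,\pi(\dt x)$ provided by $\delta > \beta$ is precisely what is needed to make the estimate sharp.

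First I would reduce the problem to a one-dimensional statement. Since $\norm{Y^{m/\delta}_t}_1 = \sum_{i=1}^d \abs{Y^{m/\delta,(i)}_t}$, a union bound over the $d$ coordinates and over signs $\pm$ reduces the claim to showing that each compensated real-valued process $Z_t = \pm\, Y^{m/\delta,(i)}_t$ satisfies
\[
  \prb{ \sup_{t\leq 2^{-m}} Z_t \geq \tfrac{m}{2d} 2^{-m/\delta} } \leq C'_\delta e^{-m}.
\]

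Next I would introduce, for $\lambda>0$, the exponential martingale $M_t(\lambda) = \exp\pthb{\lambda Z_t - t\varphi(\lambda)}$, where the truncated Laplace exponent is $\varphi(\lambda) = \int_{D(0,2^{-m/\delta})} (e^{\lambda x_i}-1-\lambda x_i)\,\pi(\dt x)$. This is a genuine positive martingale because the jumps are bounded. Doob's maximal inequality then yields
\[
  \prb{ \sup_{t\leq 2^{-m}} Z_t \geq a } \leq \exp\pthb{ -\lambda a + 2^{-m}\varphi(\lambda) }.
\]
The bound $\abs{e^u-1-u}\leq C u^2$ valid for $\abs{u}$ below any fixed constant, applied on the support $D(0,2^{-m/\delta})$ under the condition $\lambda\cdot 2^{-m/\delta} = \Oi(1)$, gives
\[
  \varphi(\lambda) \leq C\lambda^2 \int_{D(0,2^{-m/\delta})} \norm{x}^2\,\pi(\dt x).
\]
Since $\beta<\delta\leq 2$, I would then use the elementary comparison $\norm{x}^2 \leq \norm{x}^\delta (2^{-m/\delta})^{2-\delta}$ on $D(0,2^{-m/\delta})$ together with the integrability $\int (1\wedge\norm{x}^\delta)\,\pi(\dt x)<\infty$ (which follows from $\delta>\beta$) to conclude $\varphi(\lambda)\leq C_\delta\, \lambda^2\, 2^{-m(2-\delta)/\delta}$.

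The final step is the optimisation. Setting $\lambda = c_0 \cdot 2^{m/\delta}$ with $c_0$ a sufficiently large constant (chosen after $d$), one obtains $\lambda a = \tfrac{c_0}{2d}\,m$ and $2^{-m}\varphi(\lambda) \leq C_\delta\, c_0^2$, since the powers of $2$ collapse exactly:
\[
  2^{-m}\cdot 2^{2m/\delta}\cdot 2^{-m(2-\delta)/\delta} = 1.
\]
This choice produces an exponent of the form $-\tfrac{c_0}{2d} m + C_\delta c_0^2$, and taking $c_0 \geq 4d$ gives the desired $e^{-m}$ decay, the remaining combinatorial factors $2d$ (signs and coordinates) and $e^{C_\delta c_0^2}$ being absorbed into the final constant $C_\delta$. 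The only real subtlety is making the scales match so that the quadratic-in-$\lambda$ term from $\varphi(\lambda)$ stays bounded while $\lambda a$ grows linearly in $m$; this is exactly what the scaling $\lambda\sim 2^{m/\delta}$ with $\delta>\beta$ delivers.
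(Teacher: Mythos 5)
Your argument is correct for $\delta\leq 2$ and follows essentially the same route as the paper: reduce to a scalar statement by a union bound, apply Doob's maximal inequality to the exponential (sub)martingale with $\lambda\asymp 2^{m/\delta}$, bound $e^{u}-1-u$ by $Cu^2$ for bounded $u$, and interpolate $\norm{x}^2$ against a lower power of $\norm{x}$ on $D(0,2^{-m/\delta})$; the cosmetic differences (coordinates and signs instead of vectors $\eps\in\brc{-1,1}^d$, a genuine martingale instead of a submartingale, the exponent $\delta$ itself instead of an intermediate $\gamma\in(\beta,2)$ with $\gamma<\delta$) do not change the substance. The one point you must patch is the case $\delta>2$, which the statement allows and which is forced when $\beta=2$: there your comparison $\norm{x}^2\leq\norm{x}^\delta\,(2^{-m/\delta})^{2-\delta}$ goes the wrong way, since $2-\delta<0$ gives $\norm{x}^{2-\delta}\geq(2^{-m/\delta})^{2-\delta}$ on $D(0,2^{-m/\delta})$. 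The fix is immediate and is exactly the paper's treatment of $\beta=2$: bound $\int_{D(0,2^{-m/\delta})}\norm{x}^2\,\pi(\dt x)$ by the finite constant $\int_{D(0,1)}\norm{x}^2\,\pi(\dt x)$ (finite for any L\'evy measure) and observe that $2^{-m}\lambda^2=c_0^2\,2^{-m(1-2/\delta)}\leq c_0^2$ because $\delta>2$.
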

\begin{proof}
  Let $\delta>\beta$. We observe that for any $m\in\R_+$,
  \[
    \brcbb{ \sup_{t\leq 2^{-m}} \normb{ Y^{m/\delta}_t }_1 \geq m 2^{-m/\delta} } = \bigcup_{\eps\in\brc{-1,1}^d} \brcbb{ \sup_{t\leq 2^{-m}} \scprb{\eps}{ Y^{m/\delta}_t } \geq m 2^{-m/\delta} }
  \]
  Hence, it is sufficient to prove that there exists $c(\delta)>0$ such that for any $\eps\in\brc{-1,1}^d$,
  \[
    \prbb{ \sup_{t\leq 2^{-m}} \scprb{\eps}{ Y^{m/\delta}_t } \geq m 2^{-m/\delta} } \leq  c(\delta) e^{-m}.
  \]
  Let $\lambda = 2^{m/\delta}$ and $M_t = e^{ \lambda\scpr{\eps}{Y^{m/\delta}_t} }$ for all $t\in\R_+$. According to Theorem $25.17$ in \cite{Sato-1999}, we have $\esp{M_t} = \exp\brcb{ t \int_{D(0,2^{-m/\delta})} \pthb{ e^{\lambda\scpr{\eps}{x}} -1 - \lambda\scpr{\eps}{x} } \pi(\dt x) }$. 
  Furthermore, we observe that for all $s\leq t\in\R_+$,
  \[
    \espc{M_t}{\Fi_s} = M_s \exp\brcbb{ (t-s) \int_{D(0,2^{-m/\delta})} \pthb{ e^{\lambda\scpr{\eps}{x}} -1 - \lambda\scpr{\eps}{x} } \pi(\dt x) }  \geq M_s,
  \]
  since for any $y\in\R$, $e^{y} -1 - y \geq 0$.
  Hence, $M$ is a positive submartingale, and using Doob's inequality (Theorem 1.7 in \cite{Revuz.Yor-1999}), we obtain
  \begin{align*}
    \prbb{ \sup_{t\leq 2^{-m}} \scprb{\eps}{ Y^{m/\delta}_t } \geq m 2^{-m/\delta} }
    &= \prbb{ \sup_{t\leq 2^{-m}} M_t \geq e^{m} } 
    \leq e^{-m} \esp{ M_{2^{-m}} }.
  \end{align*}
  For all $y\in\ivff{-1,1}$, we note that $e^{y} -1 - y \leq y^2$. Thus, for any $m\in\R_+$,
  \begin{align*}
    \esp{ M_{2^{-m}} } 
    &\leq \exp\brcbb{ 2^{-m} \int_{D(0,2^{-m/\delta})} \lambda^2 \scpr{\eps}{x}^2 \pi(\dt x) } 
    \leq \exp\brcbb{ 2^{-m} \int_{D(0,2^{-m/\delta})} \lambda^2 \norm{x}^2 \pi(\dt x) }.
  \end{align*}
  If $\beta < 2$, let us set $\gamma>0$ such that $\beta < \gamma < 2$ and $\gamma < \delta$. Then, 
  \begin{align*}
    2^{-m} \int_{D(0,2^{-m/\delta})} \lambda^2 \norm{x}^2 \pi(\dt x) 
    &= 2^{-m(1-2/\delta)} \int_{D(0,2^{-m/\delta})} \norm{x}^{\gamma} \cdot \norm{x}^{2-\gamma} \pi(\dt x) \\
    &\leq 2^{-m(1-2/\delta )} 2^{-m/\delta(2-\gamma)} \int_{D(0,1)} \norm{x}^{\gamma} \pi(\dt x) \\
    &= 2^{-m(1 - \gamma/\delta)} \int_{D(0,1)} \norm{x}^{\gamma} \pi(\dt x) 
    \leq \int_{D(0,1)} \norm{x}^{\gamma} \pi(\dt x),
  \end{align*}
  since $\gamma < \delta$. If $\beta=2$, we simply observe that 
  \begin{align*}
    2^{-m} \int_{D(0,2^{-m/\delta})} \lambda^2 \norm{x}^2 \pi(\dt x) 
    \leq 2^{-m(1-2/\delta)} \int_{D(0,1)} \norm{x}^2 \pi(\dt x) \leq \int_{D(0,1)} \norm{x}^2 \pi(\dt x),
  \end{align*}
  as $\delta > 2$. Therefore, there exists $c(\delta) > 0$ such that for all $m\in\R_+$, $\esp{ M_{2^{-m}} } \leq c(\delta)$, concluding the proof of this lemma.
\end{proof}

\begin{lemma} \label{lemma:tech_lemma2}
  Suppose $\delta > \beta$. Then, with probability one, there exist $c_1>0$ and $M(\omega)>0$ such that
  \begin{equation}  \label{eq:tech_lemma2}
    \forall u,v\in\ivff{0,1} : \abs{u-v}\leq 2^{-m};\quad \normb{ Y^{m/\delta}_u - Y^{m/\delta}_v }\leq c_1 \,m 2^{-m/\delta}
  \end{equation}
  for any $m\geq M(\omega)$.
\end{lemma}
\begin{proof}
  We first note that for any $m\in\R_+$ and any $\delta>\beta$,
  \begin{align*}
    &\brcbb{ \sup_{u,v\in\ivff{0,1} : \abs{u-v}\leq 2^{-m}} \normb{ Y^{m/\delta}_u - Y^{m/\delta}_v } \geq 3m 2^{-m/\delta} } \\
    &\subseteq \bigcup_{k=0}^{2^m-1} \brcbb{ \sup_{t\leq 2^{-m}} \normb{ Y^{m/\delta}_{t+k2^{-m}} - Y^{m/\delta}_{k2^{-m}} } \geq m 2^{-m/\delta} }.
  \end{align*}
  Therefore, the stationarity of L\'evy processes and Lemma~\ref{lemma:tech_lemma1} yield
  \[
    \prbb{ \sup_{u,v\in\ivff{0,1} : \abs{u-v}\leq 2^{-m}} \normb{ Y^{m/\delta}_u - Y^{m/\delta}_v } \geq 3m 2^{-m/\delta} } \leq 2^m c(\delta) e^{-m} = c(\delta) e^{-cm}.
  \]
  Using the latter estimate and Borel--Cantelli lemma, we obtain Equation~\eqref{eq:tech_lemma2}.
\end{proof}

Let us recall the definition of the collection of random sets $(A_\delta)_{\delta > 0}$ introduced by \citet{Jaffard-1999}. For every $\omega\in\Omega$, $S(\omega)$ denotes the countable set of jumps of $Y_\sbullet(\omega)$. Moreover, for any $\eps>0$, let $A^\eps_\delta$ be
\[
  A^\eps_\delta = \bigcup_{ \stackrel{t\in S(\omega)}{\norm{\Delta Y_t}\leq\eps} } \ivffb{ t - \norm{\Delta Y_t}^\delta, t + \norm{\Delta Y_t}^\delta }.
\]
Then, the random set $A_\delta$ is defined by $A_\delta = \limsup_{\eps\rightarrow 0^+} A^\eps_\delta$. As noted in \cite{Jaffard-1999}, if $t\in A_\delta$, we necessarily have $\alpha_{Y,t} \leq \tfrac{1}{\delta}$. The other side inequality is obtained in the next statement which extends Proposition 2 from \cite{Jaffard-1999}.
\begin{proposition} \label{prop:pointwise_levy}
  Suppose $\delta > \beta$. Then, with probability one, for all $t\in\ivff{0,1}\setminus S(\omega)$:
  \[
    t\notin A_\delta \quad\Longrightarrow\quad \alpha_{Y,t} \geq \tfrac{1}{\delta}.
  \]
\end{proposition}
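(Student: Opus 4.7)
My strategy is to split $Y=Y^{m/\delta}+W^{m/\delta}$ at a scale $m$ tied to the target radius $2^{-m}$, control the compensated small-jump component $Y^{m/\delta}$ uniformly in $m$ via Lemma \ref{lemma:tech_lemma2}, and exploit the assumption $t\notin A_\delta$ to ensure that $W^{m/\delta}$ carries no jumps inside $B(t,2^{-m})$ and therefore reduces to an affine function there.

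First, I would apply Borel--Cantelli to Lemma \ref{lemma:tech_lemma2}---the bound $C_\delta e^{-Dm}$ being summable in $m$---to obtain an event $\Omega_0$ of full probability on which there exists a random $m_1(\omega)$ such that, for every $m\geq m_1$,
\[
  \sup_{u,v\in\ivff{0,1},\ \abs{u-v}\leq 2^{-m}} \normb{ Y^{m/\delta}_u - Y^{m/\delta}_v } \leq 3m\,2^{-m/\delta}.
\]
Then I would fix $\omega\in\Omega_0$ and $t\in\ivff{0,1}\setminus(A_\delta\cup S(\omega))$. Since $A_\delta=\bigcap_{\eps>0}A^\eps_\delta$, I pick $\eps_t>0$ with $t\notin A^{\eps_t}_\delta$, which means every jump time $s$ with $0<\norm{\Delta Y_s}\leq\eps_t$ satisfies $\abs{t-s}>\norm{\Delta Y_s}^\delta$. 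Because the jumps in $\ivff{0,1}$ of size lying in $\ivof{\eps_t,1}$ are almost surely finite in number, and $t$ is not a jump, their distance $d_t$ to $t$ is strictly positive.

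Writing $W^{m/\delta}_r=\sum_{s\leq r,\,\norm{\Delta Y_s}>2^{-m/\delta}}\Delta Y_s - r\,b_m$ with $b_m=\int_{D(2^{-m/\delta},1)}x\,\pi(\dt x)$, I observe that as soon as $2^{-m/\delta}\leq\eps_t$ and $2^{-m}<d_t$, no jump of size larger than $2^{-m/\delta}$ meets $B(t,2^{-m})$: small-to-moderate jumps are excluded through $\abs{t-s}>\norm{\Delta Y_s}^\delta>2^{-m}$ and the remaining ones through $\abs{t-s}\geq d_t$. Hence $W^{m/\delta}_u-W^{m/\delta}_t=-(u-t)\,b_m$ for every $u\in B(t,2^{-m})$. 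Controlling this drift is where I expect the most care to be needed: for $\delta>1$, the pointwise inequality $\norm{x}\leq\norm{x}^\delta(2^{-m/\delta})^{1-\delta}$ on $D(2^{-m/\delta},1)$ gives $\norm{b_m}\leq K_\delta\,2^{m(\delta-1)/\delta}$, so $\abs{u-t}\cdot\norm{b_m}\leq K_\delta\,2^{-m/\delta}$ and I absorb the drift with the constant polynomial $P_t(u)=Y_t$; for $\delta\leq 1$, which forces $\beta<1$ and hence that $Y$ has finite variation, I instead shift $P_t$ by the drift $c_\infty(u-t)$ with $c_\infty=-\int_{D(0,1)}x\,\pi(\dt x)$, reducing the residual drift to $\int_{D(0,2^{-m/\delta})}x\,\pi(\dt x)$, which is $O(2^{-m(1-\delta)/\delta})$ by the analogous inequality and again contributes $O(2^{-m/\delta})$ once multiplied by $\abs{u-t}\leq 2^{-m}$.

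Combining the two contributions, for $m\geq m_2(t,\omega)$ and every $u\in B(t,2^{-m})$ I obtain $\norm{Y_u-P_t(u)}\leq(3m+K_\delta)\,2^{-m/\delta}$. Selecting $m$ so that $2^{-m-1}<\abs{u-t}\leq 2^{-m}$ converts this into $\norm{Y_u-P_t(u)}\leq C_\delta(1+\abs{\log\abs{u-t}})\abs{u-t}^{1/\delta}$; for every $\alpha<1/\delta$ the logarithmic factor is absorbed by $\abs{u-t}^{1/\delta-\alpha}\to 0$, so $Y\in C^\alpha_t$ and therefore $\alpha_{Y,t}\geq 1/\delta$. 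The most delicate step will be the drift estimate in the regime $\beta\geq 1$, where $\int\norm{x}\pi(\dt x)$ diverges near the origin, so that $\norm{b_m}$ cannot be bounded by any $L^1$-mass of $\pi$ and one must trade powers of $\norm{x}$ against the cutoff $2^{-m/\delta}$.
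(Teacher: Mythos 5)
Your proposal is correct and follows essentially the same route as the paper's proof: the same decomposition of $Y$ at the cutoff scale $2^{-m/\delta}$, the same application of Borel--Cantelli to Lemma \ref{lemma:tech_lemma2} for the compensated small-jump part, the same use of $t\notin A_\delta$ (together with the finiteness of large jumps and $t\notin S(\omega)$) to exclude all jumps of size exceeding $2^{-m/\delta}$ from $B(t,2^{-m})$, and the same drift estimate obtained by trading $\norm{x}$ against $\norm{x}^\delta$ times a power of the cutoff, with the identical case distinction around $\delta=1$ (constant polynomial versus subtraction of the full compensating drift). The only differences are cosmetic, e.g.\ making explicit the positive distance $d_t$ to the finitely many large jumps, which the paper leaves implicit.
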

\begin{proof}
  Suppose $\omega\in\Omega$, $t\notin A_\delta$, $u\in\ivff{0,1}$ and $m\in\N$ such that $2^{-(m+1)\delta} \leq \abs{t-u} < 2^{-m\delta}$. Since $t\notin A_\delta$, there exists $\eps_0>0$ such that for all $\eps\leq \eps_0$, $t\notin A^\eps_\delta$. The component
  \[
    \int_{ \ivff{t,u}\times D(\eps_0,1) } x \,J(\dt s,\dt x) - (u-t)\int_{D(\eps_0,1)} x \,\pi(\dt x)
  \]
  is piecewise linear, and therefore does not influence the pointwise exponent $\alpha_{Y,t}$.
  Without any loss of generality, we may assume that $2^{-m}\leq\eps_0$. Then, for any jump $\Delta Y_s$ such that $\norm{\Delta Y_s}\in\ivff{2^{-m},\eps_0}$, we have $\norm{\Delta Y_s}^\delta \geq 2^{-m\delta}\geq\abs{t-u}$, implying that
  \[
    \int_{ \ivff{t,u}\times D(2^{-m},\eps_0) } x \,J(\dt s,\dt x) = 0.
  \]
  Furthermore, using Lemma~\ref{lemma:tech_lemma2}, we obtain
  \[
    \normb{ Y^{m}_u - Y^{m}_t }\leq c \,m 2^{-m} \leq c \log\pthb{ \abs{t-u}^{-1} } \abs{t-u}^{1/\delta},
  \]
  assuming that $\abs{t-u}$ is sufficiently small. Therefore, the remaining term to estimate corresponds to $-(u-t)\int_{D(2^{-m},\eps_0)} x \,\pi(\dt x)$. To study the latter, we distinguish two different cases, depending on the polynomial component we subtract in Definition~\ref{def:pointwise_holder}.
  \begin{enumerate}[ \it 1.]
    \item If $\delta\geq 1$, let us set $P_t \equiv 0$. Then,
    \begin{align*}
      \normbb{ (u-t)\int_{D(2^{-m},\eps_0)} x \,\pi(\dt x) }
      &\leq c\,\abs{t-u} \int_{D(2^{-m},\eps_0)} \norm{x}^\delta \cdot \norm{x}^{1-\delta} \,\pi(\dt x) \\
      &\leq c\,\abs{t-u}\cdot 2^{-m(1-\delta)} \int_{D(2^{-m},\eps_0)} \norm{x}^\delta \,\pi(\dt x) 
      \leq c\,\abs{t-u}^{1/\delta}.
    \end{align*}
    
    \item If $\delta < 1$ (and thus $\beta < 1$), we set $P_t(u) \equiv -(u-t)\int_{D(0,\eps_0)} \,\pi(\dt x)$, which corresponds to the linear drift of the Lévy process. We observe that $-(u-t)\int_{D(2^{-m},\eps_0)} x \,\pi(\dt x) - P_t(u) = (u-t)\int_{D(0,2^{-m})} x \,\pi(\dt x)$. Then, similarly to the previous case, the latter satisfies
    \begin{align*}
      \normbb{ (u-t)\int_{D(0,2^{-m})} x \,\pi(\dt x) }
      &\leq c\,\abs{t-u} \int_{D(0,2^{-m})} \norm{x}^\delta \cdot \norm{x}^{1-\delta} \,\pi(\dt x) \\
      &\leq c\,\abs{t-u}\cdot 2^{-m(1-\delta)} \int_{D(0,2^{-m})} \norm{x}^\delta \,\pi(\dt x) 
      \leq c\,\abs{t-u}^{1/\delta}.
    \end{align*}
  \end{enumerate}
  Therefore, owing to the previous estimates, we have proved that $\norm{Y_u - Y_t - P_t(u)} \leq c_0 \log\pthb{ \abs{t-u}^{-1} } \abs{t-u}^{1/\delta}$, where the constant $c_0$ is independent of $u$. The latter inequality and Definition~\ref{def:pointwise_holder} prove that $\alpha_{Y,t} \geq \tfrac{1}{\delta}$.
\end{proof}

Proposition \ref{prop:pointwise_levy} ensures that almost surely
\begin{equation} \label{eq:charac_Eh}
  \forall h>0; \quad E_h = \pthbb{ \bigcap_{\delta < 1/h} A_\delta } \setminus \pthbb{ \bigcup_{\delta > 1/h} A_\delta } \setminus S\quad\text{ and }\quad E_0 = \pthbb{ \bigcap_{\delta > 0} A_\delta } \cup S.
\end{equation}
Furthermore, since the estimate of the Hausdorff dimension obtained in \cite{Jaffard-1999} does not rely on Assumption \eqref{eq:hyp_jaffard}, the Lévy process $Y$ satisfies with probability one
\[
  \forall V\in\Oi;\quad \dimH (E_h\cap V) = 
  \begin{cases}
    \beta h \quad &\text{if}\quad h\in\ivff{0,1/\beta}; \\
    -\infty       &\text{otherwise.}
  \end{cases}
\]

\subsection{2-microlocal frontier of Lévy processes} \label{ssec:2ml_levy}

We now aim to refine the multifractal spectrum of Lévy processes by studying their 2-microlocal structure. Let us begin with a few basics remarks and estimates on their 2-microlocal frontier. Firstly, according to \cite[Th. 3.13]{Meyer-1998}, with probability one, for all $t\in\ivff{0,1}$ and for any $-s' < \alpha_{Y,t}$, the sample path $Y_\sbullet(\omega)$ belongs to the 2-microlocal space $C^{0,s'}_t$. Furthermore, owing to the density of the set of jumps $S(\omega)$ in $\ivff{0,1}$, necessarily $Y_\sbullet(\omega) \notin C^{\sigma,s'}_t$ for any $\sigma > 0$ and all $s'\in\R$.
Hence, since the 2-microlocal frontier is a concave function with left- and right-derivatives between $0$ and $1$, with probability one and for all $t\in\ivff{0,1}$:
\[
  \forall s'\in\R_+;\quad \sigma_{Y,t}(s') \geq (\alpha_{Y,t} + s')\wedge 0\quad\text{and}\quad \sigma_{Y,t}(s') \leq 0.
\]
Therefore, we are interested in obtaining finer estimates of the negative component of the 2-microlocal frontier of $Y$. As outlined in the introduction and Definitions~\ref{def:2ml_spaces}-\ref{def:2ml_spaces_gen}, we need to analyse the following type of increments in the neighbourhood of $t$:
\begin{equation}  \label{eq:levy_int_incr}
  \normbb{ \int_b^u (u-s)_+^{k-1} Y_s \,\dt s -P_{t,k}(u) - \int_b^v (v-s)_+^{k-1} Y_s \,\dt s + P_{t,k}(v) }
\end{equation}
where $b < t$ is fixed and $k\geq 1$. The polynomial component to be subtracted can be estimate using our work on pointwise exponent. Indeed, when $k=0$, the $P_{t,0}\equiv P_t$ where the latter has been presented in the proof of Proposition~\ref{prop:pointwise_levy},. Then, the consistency of the definition of the 2-microlocal spaces imposes that $P_{t,k-1}$ must correspond to the derivative of $P_{t,k}$. This last property shows us that the form of $P_{t,k}$ can be completely deduce from the known polynomial $P_t$.

For the sake of readability, we divide the proof of Theorem~\ref{th:2ml_levy} and its corollaries in several different technical lemmas. To begin with, we give simple estimates on the jumps of a Lévy process.
\begin{lemma} \label{lemma:levy_jumps1}
  For any $\eps>0$, there exists an increasing sequence $(m_n)_{n\in\N}$ such that with probability one, for all $t\in\ivff{0,1}$ and for every $n\geq N(\omega)$
  \[
    \exists u\in B(t,2^{-m_n\alpha});\quad \norm{\Delta Y_u} \geq 2^{-m_n} \quad\text{and}\quad J\pthb{ B(u,2^{-m_n \gamma})\times D(2^{-m_n(1+\eps)},1) } = 1,
  \]
  where $\alpha = \beta(1-2\eps)$ and $\gamma = \beta(1+4\eps)$.
\end{lemma}
\begin{proof}
  Suppose $m\in\N$, $\eps>0$, $\alpha = \beta(1-2\eps)$ and $\gamma = \beta(1+4\eps)$. Let $I$ be an interval such that $I=I_1\cup I_2\cup I_3$, where $I_1,I_2,I_3$ are three consecutive and disjoint intervals of size $2^{-m_n \gamma}$. Then, we are interested in the following event:
  \begin{align*}
    A = 
    &\brcb{ J\pthb{ I_1, D(2^{-m},1) }=0 } \cap \brcb{ J\pthb{ I_3, D(2^{-m},1) }=0 } \,\cap \\
    &\brcb{ J\pthb{ I_2, D(2^{-m},1) }=1 }\cap\brcb{ J\pthb{ I,D(2^{-m(1+\eps)},2^{-m}) }=0 },
  \end{align*}
  Since $J$ is a Poisson measure, $A$ corresponds to the intersection of independent events whose probability is equal to 
  \[
    \pr{A} = 2^{-m\gamma} \pi(D(2^{-m},1))\cdot \exp\pthb{ -3\cdot 2^{-m\gamma} \pi(D(2^{-m},1)) + 2^{-m\gamma} \pi(D(2^{-m(1+\eps)},2^{-m})) }.
  \]
  As described in \cite{Blumenthal.Getoor-1961}, $\beta$ can be defined by $\beta = \inf\brcb{ \delta \geq 0 : \limsup_{r\rightarrow 0} r^{\delta} \pi\pthb{B(r,1)} < \infty }$.
  Therefore, there exists $r_0 > 0$ such that for all $r\in\ivof{0,r_0}$, $\pi\pthb{B(r,1)} \leq r^{-\beta(1+\eps)}$. Hence, for any $m\in\N$ sufficiently large:
  \[
    \pr{A} \geq 2^{-m\gamma} \pi(D(2^{-m},1)) \, \exp\pthb{ -2^{-m\beta\eps+1} } \geq 2^{-m\gamma-1} \,\pi\pthb{ D(2^{-m},1) }.
  \]
  Furthermore, according to the definition of $\beta$, there also exists an increasing sequence $(m_n)_{n\in\N}$ such that for all $n\in\N$, $\pi(D(2^{-m_n},1))\geq 2^{m_n \beta(1-\eps)}$. Therefore, along this sequence, we obtain $\pr{A} \geq 2^{-m_n 5\beta\eps-1}$ for every $n\in\N$.

  Let now consider an interval $\Ii$ of size $2^{-m_n\alpha}$. There exist at most $2^{-m_n\alpha+m_n\gamma}$ disjoint sub-intervals $I$ of size $3\cdot2^{-m_n \gamma}$. We designate by $B$ the event where $A$ is not satisfied by all these sub-elements $I$. Owing to the previous estimate of $\pr{A}$ and the independence of these different events, we obtain
  \[
    \pr{B} = \pthb{ P(A^c) }^{ 2^{m_n(\gamma-\alpha)} } \leq \pthb{ 1 - 2^{-m_n 5\beta\eps-1} }^{ 2^{m_n(\gamma-\alpha)} }.
  \]
  Note that $\gamma-\alpha = 6\beta\eps$. Hence, $\log\pthb{\pr{B}} \leq - 2^{-m_n 5\beta\eps-1}\cdot 2^{m_n 6\beta\eps} = -2^{m_n\beta\eps-1}$ and the probability $\pr{B}$ satisfies $\pr{B} \leq \exp\pthb{ -2^{m_n\beta\eps-1} }$.

  Finally, we know there exist at most $2^{m_n\alpha+1}$ disjoint intervals $J$ of size $2^{-m_n\alpha}$ inside $\ivff{0,1}$. We denote by $B_n$ the event where $B$ is satisfied for one of the previous interval $\Ii$. Since $B_n$ is the reunion of events, we obtain 
  \[
    \pr{B_n} \leq 2^{m_n\alpha+1}\cdot\exp\pthb{ -2^{m_n\beta\eps-1} } \leq \exp\pthb{ -2^{m_n\beta\eps-1} + c\, m_n\alpha }.
  \]
  Therefore, $\sum_{n\in\N} \pr{B_n} < \infty$ and owing to Borel--Cantelli lemma, with probability one, there exists $N(\omega)$ such that for every $n\geq N(\omega)$, $\omega\in B^c_n$. The latter inclusion means that for every interval $\Ii$ previous defined, there exists a sub-element $I$ such that the event $A$ is satisfied on $I$, which proves this lemma.
\end{proof}

The previous lemma will help us to obtain a uniform upper bound on the 2-microlocal frontier of $Y$.
\begin{lemma} \label{lemma:2ml_levy1}
  With probability one, for all $t\in\ivff{0,1}$, the 2-microlocal frontier of $Y$ at $t$ satisfies
  \begin{equation}  \label{eq:lemma1_ineq_2ml}
    \forall s'\in\R;\quad \sigma_{Y,t}(s') \leq \pthB{ \frac{1}{\beta} + s' }\wedge 0.
  \end{equation}
\end{lemma}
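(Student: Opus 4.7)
The plan is to argue by contradiction, using the jump structure of $Y$ to violate the integral reformulation of the 2-microlocal condition whenever $\sigma-s'>1/\beta$. The paragraphs preceding the lemma already establish that $\sigma_{Y,t}(s')\leq 0$ for every $s'\in\R$, so the only nontrivial content is to show $\sigma_{Y,t}(s')\leq 1/\beta+s'$ in the range $s'\leq -1/\beta$. Suppose, for contradiction, that $Y\in C^{\sigma,s'}_t$ for some $\sigma\in\ivff{-1,0}$ with $s\eqdef \sigma-s'>1/\beta$. By \eqref{eq:2ml_spaces_int}, this is equivalent to $IY\in C^{\sigma+1,s'}_t$ with $\sigma+1\in\ivff{0,1}$; and as explained in the paragraph preceding the lemma, the polynomial $P_t$ (linear, and quadratic when $\beta<1$) can absorb the affine and drift parts of $IY$ at $t$, reducing the condition to
\[
  \absbb{\int_u^v \bigl(Y_r - Y_t - c(r-t)\bigr)\,\dt r} \leq C\abs{u-v}^{\sigma+1}\bigl(\abs{u-t}+\abs{v-t}\bigr)^{-s'}
\]
for all $u,v\in B(t,\rho)$, with $c=\int_{D(0,1)} x\,\pi(\dt x)$ if $\beta<1$ and $c=0$ otherwise.

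Since $s>1/\beta$, I can fix $\delta$ with $1/s<\delta<\beta$. The central intermediate claim, which I would prove next, is that almost surely $\ivff{0,1}\subseteq A_\delta$: for every $t\in\ivff{0,1}$ there is a sequence of jump times $(s_n)$ of $Y$ with $s_n\to t$ and $\norm{\Delta Y_{s_n}}\geq \abs{s_n-t}^{1/\delta}$. This is a ubiquity statement that I would establish via a dyadic Borel--Cantelli argument: at scale $2^{-m}$, partition $\ivff{0,1}$ into $2^m$ subintervals and estimate, using a Poisson tail bound together with $\pi\pthb{D(2^{-m/\delta},1)}\gtrsim 2^{m\beta/\delta}$ (a direct consequence of $\delta<\beta$), the probability that some subinterval contains no jump of size at least $2^{-m/\delta}$. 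The resulting estimate is summable in $m$, so that a routine covering argument extends the conclusion to every $t$.

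Given such a jump at $s_n$, I test the integral inequality with $u_n=s_n-r_n$, $v_n=s_n+r_n$, $r_n=\abs{s_n-t}/2$, so that $\abs{u_n-v_n}=\abs{s_n-t}$ and $\abs{u_n-t}+\abs{v_n-t}\asymp\abs{s_n-t}$. The jump at $s_n$ contributes a term of order $\norm{\Delta Y_{s_n}}\cdot r_n$ to $\int_{u_n}^{v_n}(Y_r-Y_t-c(r-t))\,\dt r$; the residual contribution of the small-jump component at scale $2^{-m}\asymp\abs{s_n-t}$ is controlled by Lemmas \ref{lemma:tech_lemma1} and \ref{lemma:tech_lemma2}, yielding an error of strictly smaller order. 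Plugging into the inequality and cancelling the factor $r_n$ forces $\norm{\Delta Y_{s_n}}\lesssim \abs{s_n-t}^s$, contradicting $\norm{\Delta Y_{s_n}}\geq \abs{s_n-t}^{1/\delta}$ because $\abs{s_n-t}^{1/\delta-s}\to+\infty$ as $s_n\to t$.

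The main obstacle I expect is the ubiquity step together with the uniform control of the residual contribution. Transferring a Borel--Cantelli estimate from a countable family of dyadic intervals to every $t\in\ivff{0,1}$ simultaneously requires careful summability accounting, and the fact that the jump size is only guaranteed to satisfy the lower bound $\abs{s_n-t}^{1/\delta}$---which may be of the same polynomial order as the typical small-jump oscillation predicted by Lemma \ref{lemma:tech_lemma2}---means one must choose $\delta$ sufficiently close to $\beta$ and possibly select subsequences along which the sign of the residual does not conspire to cancel the jump contribution. Balancing these two scales so that the jump term dominates is where the proof's delicate bookkeeping lies.
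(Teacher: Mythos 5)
Your overall strategy---integrate once via \eqref{eq:2ml_spaces_int}, locate large jumps densely near every $t$, and test the integral form of the 2-microlocal inequality so that the jump term beats the residual---is the same as the paper's, and your ubiquity claim ($\ivff{0,1}\subseteq A_\delta$ a.s.\ for $\delta<\beta$) is correct and provable by the dyadic Borel--Cantelli argument you sketch. The gap is the step you defer as ``delicate bookkeeping'': it is not delicate, it fails as stated. You test on a window of length $2r_n\asymp\abs{s_n-t}\asymp 2^{-m}$ and assert that the residual is of strictly smaller order than the jump contribution $r_n\norm{\Delta Y_{s_n}}\geq r_n 2^{-m/\delta}$. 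But Lemma \ref{lemma:tech_lemma2} only controls the truncated process $Y^{M/\delta'}$ with $\delta'>\beta$ on intervals of length $2^{-M}$; taking $M=m$ it bounds the oscillation of the small-jump part on $\ivff{u_n,v_n}$ by $3m2^{-m/\delta'}$, and since $\delta<\beta<\delta'$ this bound \emph{dominates} the guaranteed jump size $2^{-m/\delta}$, whatever $\delta<\beta$ you choose; no choice of parameters reverses the inequality. Worse, the ``medium'' jumps of size between $2^{-m/\delta'}$ and $2^{-m/\delta}$ that may also lie in the window are not controlled at all by Lemmas \ref{lemma:tech_lemma1}--\ref{lemma:tech_lemma2}; they can have opposite sign and cancel the designated jump at the level of the integral. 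Example \ref{ex:2ml_levy2} shows that such cancellation genuinely occurs for suitable L\'evy measures and genuinely lifts the frontier, so no argument using only ``there is a jump of size $\geq\abs{s_n-t}^{1/\delta}$ at $s_n$'' can close the proof. (A smaller defect of the symmetric window: the constant $Y_{s_n-}-Y_t$ contributes $2r_n(Y_{s_n-}-Y_t)$, of the same order as $r_n\Delta Y_{s_n}$, and may cancel it; the paper sidesteps this by noting that $\norm{Y_{t_n}-Y_t}\geq 2^{-m-1}$ or $\norm{Y_{t_n-}-Y_t}\geq 2^{-m-1}$ and testing a one-sided interval.)

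The paper repairs exactly this with a two-scale construction: along a subsequence $(m_n)$ it produces, near every $t$, a jump of size $\geq 2^{-m_n}$ at distance $\leq 2^{-m_n\beta(1-2\eps)}$ from $t$ which is moreover \emph{isolated}, in the sense that no other jump of size $>2^{-m_n(1+\eps)}$ lies within $\rho_n=2^{-m_n\beta(1+4\eps)}/3$ of it (the events $A_{i,j}$, which demand one jump in the middle third of $I_{i,j}$, none in the outer thirds, and no jump in $D(2^{-m(1+\eps)},2^{-m})$ at all). The 2-microlocal inequality is then tested only on the short quiet interval $\ivff{t_n,t_n+\rho_n}$, where Lemma \ref{lemma:tech_lemma2} applied at cutoff $2^{-m_n(1+\eps)}$ yields a residual $O(m_n2^{-m_n(1+\eps)})=\littleo(2^{-m_n})$, genuinely smaller than the jump. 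The price of $\rho_n\ll\abs{t-t_n}$ is that one obtains only the bound $\sigma_{Y,t}(s')\leq(1/\beta+s')\wedge 0$ on the range $\sigma_{Y,t}(s')\in\ivff{-1,0}$ rather than a statement about the pointwise exponent of the integral---which is precisely why the lemma is phrased this way. To complete your proof you would need to import this isolation construction; the ubiquity of $A_\delta$ alone is not enough.
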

\begin{proof}
  Let us first observe that to obtain an upper bound of the 2-microlocal frontier of the $\R^d$-valued $Y=(Y_1,\dotsc,Y_d)$ process, it is sufficient to prove this bound holds for one the component $Y_i$. Furthermore, we also know that each of these components is a one-dimensional Lévy process and there exists $i\in\brc{1,\dotsc,d}$ such that the Blumenthal--Getoor exponent of $Y_i$ is equal to $\beta$. Hence, considering these two remarks, we may assume without any loss of generality that we study only one component, and thus $d=1$.

  Let us set $t\in\ivff{0,1}$. We need to evaluate the size of the increments described in Equation~\eqref{eq:levy_int_incr}. Hence, let us first determine the form of the local process $Y(u,k)\eqdef\pthb{ I^k_{b+}Y }(u)-P_{t,k}(u)$ used. We know that when $k=0$, the polynomial component is described in Proposition~\ref{prop:pointwise_levy}, and thus we define the local process $Y(s,0)$ in the neighbourhood of $t$ as following:
  \[
    \forall u\in\R;\quad Y(u,0) = Y_u - Y_t - P_t(u).
  \]
  Then, since the polynomial component must correspond to the Taylor development of the process at $t$, we define the elements $Y(\cdot,k)$ be induction:
  \[
    \forall u\in\R;\quad Y(u,k) = \int_t^u Y(s,k-1) \,\dt s.
  \]
  One can easily verify that the derivative of $Y(\cdot,k)$ is $Y(\cdot,k-1)$ and $Y(t,k)=0$, proving that the Taylor development of $Y(\cdot,k)$ at $t$ is $P\equiv 0$. Therefore, this construction procedure ensures that the difference between $Y(\cdot,k)$ and $I^k_{b+}Y$ corresponds to the polynomial function appearing in the definition of the 2-microlocal spaces.

  Hence, we need to show in this proof that for any $k\in\N$, the increments of the process $Y(\cdot,k)$ are sufficiently large in the neighbourhood of $t$. More precisely, we will show by induction that there exist $t_{n,k}\rightarrow_n t$, $\rho_{n,k}>0$ and $\delta_{n,k}>0$ such that for every $k\in\N$ and all $n\in\N$:
  \begin{equation}  \label{eq:2ml_induction}
    \forall u\in\ivfo{t_{n,k},t_{n,k}+\rho_{n,k}};\quad \abs{ Y(u,k) } \geq \delta_{n,k}.
  \end{equation}
  To initialize the induction with $k=0$, we make use of the estimate obtained in Lemma~\ref{lemma:levy_jumps1}: there exists an increasing sequence $(m_n)_{n\in\N}$ such that with probability one, for all $t\in\ivff{0,1}$ and for every $n\geq N(\omega)$
  \[
    \exists v\in B(t,2^{-m_n\alpha});\quad \abs{\Delta Y_v} \geq 2^{-m_n} \quad\text{and}\quad J\pthb{ B(v,2^{-m_n \gamma})\times D(2^{-m_n(1+\eps)},1) } = 1,
  \]
  where $\alpha = \beta(1-2\eps)$ and $\gamma = \beta(1+4\eps)$. Since the reasoning which follows is completely symmetric, we may assume without any loss of generality that $v\geq t$ and $\Delta Y_v\geq 0$. Let us set $n\geq N(\omega)$ and a proper $v\geq t$. We know there is no other jump of size greater than $2^{-m_n(1+\eps)}$ inside the ball $B(v,2^{-m_n \gamma})$. Therefore, for all $u\in B(v,2^{-m_n \gamma})$, 
  \[
    Y_u-Y_v = -\Delta Y_v\indi_{\brc{u<v}} -(u-v) \int_{D(2^{-m_n(1+\eps)},1)} x \,\pi(\dt x) + Y_u^{m_n(1+\eps)}-Y_v^{m_n(1+\eps)}. 
  \]
  Furthermore, according to Lemma~\ref{lemma:tech_lemma2}, the norm of the latter increment satisfies:
  \[
    \absb{ Y_u^{m_n(1+\eps)}-Y_v^{m_n(1+\eps)} } \leq c_1 \,m_n 2^{-m_n(1+\eps)},
  \]
  as we note that $\abs{u-v}\leq 2^{-m_n\beta(1+4\eps)} = 2^{-m_n(1+\eps)\beta(1+4\eps)/(1+\eps)}$ with $\beta(1+4\eps)/(1+\eps) > \beta$. 
  Then, similarly to the proof of Proposition~\ref{prop:pointwise_levy}, we need to distinguish two different cases.
  \begin{enumerate}[ \it 1.]
    \item If $\beta\geq 1$, $P_t\equiv 0$ and thus $Y(u,0) = Y_u - Y_t$. Let us first assume that $Y(v,0)\geq 2^{-m_n-1}$ and set $t_{n,0} = v$ and $\rho_{n,0} = 2^{-m_n\gamma}$. Then, for all $u\in\ivfo{t_{n,0},t_{n,0}+\rho_{n,0}}$:
    \begin{align*}
      \abs{Y(u,0)} \geq \abs{Y(v,0)} - \absb{ Y_u^{m_n(1+\eps)}-Y_v^{m_n(1+\eps)} } - \absbb{(u-v) \int_{D(2^{-m_n(1+\eps)},1)} x \,\pi(\dt x)}.
    \end{align*}
    Using the estimates presented in Proposition~\ref{prop:pointwise_levy}, we obtain an upper bound of the last term:
    \[
      \absbb{(u-v) \int_{D(2^{-m_n(1+\eps)},1)} x \,\pi(\dt x)}
      \leq c\,2^{-m_n\gamma} \cdot 2^{-m_n(1+\eps)(1-\beta(1+\eps))}
      \leq c\,2^{-m_n(1+\eps)},
    \]
    since $-\gamma + \beta(1+\eps)^2 = -\beta(2\eps-\eps^2) < 0$. Hence, $\abs{Y(u,0)} \geq 2^{-m_n-1} - c\,2^{-m_n(1+\eps)} \geq 2^{-m_n-2}$ for any $n$ sufficiently large. 

    Let now assume that $Y(v,0)\leq 2^{-m_n-1}$. Since $\Delta Y_v\geq 2^{-m_n}$, we necessarily have $Y(v_-,0)\leq -2^{-m_n-1}$. Then, we set in this case $t_{n,0} = v-2^{-m_n\gamma}$ and $\rho_{n,0} = 2^{-m_n\gamma}$, and obtain as well $\abs{Y(u,0)} \geq 2^{-m_n-1} - c\,2^{-m_n(1+\eps)} \geq 2^{-m_n-2}$.\vsp

    \item If $\beta < 1$, $P_t(u) \equiv -(u-t)\int_{D(0,1)} \,\pi(\dt x)$. Similarly to the previous case, we first assume that $Y(v,0)\geq 2^{-m_n-1}$ and set $t_{n,0} = v$ and $\rho_{n,0} = 2^{-m_n\gamma}$. Then, for all $u\in\ivff{t_{n,0},t_{n,0}+\rho_{n,0}}$:
    \begin{align*}
      \abs{Y(u,0)} \geq \abs{Y(v,0)} - \absb{ Y_u^{m_n(1+\eps)}-Y_v^{m_n(1+\eps)} } - \absbb{(u-v) \int_{D(0,2^{-m_n(1+\eps)})} x \,\pi(\dt x)},
    \end{align*}
    where the latter element satisfies
    \[
      \absbb{(u-v) \int_{D(0,2^{-m_n(1+\eps)})} x \,\pi(\dt x)}
      \leq c\,2^{-m_n\gamma} \cdot 2^{-m_n(1+\eps)(1-\beta(1+\eps))}
      \leq c\,2^{-m_n(1+\eps)}.
    \]
    Hence, $\abs{Y(u,0)} \geq 2^{-m_n-1} - c\,2^{-m_n(1+\eps)} \geq 2^{-m_n-2}$ for any $n$ sufficiently large. In the case $Y(v,0)\leq 2^{-m_n-1}$, we observe that $Y(v_-,0)\leq -2^{-m_n-1}$. Therefore, setting $t_{n,0} = v-2^{-m_n\gamma}$ and $\rho_{n,0} = 2^{-m_n\gamma}$, we obtain $\abs{Y(u,0)} \geq 2^{-m_n-1} - c\,2^{-m_n(1+\eps)} \geq 2^{-m_n-2}$.
  \end{enumerate}
  Therefore, in both cases, we have proved that 
  \[
    \forall u\in\ivfo{t_{n,0},t_{n,0}+\rho_{n,0}}; \quad\abs{Y(u,0)} \geq \delta_{n,0}, 
  \]
  where $\rho_{n,0} = 2^{-m_n\gamma}$, $\delta_{n,0}=2^{-m_n-1}$ and $B(t_{n,0},\rho_{n,0})\subset B(t,2^{-m_n\alpha+1})$.\vsp

  Let now assume that Equation~\eqref{eq:2ml_induction} is satisfied for $k\in\N$. Without any loss of generality, we may suppose that $Y(u,k) \geq \delta_{n,k}$ on the interval $\ivfo{t_{n,k},t_{n,k}+\rho_{n,k}}$ (otherwise, simply consider the process $-Y(u,k)$ in the following reasoning). In this case, the function $u\mapsto \int_t^u Y(s,k) \,\dt s$ is strictly increasing on the previous interval. 

  Let us first assume that $\int_t^{t_{n,k}+\rho_{n,k}/2} Y(s,k) \,\dt s \geq 0$. In this case, we set $t_{n,k+1} = t_{n,k}+3/4 \rho_{n,k}$, $\rho_{n,k+1} = \rho_{n,k} / 4$ and $\delta_{n,k+1} = \rho_{n,k}\delta_{n,k} / 4$. Then, for all $u\in\ivfo{t_{n,k+1},t_{n,k+1}+\rho_{n,k+1}}$
  \begin{align*}
    Y(u,k+1)
    \geq \int_{t_{n,k}+\rho_{n,k}/2}^{u} Y(s,k) \,\dt s
    &\geq \pth{u-t_{n,k}-\rho_{n,k}/2}\,\delta_{n,k} \\
    &\geq \rho_{n,k}\delta_{n,k} / 4 = \delta_{n,k+1},
  \end{align*}
  In the other case $\int_t^{t_{n,k}+\rho_{n,k}/2} Y(s,k) \,\dt s \leq 0$, we consider the set of parameters $t_{n,k+1} = t_{n,k}$, $\rho_{n,k+1} = \rho_{n,k} / 4$ and $\delta_{n,k+1} = \rho_{n,k}\delta_{n,k} / 4$. Then, for all $u\in\ivfo{t_{n,k+1},t_{n,k+1}+\rho_{n,k+1}}$
  \begin{align*}
    Y(u,k+1)
    \leq -\int_{u}^{t_{n,k}+\rho_{n,k}/2} Y(s,k) \,\dt s
    &\leq -\pth{t_{n,k}+\rho_{n,k}/2 - u}\,\delta_{n,k} \\
    &\leq -\rho_{n,k}\delta_{n,k} / 4 = -\delta_{n,k+1},
  \end{align*}
  Therefore, assuming that Equation~\eqref{eq:2ml_induction} holds for $k\in\N$, we have proved that it does too for $k+1$ with $\rho_{n,k+1} = \rho_{n,k} / 4$, $\delta_{n,k+1} = \rho_{n,k}\delta_{n,k} / 4$ and $B(t_{n,k+1},\rho_{n,k+1})\subset B(t_{n,k},\rho_{n,k})\subset B(t,2^{-m_n\alpha+1})$.\vsp

  Finally, the lower bound on $Y_{n,k}$ presented in Equation~\eqref{eq:2ml_induction} will now help us to obtain the expected bound on the 2-microlocal frontier. Owing to the previous definitions, for every $k\in\N$, $\abs{t_{n,k}-t} \leq 2^{-m_n\alpha+1}$ and there exist $c_k>0$ independent of $n\in\N$ such that $\delta_{n,k} = c_k\, 2^{-m_n \,k\gamma} \cdot 2^{-m_n}$. Hence, for every $n\in\N$,
  \begin{align*}
    \abs{ Y(t_{n,k},k)-Y(t,k) } 
    \geq \delta_{n,k}
    \geq  c_k\, 2^{-m_n(1+k\gamma)} \geq c_k\, \abs{t_{n,k}-t}^{(1+k\gamma)/\alpha},
  \end{align*}
  where we recall that $\alpha = \beta(1-2\eps)$ and $\gamma = \beta(1+4\eps)$. Therefore, this last inequality proves that the pointwise exponent of $Y(\cdot,k)$ at $t$ satisfies $\alpha_{Y(\cdot,k),t} \leq (1+k\gamma)/\alpha \rightarrow_{\eps\rightarrow 0} 1/\beta + k$. Owing to the Definition~\ref{def:2ml_spaces_gen} of the 2-microlocal spaces, this last inequality induces that with probability one, for any $t\in\ivff{0,1}$ and all $s'\in\R$, $\sigma_{Y,t}(s') \leq \pthb{ \frac{1}{\beta} + s' }\wedge 0$.
\end{proof}

As we have obtained a uniform upper bound on the 2-microlocal frontier, we now study more precisely the regularity of $Y$ at times where $\alpha_{Y,t} < 1 / \beta$. To begin with, we prove a simple lemma related to the number of jumps inside an interval.
\begin{lemma} \label{lemma:tech_lemma3}
  Suppose $\delta > \beta$, $\eps>0$ and $k\in\N$ are such that $\delta > \beta (1+2\eps)(k+1) / k$. Then, with probability one, there exists $M(\omega)$ such that:
  \begin{equation}
    \forall t\in\ivff{0,1};\quad J\pthb{ B(t,2^{-m\delta}), D(2^{-m(1+\eps)},1) } \leq k,
  \end{equation}
  for every $m\geq M(\omega)$.
\end{lemma}
\begin{proof}
  Let $m\in\N$ and $I$ be an interval of size $2^{-m\delta+2}$. Since $J$ is a Poisson random measure,
  \begin{align*}
    \prb{ J\pthb{I,D(2^{-m(1+\eps)},1)} > k } = \exp\pth{ -\lambda_m} \brcbb{ \sum_{\ell = k+1}^{+\infty}  \frac{\lambda_m^\ell}{\ell!} },
  \end{align*}
  where $\lambda_m = 2^{-m\delta+2}\,\pi\pthb{D(2^{-m(1+\eps)},1)}\leq 2^{-m\delta+2 + m\beta(1+2\eps)}\rightarrow 0$. Hence, we obtain the inequality $\prb{ J\pthb{I,D(2^{-m(1+\eps)},1)} > k } \leq c\, \lambda_m^{k+1}$. 

  Considering a covering of the interval $\ivff{0,1}$ with $\ceil{2^{m\delta}}$ overlapping sub-elements $I$ of size $2^{-m\delta+2}$, we denote by $B_m$ the event where at least one of these intervals has more than $k$ jumps inside. Then,
  \[
    \pr{B_m} \leq c\,2^{m\delta}\cdot\lambda_m^{k+1} \leq c\, 2^{m\delta-m\delta(k+1)+m\beta(k+1)(1+2\eps)}.
  \]
  Since $\delta k > \beta (1+2\eps)(k+1)$, there exists $\gamma>0$ such $\pr{B_m} \leq c\,2^{-m\gamma}$. Therefore, $\sum_{m\in\N} \pr{B_m} < \infty$ and owing to Borel--Cantelli lemma, there exists $M(\omega)$ such that for every $m\geq M(\omega)$, $J\pthb{I,D(2^{-m(1+\eps)},1)} \leq k$. Finally, since we consider intervals $I$ of size $2^{-m\delta+2}$ covering $\ivff{0,1}$ and overlapping, we have proved that for all $t\in\ivff{0,1}$, $J\pthb{ B(t,2^{-m\delta}), D(2^{-m(1+\eps)},1) } \leq k$.
\end{proof}

In the next lemma, we start with the study of the 2-microlocal frontier of $Y$ at points $t\in\ivff{0,1}$ where $\alpha_{Y,t}\in\ivff{0,1 / 2\beta}$.
\begin{lemma}  \label{lemma:2ml_levy2}
  With probability one, for all $h\in\ivff{0,1/2\beta}$, the singularities of $Y$ satisfy $\widetilde{E}_h = E_h$ and $\widehat{E}_h=\emptyset$, i.e. for all $t\in E_h$
  \[
    \forall s'\in\R;\quad \sigma_{Y,t}(s') = ( \alpha_{Y,t}+s' )\wedge 0.
  \]
\end{lemma}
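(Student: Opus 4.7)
The plan is to establish $\sigma_{Y,t}(s') = (h+s')\wedge 0$ at every $t \in E_h$ and every $s' \in \R$, which immediately yields $\widetilde{E}_h = E_h$ and $\widehat{E}_h = \emptyset$. The upper bound will be obtained softly: since the jumps $S(\omega)$ are dense in $\ivff{0,1}$, the local H\"older exponent at $t$ vanishes and $\sigma_{Y,t}\leq 0$ everywhere; combined with concavity of the frontier, slopes in $\ivff{0,1}$, and the classical embedding $\alpha_{Y,t}=h \Rightarrow Y\in C^{0,s'}_t$ for $s' > -h$ (\cite{Meyer(1998)}, Thm.~3.13), one obtains $\sigma_{Y,t}(s')\leq (h+s')\wedge 0$. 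The same embedding yields $\sigma_{Y,t}(s')\geq 0$ for $s'\geq -h$, so only the matching lower bound $\sigma_{Y,t}(s')\geq h+s'$ in the range $s' < -h$ needs work.

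For this, I would fix $s' < -h$ and $\sigma < h+s'$, and use the fractional-integration property \eqref{eq:2ml_spaces_int} to reduce $Y\in C^{\sigma,s'}_t$ to $I^1Y\in C^{1+\sigma,s'}_t$ (iterating to higher antiderivatives should $1+\sigma\leq 0$). After subtracting the Taylor polynomial $P_t(u) = I^1Y(t) + Y_t(u-t)$, the time-domain characterisation reduces to
\[
  \normbb{ \int_u^v \pth{Y_r - Y_t}\,\dt r } \leq C\, \abs{u-v}^{1+\sigma}\pth{\abs{u-t}+\abs{v-t}}^{-s'}
\]
uniformly for $u,v$ in a small neighbourhood of $t$.

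The main input will be a sharp logarithmic-H\"older estimate $\norm{Y_r - Y_t}\leq Cm\,2^{-m/\delta}$ whenever $\abs{r-t}\in\ivfo{2^{-m-1},2^{-m}}$, valid for every $\delta > 1/h$. This reproduces the argument in the proof of Proposition~\ref{prop:pointwise_levy}: since $\alpha_{Y,t}=h<1/\delta$ forces $t\notin A_\delta$, no jump of size $\geq 2^{-m/\delta}$ can lie on the interval joining $t$ and $r$, so only the small-jump component $Y^{m/\delta}$ contributes (its residual drift compensation being linear in $r-t$ and thus absorbed by $P_t$), and Lemma~\ref{lemma:tech_lemma2} supplies the control on $Y^{m/\delta}$. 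The hypothesis $h < 1/(2\beta)$ enters through Lemma~\ref{lemma:tech_lemma3} with $k=1$ (applicable since $\alpha = 1/h > 2\beta$), which rules out the clusters of big jumps near $t$ responsible for the non-standard configurations exploited in the upper-bound construction of Lemma~\ref{lemma:2ml_levy1}.

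It then remains to bound $\int_u^v\abs{r-t}^{1/\delta}\,\dt r$ by $\abs{u-v}^{1+\sigma'}(\abs{u-t}+\abs{v-t})^{-s'}$ for any $\sigma' < 1/\delta+s'$, the logarithmic factor being absorbed by the slack $\sigma<\sigma'$; letting $\delta\downarrow 1/h$ then delivers the bound for every $\sigma < h+s'$. A direct case analysis handles this estimate: when $t\in\ivff{u,v}$ the integral is bounded by $C\abs{u-v}^{1+1/\delta}$; when $t\notin\ivff{u,v}$, setting $a=\abs{u-t}\leq b=\abs{v-t}$, the mean-value theorem yields $\int \leq (b-a)\,b^{1/\delta}$. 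The delicate part will be the sub-regime $a\asymp b$ with $b-a\ll b$, in which the smallness of $\abs{u-v}^{-\sigma}$ (coming from $\sigma<0$) must exactly offset the potentially large factor $b^{1/\delta+s'}$; this is where the strict inequality $\sigma < 1/\delta+s'$, combined with $\abs{u-v}\leq b$, is genuinely exploited.
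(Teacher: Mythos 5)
Your proposal has the two halves of the equality interchanged, and the half you dispose of ``softly'' is precisely the one that carries all the content of the lemma. The soft facts you invoke --- density of jumps giving $\sigma_{Y,t}\leq 0$, Meyer's embedding giving $\sigma_{Y,t}(s')\geq 0$ for $s'>-h$, concavity and slopes in $\ivff{0,1}$ --- yield $\sigma_{Y,t}(s')=0$ for $s'\geq -h$ and then, moving left from the point $(-h,0)$ with slope at most $1$, the \emph{lower} bound $\sigma_{Y,t}(s')\geq h+s'$ for $s'<-h$. They do not yield the upper bound $\sigma_{Y,t}(s')\leq h+s'$: on that range concavity only forces $\sigma_{Y,t}(s')\leq\sigma_{Y,t}(-h)=0$, and a frontier leaving $(-h,0)$ with left-slope strictly less than $1$ is perfectly compatible with everything you list. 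That no soft argument can rule this out is demonstrated by the paper itself: Lemma~\ref{lemma:2ml_levy3} and Example~\ref{ex:2ml_levy2} exhibit, for $h\in\ivoo{1/2\beta,1/\beta}$, nonempty sets $\widehat{E}_h$ of times at which all of your soft hypotheses hold and yet $\sigma_{Y,t}(s')>h+s'$ for $s'<-h$. The restriction $h<1/2\beta$ exists precisely to make the hard upper bound true.

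Consequently the detailed estimates in your proposal (the bound $\norm{Y_r-Y_t}\leq Cm\,2^{-m/\delta}$, the integration, the case analysis on $\abs{u-t}$ and $\abs{v-t}$) only re-derive the lower bound $\sigma_{Y,t}(s')\geq (h+s')\wedge 0$, which is already available for free from Theorem~3.13 of \cite{Meyer(1998)} together with concavity, as recalled at the start of Section~\ref{ssec:2ml_levy}. What is missing is the entire substance of the paper's argument: using $t\in E_h\subseteq A_\delta$ for $\delta<1/h$ to produce jumps $t_n$ with $\norm{\Delta Y_{t_n}}=2^{-m_n}$ at distance about $2^{-m_n/h(1+\eps)}$ from $t$, invoking Lemma~\ref{lemma:tech_lemma3} with $k=1$ (this is where $1/h>2\beta$ genuinely enters) to guarantee that $t_n$ is \emph{isolated}, with no other jump of comparable size within $\rho_n$, and then bounding $\normb{\int_{t_n}^{t_n+\rho_n}(Y_s-Y_t)\,\dt s}$ from \emph{below} by $C\rho_n2^{-m_n}$, which is what forces $\sigma_{Y,t}(s')\leq (h+s')\wedge 0$. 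You do cite Lemma~\ref{lemma:tech_lemma3}, but in your scheme it plays no role: the upper estimate on $\norm{Y_r-Y_t}$ needs only $t\notin A_\delta$ for $\delta>1/h$, not the absence of jump clusters. The separate treatment of jump times $t\in S(\omega)$, where $h=0$ and one must check directly that no single polynomial can compensate both one-sided primitives, is also absent.
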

\begin{proof}
  Suppose $h\in\ivff{0,1 / 2\beta}$ and $t\in E_h\setminus S(\omega)$ ($t$ is not a jump time). Since we already know that $\sigma_{Y,t}(s')\geq ( h+s' )\wedge 0$, we need to only prove the other side inequality. For that purpose, we will proceed similarly to the proof of Lemma~\ref{lemma:2ml_levy1}.

  More precisely, let us set $\eps>0$ and $\delta > \max\pthb{ 2\beta(1+2\eps), 1/(h+\eps)}$. Since $t\in E_h$ and owing to Equation~\eqref{eq:charac_Eh}, there exist two sequences $(v_n)_{n\in\N}$ and $(m_n)_{n\in\N}$ such that 
  \[
    \forall n\in\N;\quad v_n\in B\pthb{ t,2^{-m_n / (h+\eps)} }\quad\text{ and }\quad \normb{\Delta Y_{v_n}} \geq 2^{-m_n}.
  \]
  Without any loss of generality, we may assume that $v_n\geq t$.
  Furthermore, owing to Lemma~\ref{lemma:tech_lemma3}, $J\pthb{ B(v_n,2^{-m_n\delta}), D(2^{-m_n(1+\eps)},1) } = 1$, i.e. there is no other jump larger than $2^{-m_n(1+\eps)}$ in the neighbourhood of $v_n$.
  Then, similarly to the proof of Lemma~\ref{lemma:2ml_levy1}, we need to distinguish two different cases.
  \begin{enumerate}[ \it 1.]
    \item If $\beta\geq 1$, $P_t\equiv 0$ and thus $Y(u,0) = Y_u - Y_t$. Consider $n\in\N$ and first assume that $\norm{Y(v_n,0)}\geq 2^{-m_n-1}$. Let us also set $t_n = v_n$ and $\rho_n = 2^{-m_n\delta}$. Then, for all $u\in\ivfo{t_{n},t_{n}+\rho_{n}}$:
    \begin{align*}
      \norm{Y(u,0)} \geq \norm{Y(v_n,0)} - \normb{ Y_u^{m_n(1+\eps)}-Y_{v_n}^{m_n(1+\eps)} } - \normbb{(u-v_n) \int_{D(2^{-m_n(1+\eps)},1)} x \,\pi(\dt x)}.
    \end{align*}
    Still using the same estimates as in the proof of Lemma~\ref{lemma:tech_lemma3}, we know that the last two terms are upper bounded by $ c\,2^{-m_n(1+\eps)}$, proving that $\norm{Y(u,0)}\geq 2^{-m_n-2}$ for any $n$ sufficiently large. The case $\norm{Y(v_n,0)}\leq 2^{-m_n-1}$ is treated completely similarly, using $t_n = v_n-2^{-m_n\delta}$ and $\rho_n = 2^{-m_n\delta}$.\vsp

    \item If $\beta < 1$, $P_t(u) \equiv -(u-t)\int_{D(0,1)} \,\pi(\dt x)$. Assuming first  that $\norm{Y(v_n,0)}\geq 2^{-m_n-1}$. we still set $t_{n} = v_n$ and $\rho_{n} = 2^{-m_n\delta}$. Then, for all $u\in\ivfo{t_{n},t_{n}+\rho_{n}}$:
    \begin{align*}
      \norm{Y(u,0)} \geq \norm{Y(v_n,0)} - \normb{ Y_u^{m_n(1+\eps)}-Y_{v_n}^{m_n(1+\eps)} } - \normbb{(u-v_n) \int_{D(0,2^{-m_n(1+\eps)})} x \,\pi(\dt x)},
    \end{align*}
    As previously, the last two terms are upper bounded by $ c\,2^{-m_n(1+\eps)}$, proving that $\norm{Y(u,0)}\geq 2^{-m_n-2}$ for any $n$ sufficiently large. The case $\norm{Y(v_n,0)}\leq 2^{-m_n-1}$ is treated similarly using $t_n = v_n-2^{-m_n\delta}$ and $\rho_n = 2^{-m_n\delta}$.
  \end{enumerate}
  Therefore, we have proved in both cases that for all $u\in\ivfo{t_{n},t_{n}+\rho_{n}}$, with $n$ sufficiently large, $\norm{Y(u,0)}\geq 2^{-m_n-2}$. Reproducing the same reasoning as in the proof of Lemma~\ref{lemma:2ml_levy1}, there exists $s_n$ such that for every $n\in\N$, $s_n\in  B(t,2^{-m_n / (h+\eps)})$ and 
  \begin{align*}
    \norm{ Y(s_n,1) } 
    \geq c\, 2^{-m_n}\cdot 2^{-m_n\delta}
    \geq \abs{t-s_n}^{(h+\eps)(1+\delta)}.
  \end{align*}
  Hence, $\alpha_{Y(\cdot,1),t} \leq (h+\eps)(1+\delta)$. Considering the limit $\eps\rightarrow 0$ and $\delta\rightarrow 1/h$, we obtain $\alpha_{Y(\cdot,1),t} \leq h+1$. The latter inequality is sufficient to prove that $\sigma_{Y,t}(s') = (h+s')\wedge 0$ for all $s'\in\R$.

  To conclude this proof, let us consider the case $t\in S(\omega)$. We observe that for all $u\geq t$,
  \[
    \int_t^u Y_s\,\dt s = (u-t)Y_t + \int_t^u (Y_s - Y_t)\,\dt s\quad\text{with }\ \normbb{ \int_t^u (Y_s - Y_t)\,\dt s } = \text{o}(\abs{t-u}),
  \]
  as $Y$ is right-continuous. Similarly, for all $u\leq t$, $\int_u^t Y_s\,\dt s = (t-u)Y_{t-} + \text{o}(\abs{t-u})$. Therefore, since $\Delta Y_t = Y_t - Y_{t-} \neq 0$, there does not exist a polynomial $P_t$ which can cancel both terms $(u-t)Y_{t}$ and $(t-u)Y_{t-}$, proving that $\sigma_{Y,t}(s') = s'\wedge 0$ for all $s'\in\R$.
\end{proof}

In the last technical lemma, we focus on the particular case $\alpha_{Y,t}\in\ivoo{1 / 2\beta, 1 / \beta}$ and try to distinguish oscillating singularities from the common cusp behaviour.
\begin{lemma}  \label{lemma:2ml_levy3}
  With probability one, for all $h\in\ivoo{1/2\beta,1/\beta}$, $Y$ satisfies
  \begin{equation}  \label{eq:2ml_lemma3}
    \forall V\in\Oi;\quad \dimH( \widetilde{E}_h\cap V ) = \beta h\quad\text{ and }\quad \dimH( \widehat{E}_h\cap V ) \leq 2\beta h - 1 \ ( < \beta h ).
  \end{equation}
  Furthermore, for any $t\in \widehat{E}_h$ and all $s'\in\R$, $\sigma_{Y,t}(s') \leq (h+s') / 2\beta h$.
\end{lemma}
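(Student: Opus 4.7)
The plan is to partition $E_h$ into $\widetilde{E}_h \cup \widehat{E}_h$ according to whether $t$ admits an approximating sequence of isolated jumps or whether every such approximation comes with a close companion of comparable size. The central difference with Lemma \ref{lemma:2ml_levy2} is that in the range $h\in\ivfo{1/2\beta, 1/\beta}$ one has $1/h\leq 2\beta$, so Lemma \ref{lemma:tech_lemma3} cannot be applied with $k=1$ and $\alpha$ arbitrarily close to $1/h$; one must allow $k\geq 2$ and thereby admit pairs of jumps of comparable size inside the relevant dyadic intervals. When such pairs partially cancel in the integral of $Y$, the classical frontier no longer holds at $t$, which is exactly what characterises $\widehat{E}_h$.

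For the upper bound on $\dim_\Hi(\widehat{E}_h \cap V)$, I would fix $\eps>0$ small and set $\alpha = 1/(h(1+\eps))$, which lies in $\ivoo{\beta(1+3\eps), 1/h}$ for $\eps$ small enough. I introduce a \emph{crowded} set $F_h^\eps \subset \ivff{0,1}$ of points $t$ such that for infinitely many $m\in\N$ there exist two distinct jumps of size $\geq 2^{-m(1+\eps)}$ at mutual distance at most $2^{-m\alpha}$ and both lying within $2\cdot 2^{-m\alpha}$ of $t$. If $t\in E_h\setminus F_h^\eps$, then for $n$ large every approximating jump $t_n$ (obtained from the characterisation \eqref{eq:charac_Eh} of $E_h$) is isolated at scale $2^{-m_n\alpha}$ from the other jumps of size $\geq 2^{-m_n(1+\eps)}$ permitted by Lemma \ref{lemma:tech_lemma3} with a suitable $k=k(h,\eps)$; adapting the computations \eqref{eq:ineq_fr1}--\eqref{eq:ineq_fr3} (the finitely many remaining small jumps contributing $\littleo(\rho_n\cdot 2^{-m_n})$) recovers an increment lower bound of order $\rho_n\cdot 2^{-m_n}$ with $\rho_n = 2^{-m_n\alpha}$, hence the classical frontier at $t$. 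Thus $\widehat{E}_h \subset F_h^\eps$. Covering $F_h^\eps$ by intervals of radius $O(2^{-m\alpha})$ around each admissible pair and invoking the Blumenthal-Getoor estimate $\pi(D(2^{-m(1+\eps)},1))\leq 2^{m\beta(1+\eps)^2}$ for large $m$, the expected number of such pairs at scale $m$ is at most $C\cdot 2^{m(2\beta(1+\eps)^2-\alpha)}$; summing the expected $s$-Hausdorff pre-measures and applying Borel-Cantelli yields $\dim_\Hi(F_h^\eps)\leq 2\beta(1+\eps)^2 h - 1$, and letting $\eps\to 0$ gives the announced bound $2\beta h - 1$. Since $2\beta h - 1 < \beta h$ whenever $h<1/\beta$, combining with Proposition \ref{prop:pointwise_levy} yields $\dim_\Hi(\widetilde{E}_h \cap V) = \dim_\Hi(E_h \cap V) = \beta h$.

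The step I expect to be most delicate is the sharp frontier upper bound at $t\in\widehat{E}_h$. It relies on a \emph{minimum-separation} estimate: the expected number of ordered pairs of jumps of size $\geq 2^{-m(1+\eps)}$ in $\ivff{0,1}$ at mutual distance at most $2^{-2m\beta(1+\eps)^2}$ is summable in $m$, so by Borel-Cantelli almost surely no such pair exists for $m$ large. Given $t\in\widehat{E}_h$, I select an approximating jump $t_n$ with $\norm{\Delta Y_{t_n}}\geq 2^{-m_n}$ and $\abs{t-t_n}\leq 2^{-m_n/(h(1+\eps))}$, and take $\rho_n$ equal to half the distance from $t_n$ to its nearest other jump of size $\geq 2^{-m_n(1+\eps)}$; by the minimum-separation bound, $\rho_n \geq c\cdot 2^{-2m_n\beta(1+\eps)^2}$. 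On $\ivff{t_n,t_n+\rho_n}$ only the jump at $t_n$ has size $\geq 2^{-m_n(1+\eps)}$, so the small-jump control from Lemma \ref{lemma:tech_lemma2} and the drift estimates \eqref{eq:ineq_fr2}--\eqref{eq:ineq_fr3} carry over unchanged and produce an increment $\geq C\rho_n\cdot 2^{-m_n}$ of $\int Y$ after polynomial subtraction. Inserting $(u,v)=(t_n,t_n+\rho_n)$ into Definition \ref{def:2ml_spaces}, and using $\abs{u-t}+\abs{v-t}\leq 3\cdot 2^{-m_n/(h(1+\eps))}$, produces the bound $\sigma_{Y,t}(s') \leq (1+s'/(h(1+\eps)))/(2\beta(1+\eps)^2)$, which tends to $(h+s')/(2\beta h)$ as $\eps\to 0$.
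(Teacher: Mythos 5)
Your argument follows essentially the same route as the paper's: you isolate a ``crowded'' exceptional set built from pairs of close jumps of comparable size, bound its dimension by $2\beta h-1$ via a first-moment/covering argument at each scale, show that outside it every approximating jump $t_n$ is isolated at scale $2^{-m_n\alpha}$ with $\alpha=1/(h(1+\eps))$ so that the computations of Lemma~\ref{lemma:2ml_levy2} give the classical frontier, and finally use a minimum-separation estimate to produce, even at $t\in\widehat{E}_h$, an interval of length roughly $2^{-2\beta m_n}$ adjacent to $t_n$ that is free of other large jumps, yielding the weaker bound $\sigma_{Y,t}(s')\leq\tfrac{h+s'}{2\beta h}$. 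The paper packages the same ideas slightly differently (it counts the inter-jump exponential gaps $e_{j,m}$ smaller than $2^{-m\alpha}$ rather than close pairs, and obtains the separation from Lemma~\ref{lemma:tech_lemma3} with $k=1$ and $\gamma>2\beta+7\eps$), but the decomposition, the dimension count and the final frontier computation are the same.

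The one step that fails as written is your minimum-separation estimate. With $N_m\approx\pi\pthb{D(2^{-m(1+\eps)},1)}\leq 2^{m\beta(1+\eps)^2}$ large jumps, the expected number of ordered pairs at mutual distance at most $2^{-2m\beta(1+\eps)^2}$ is of order $N_m^2\cdot 2^{-2m\beta(1+\eps)^2}=\mathrm{O}(1)$: it is bounded, not summable, so Borel--Cantelli does not give the almost sure absence of such pairs for large $m$, and the claimed bound $\rho_n\geq c\,2^{-2m_n\beta(1+\eps)^2}$ is not available (for a genuinely $\beta$-stable-like measure such close pairs do occur at infinitely many scales). The repair is immediate and is exactly what Lemma~\ref{lemma:tech_lemma3} with $k=1$ provides: take the separation threshold $2^{-m\gamma}$ with $\gamma$ \emph{strictly} larger than $2\beta(1+\eps)^2$, so the expected pair count $2^{m(2\beta(1+\eps)^2-\gamma)}$ is summable; since your final exponent is obtained in the limit $\eps\to 0$ anyway, letting also $\gamma\downarrow 2\beta$ recovers $\sigma_{Y,t}(s')\leq\tfrac{h+s'}{2\beta h}$ unchanged. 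With that correction the proposal is sound.
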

\begin{proof}
  On the contrary to the previous lemma, we know that some oscillating singularities might appear at a given time $t$. Hence, the first step in this proof is to isolate these behaviours and estimate the fractal dimension of the corresponding set of times.

  For that purpose, let us set $\delta\in\ivoo{\beta,2\beta}$ and $\eps>0$. We are interested in the \emph{double-jump} configurations, i.e. when two jumps greater than $2^{-m(1+\eps)}$ are sufficiently close. More precisely, suppose $I$ is an interval of size $2^{-m\delta+1}$ and $p_m$ designates the probability of obtaining at least two jumps greater than $2^{-m(1+\eps)}$ inside $I$. Then,
  \[
    c\, 2^{-m2\delta}\cdot 2^{m2\beta} \leq p_m = \prb{ J\pthb{I,D(2^{-m(1+\eps)},1)} \geq 2 } \leq C\, 2^{-m2\delta}\cdot 2^{m2\beta(1+2\eps)}
  \]
  where we may assume that $\delta > \beta(1+2\eps)$. We consider $\ceil{2^{m\delta}}$ consecutive, but disjoint, intervals of size $2^{-m\delta+1}$ which are sufficient to cover $\ivff{0,1}$. Then, if we denote by $N^1_m$ the number of intervals with the previous configuration, it follows a Binomial distribution of parameters $\ceil{2^{m\delta}}$ and $p_m$. Moreover, Chernoff's inequality induces that
  \[
    \prb{ N^1_m \geq c_0\, 2^{-m\delta+m2\beta(1+2\eps)} } \leq \exp\pthb{-c \,2^{-m\delta+m2\beta} },
  \]
  where $2\beta>\delta$. Let us consider now the same configurations of intervals translated by $2^{-m\delta}$ and denote by $N^2_m$ the corresponding Binomial random variable. Owing to the previous estimate and Borel--Cantelli lemma, with probability one, there exists $M(\omega)$ such that for every $m\geq M(\omega)$, $N^1_m \leq c_0\, 2^{-m\delta+m2\beta(1+2\eps)}$ and $N^2_m \leq c_0\, 2^{-m\delta+m2\beta(1+2\eps)}$. 

  Then, let $T_m$ index the previous intervals with a double-jump configuration and $F(\delta,\eps)$ designate the following set: 
  \[
    F(\delta,\eps) = \limsup_{m\rightarrow\infty} \bigcup_{I\in T_m} \ivffb{c(I)-2^{-m\delta+2},c(I)+2^{-m\delta+2}},
  \]
  where $c(I)$ denotes the center of any interval $I\in T_m$. Using a simple covering based on intervals of size $2^{-m\delta}$, we can obtain an upper bound of the Hausdorff dimension of $F(\delta,\eps)$. More precisely, for any $m_0\in\N$, the series
  \[
    \sum_{m=m_0}^{+\infty} c\,\abs{T_m} \cdot \pthb{2^{-m\delta}}^\gamma \leq c \sum_{m=m_0}^{+\infty} 2^{-m(\delta(1+\gamma)-2\beta(1+2\eps))},
  \]
  converges when $\delta(1+\gamma)>2\beta(1+2\eps)$, i.e. $\gamma > 2\beta(1+2\eps)/\delta - 1$. Therefore, $\dimH\, F(\delta,\eps) \leq 2\beta(1+2\eps)/\delta - 1$ almost surely.

  Let now set $h\in\ivoo{1/2\beta,1/\beta}$. We aim to prove that $\widehat{E}_h \subset F(\delta,\eps)$ for any $\delta < 1/h$ and $\eps>0$. For that purpose, we need to show that for every $t\in \widetilde{E}_h \setminus F(\delta,\eps)$, the 2-microlocal frontier at $t$ satisfies $\sigma_{Y,t}(s') \leq (h+s')$. As $t\in E_h$, there exist two sequences $(v_n)_{n\in\N}$ and $(m_n)_{n\in\N}$ such that 
  \[
    \forall n\in\N;\quad v_n\in B\pthb{ t,2^{-m_n / (h+\eps)} }\quad\text{ and }\quad \normb{\Delta Y_{v_n}} \geq 2^{-m_n}.
  \]
  We may assume that $\eps$ is sufficiently small to satisfy $2^{-m_n / (h+\eps)} \leq 2^{-m\delta}$, i.e. $\delta < 1/(h+\eps)$. Furthermore, since $t\notin F(\delta,\eps)$, for every $m$ sufficiently large, there is no double-jump configuration in the neighbourhood of $t$ and $v_n$, meaning that $J\pthb{ B(v_n,2^{-m_n\delta}), D(2^{-m_n(1+\eps)},1) } = 1$: there does not exist other jump larger than $2^{-m_n(1+\eps)}$ in the neighbourhood of $v_n$. Therefore, we obtain the configuration presented in the proof of Lemma~\ref{lemma:2ml_levy2}, and as the latter remains valid, we have
  \[
    \forall s'\in\R;\quad \sigma_{Y,t}(s') \leq (h+s').
  \]
  This upper bound shows that $\widehat{E}_h \subset F(\delta,\eps)$, and considering the limits $\delta\rightarrow 1/h$ and $\eps\rightarrow 0$, it induces the inequality $\dimH\,\widehat{E}_h \leq 2\beta h - 1$. Furthermore, since $2\beta h - 1 < \beta h$ and $E_h=\widetilde{E}_h\cup\widehat{E}_h$, we have also proved that $\dimH\,\widehat{E}_h = \beta h$.\vsp

  To conclude this lemma, we obtain an upper bound of the 2-microlocal frontier in the case $t\in\widehat{E}_h$. Since the sketch of the proof is similar to Lemmas~\ref{lemma:2ml_levy1} and \ref{lemma:2ml_levy2}, we only present the main elements. Still using the previous two sequences $(v_n)_{n\in\N}$ and $(m_n)_{n\in\N}$, Lemma~\ref{lemma:tech_lemma3} induces that
  \[
    J\pthb{B(v_n, 2^{-m_n 2\beta(1+3\eps)}),D(2^{-m_n(1+\eps)},1)} = 1.
  \]
  Then, using the methodology presented in Lemma~\ref{lemma:2ml_levy2}, there exists $(s_n)_{n\in\N}$ such that for every $n\in\N$, $s_n\in B(t,2^{-m_n / (h+\eps)})$ and 
  \begin{align*}
    \norm{ Y(s_n,1) } 
    \geq c\, 2^{-m_n}\cdot 2^{-m_n 2\beta(1+3\eps)}
    \geq \abs{t-s_n}^{(h+\eps)(1+2\beta(1+3\eps))},
  \end{align*}
  Hence, $\alpha_{Y(\cdot,1),t} \leq (h+\eps)(1+2\beta(1+3\eps)) \rightarrow_{\eps\rightarrow 0} h(1+2\beta)$, and using the reasoning presented in Lemma~\ref{lemma:2ml_levy2}, we obtain $\sigma_{Y,t} \leq (h+s') / 2\beta h$ for all $s'\in\R$. 
\end{proof}

Before finally proving Theorem~\ref{th:2ml_levy} and its corollaries, we recall the following result on the increments of a Brownian motion. The proof can be found in \cite{Adler-1981} (inequality $(8.8.26)$).
\begin{lemma} \label{lemma:bm_incr}
  Let $B$ be a $d$-dimensional Brownian motion. There exists an event $\Omega_0$ of probability one such that for all $\omega\in\Omega_0$, $\eps>0$, there exists $h(\omega)>0$ such that for all $\rho\leq h(\omega)$ and $t\in\ivff{0,1}$, we have
  \[
    \sup_{u,v\in B(t,\rho)} \brcb{ \norm{B_u - B_v} } \geq \rho^{1/2 + \eps}.
  \]
\end{lemma}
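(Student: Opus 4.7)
The plan is to reduce the claim to a lower bound on the oscillation of a single coordinate of $B$, then combine a Gaussian small-ball estimate on short subintervals with independence of increments and a dyadic covering argument, followed by a Borel-Cantelli step. Since $\|B_u - B_v\| \geq |B^{(1)}_u - B^{(1)}_v|$, where $B^{(1)}$ is the first coordinate, it suffices to prove the statement for a one-dimensional Brownian motion.

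First I would handle a fixed dyadic scale. Fix $\varepsilon > 0$ and $n \in \N$; subdivide each dyadic interval $I_{n,k} = \ivff{k 2^{-n}, (k+1) 2^{-n}}$ into $N_n = \lceil n/\varepsilon \rceil$ consecutive pieces of length $2^{-n}/N_n$. The event
\[
  \sup_{u,v \in I_{n,k}} \absb{ B^{(1)}_u - B^{(1)}_v } < 2^{-n(1/2+\varepsilon)}
\]
implies that each of the $N_n$ successive increments of $B^{(1)}$ across these pieces has absolute value bounded by $2 \cdot 2^{-n(1/2+\varepsilon)}$. Since these increments are independent and centered Gaussian with variance $2^{-n}/N_n$, each has probability at most $C \sqrt{N_n}\, 2^{-n\varepsilon}$ of lying in that window (the Gaussian density is bounded near the origin). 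By independence, the joint probability is at most $\pthb{ C \sqrt{N_n} \, 2^{-n\varepsilon} }^{N_n}$, which decays faster than any polynomial in $2^{-n}$.

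Next I would apply Borel-Cantelli across all dyadic intervals and scales. Summing the bound over the $2^n$ dyadic intervals at scale $n$ and then over $n$ still gives a convergent series, so almost surely there exists $N(\omega)$ such that for all $n \geq N(\omega)$ and all $k$,
\[
  \sup_{u,v \in I_{n,k}} \norm{ B_u - B_v } \geq 2^{-n(1/2+\varepsilon)}.
\]
Intersecting over a countable sequence $\varepsilon_m \downarrow 0$ gives a single full-measure event.

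The final step is to extend from dyadic intervals to arbitrary balls $B(t,\rho)$. Given $\rho$ sufficiently small, choose $n$ with $2^{-n-2} \leq \rho \leq 2^{-n-1}$; then $B(t,\rho)$ contains some dyadic interval $I_{n+3,k}$ of length $2^{-n-3}$. Applying the bound obtained on that dyadic interval (with $\varepsilon$ replaced by $\varepsilon/2$) yields
\[
  \sup_{u,v \in B(t,\rho)} \norm{ B_u - B_v } \geq 2^{-(n+3)(1/2+\varepsilon/2)} \geq \rho^{1/2+\varepsilon},
\]
for $\rho$ small enough depending on $\omega$ and $\varepsilon$. The main obstacle is calibrating the number of subdivisions $N_n$ so that the per-interval Gaussian bound, once raised to the $N_n$-th power, beats both the combinatorial factor $2^n$ and stays summable in $n$; the logarithmic choice $N_n \asymp n/\varepsilon$ comfortably achieves this.
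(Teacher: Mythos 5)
Your proof is correct. Note that the paper does not actually prove this lemma: it is disposed of by a one-line citation to inequality $(8.8.26)$ in Adler's monograph \cite{Adler(1981)}. Your argument is therefore a genuine self-contained substitute, and it follows the classical route for lower bounds on the oscillation of Brownian motion: project onto one coordinate, split each dyadic interval of length $2^{-n}$ into $N_n=\lceil n/\eps\rceil$ pieces so as to get independent Gaussian increments, bound the probability that all of them are small by a product of small-ball probabilities (each of order $\sqrt{N_n}\,2^{-n\eps}$, so the product is of order $e^{-n^2\log 2\,(1+o(1))}$, which comfortably beats the $2^{n}$ union bound over dyadic intervals), apply Borel--Cantelli, and pass from dyadic intervals to arbitrary balls by inclusion at the cost of replacing $\eps$ by $\eps/2$ and taking $\rho$ small; the final comparison $2^{-(n+3)(1/2+\eps/2)}\ge\rho^{1/2+\eps}$ indeed holds once $n\ge 2/\eps+1$ or so. Two points are worth making explicit in a polished write-up: first, the countable intersection over $\eps_m\downarrow 0$ covers all $\eps>0$ only because the conclusion is monotone in $\eps$, i.e.\ $\rho^{1/2+\eps}\le\rho^{1/2+\eps_m}$ whenever $\eps\ge\eps_m$ and $\rho\le 1$; second, the dyadic subinterval you extract from $B(t,\rho)$ should be taken inside $\ivff{0,1}\cap B(t,\rho)$ (possible since this intersection has length at least $\rho\ge 2\cdot 2^{-n-3}$) so that it is one of the intervals covered by the Borel--Cantelli event. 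With those details spelled out, your proof is a complete and elementary replacement for the external reference.
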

\begin{proof}[Proof of Theorem \ref{th:2ml_levy}]
  We use the notations introduced at the beginning of the section. As previously said, the compound Poisson process $N$ can be ignored since it does not influence the final regularity. 
  Furthermore, if $Q = 0$, and therefore $B\equiv0$ and $\beta' = \beta$, Lemmas \ref{lemma:2ml_levy1}, \ref{lemma:2ml_levy2} and \ref{lemma:2ml_levy3} on the component $Y$ yields Theorem \ref{th:2ml_levy}. 
  
  Otherwise, the L\'evy process $X$ corresponds to the sum of the Brownian motion $B$ and the jump component $Y$. Still using Lemmas \ref{lemma:2ml_levy1}, \ref{lemma:2ml_levy2} and \ref{lemma:2ml_levy3}, it is sufficient to prove that with probability one and for all $t\in\ivff{0,1}$, $\sigma_{X,t} = \sigma_{B,t} \wedge \sigma_{Y,t}$. Owing to the definition of 2-microlocal frontier, we already know that $\sigma_{X,t} \geq \sigma_{B,t} \wedge \sigma_{Y,t}$. Furthermore, when $\sigma_{B,t}(s') \neq \sigma_{Y,t}(s')$, the upper bound is straightforward, and thus $\sigma_{X,t}(s') = \sigma_{B,t}(s') \wedge \sigma_{Y,t}(s')$. 

  Therefore, to obtain Theorem~\ref{th:2ml_levy}, we have to prove that with probability one, for all $t\in\ivff{0,1}$, $\sigma_{X,t} \leq \sigma_{B,t} = s'\mapsto\pthb{1/2 + s'}\wedge 1/2$. For that purpose, we distinguish two different cases.
  \begin{enumerate}[ \it 1.]
    \item If $\beta' = \beta = 2$, we only need to slightly modify the proof of Lemma~\ref{lemma:2ml_levy1}. More precisely, owing to Lévy's modulus of continuity, the increments of the Brownian motion satisfy:
    \[
      \forall u,v : \abs{u-v}\leq 2^{-m_n\gamma};\quad \norm{B_u-B_v} \leq c\,m_n 2^{-m_n\gamma/2} = c\,m_n 2^{-m_n(1+4\eps)},
    \]
    since $\gamma=\beta(1+4\eps)$. Therefore, the term due to the increments of the Brownian motion does not influence the rest of the estimates presented in the proof, ensuring that $\sigma_{X,t}(s') \leq \pth{ 1/2 + s' }\wedge 0$, for all $s'\in\R$.\vsp
    
    \item If $\beta < 2$, let $\delta = 2$ and $\eps > 0$. According to Lemma~\ref{lemma:tech_lemma3}, there exist $k\in\N$ and $M(\omega)\in\N$ such that for all $m\geq M$, there are at most $k$ jumps of size greater than $2^{-m(1+3\eps)}$ in any interval of size $2^{-m\delta}$. Hence, there always exists a sub-interval $I$ of size $c_0\,2^{-\delta m}$ with no jump greater than $2^{-m(1+3\eps)}$ inside.
    
    Still using Lemma~\ref{lemma:tech_lemma2}, we know that for all $m\geq M(\omega)$
    \[
      \forall u,v\in\ivff{0,1} : \abs{u-v}\leq 2^{-\delta m};\quad  \normb{ Y^{m(1+3\eps)}_u - Y^{m(1+3\eps)}_v } \leq c_1 m 2^{-m(1+3\eps)}.
    \]
    Let us set $t\in\ivff{0,1}$ and $I$ be one of the previous interval of size $c_0\,2^{-\delta m}$. According to Lemma~\ref{lemma:bm_incr}, there exist $u,v\in I$ such that $\norm{B_u-B_v}\geq c_0\,2^{-m(1+2\eps)}$. Then, 
    \begin{align*}
      \norm{ X_u - X_v } \geq \norm{B_u - B_v} - \normb{ Y^{m(1+3\eps)}_u - Y^{m(1+3\eps)}_v } - \abs{u-v}\cdot\normbb{ \int_{D(2^{-m(1+3\eps)},1)} x\, \pi(\dt x) },
    \end{align*}
    where $\abs{u-v}\cdot\normb{ \int_{D(2^{-m(1+3\eps)},1)} x\, \pi(\dt x) } \leq c \,2^{-m(1+3\eps)}$. Hence, we obtain a lower bound of the increments on the interval $I$, ensuring that the rest of the proof presented in Lemma~\ref{lemma:tech_lemma3} holds similarly.
  \end{enumerate}
\end{proof}
\begin{proof}[Proof of Corollary \ref{cor:2ml_levy_scalings}]
  Recall that $\beta^w_{X,t} = \lim_{s'\rightarrow-\infty} \sigma_{X,t}(s') - s'$. Hence, using the global upper bound on the 2-microlocal frontier proved in Theorem~\ref{th:2ml_levy}, we know that $\beta^w_{X,t}\leq 1 / \beta'$ with probability one. In addition, owing to the geometrical properties of the frontier, we observe that for every $h\in\ivff{0,1/\beta'}$
  \begin{equation*}
    \forall h\in\ivfo{0,1/\beta'};\quad \widetilde{E}_h  \subseteq E^w_h \subseteq \widetilde{E}_h \cup \bigcup_{h'<h} \widehat{E}_{h'}.
  \end{equation*}
  The first inclusion clearly shows that $\dimH(E^w_h\cap V)\geq \dimH(\widetilde{E}_h\cap V) = \beta h$. In addition, we also know that for every $h'<h$, $\dimH\,\widehat{E}_{h'} \leq 2\beta h'-1 < \beta h$, which proves the other side inequality.

  To obtain the upper bound on the oscillating exponent, we only need to note that according to its characterisation using the 2-microlocal frontier, 
  \begin{equation*}
    \forall h\in\ivfo{0,1/\beta'};\quad  \brcb{ t\in E_h : \beta^o_{X,t} > 0 } = \widehat{E}_{h}.
  \end{equation*}
  Finally, the chirp exponent is equal to one because of the upper bound $\sigma_{X,t}(s')\leq \pthb{1/\beta'+s'}$. 
\end{proof}
\begin{proof}[Proof of Corollary \ref{cor:2ml_levy}]
  Owing to upper bound on the 2-microlocal frontier obtained in Theorem~\ref{th:2ml_levy}, the case $\sigma = 0$ corresponds to the classic spectrum of singularity. Hence, let us set $\sigma<0$. We recall that $s$ denotes the parameter $\sigma-s'$. If $s\geq1/\beta'$ or $s<0$, the result is straight forward using Theorem \ref{th:2ml_levy} and properties of the 2-microlocal frontier.
  
  Therefore, we suppose that $s\in\ivfo{0,1/\beta'}$ and note that $E_{\sigma,s'} = \brc{t\in\R_+ : \sigma_{X,t}(s') = \sigma }$, since the negative component of the 2-microlocal frontier of $X$ can not be constant. Hence, similarly to the previous corollary, $E_{\sigma,s'}$ satisfies
  \begin{equation*}
    \forall s\in\ivfo{0,1/\beta'};\quad \widetilde{E}_s  \subseteq E_{\sigma,s'} \subseteq \widetilde{E}_s \cup \bigcup_{h<s} \widehat{E}_{h}.
  \end{equation*}
  These two inclusions lead to the same estimates, and therefore the expected equality on the Hausdorff dimension.
\end{proof}


\subsection{Oscillating singularities of some classes of Lévy processes}  \label{ssec:2ml_alpha}

In this section, we aim to understand more precisely the oscillating singularities of Lévy processes captured by the collection of sets $(\widehat{E}_h)_{h\in\R_+}$. Note that to simply our presentation, we assume that $d=1$.

Let us begin with the proof of Proposition~\ref{prop:2ml_levy_oneside} where we present a class Lévy processes with no chirp oscillations. Recall that in this case, we consider Lévy measures such that $\pi(\R_\pm) = 0$.

\begin{proof}[Proof of Proposition~\ref{prop:2ml_levy_oneside}]
  In order to prove that $\widehat{E}_h = \emptyset$ for all $h\in\R_+$, we extend Lemma~\ref{lemma:2ml_levy2} to any $h\in\ivfo{0,1/\beta}$. We may assume without any loss of generality that $\pi(\R_-) = 0$. We still consider the two sequences $(v_n)_{n\in\N}$ and $(m_n)_{n\in\N}$ such that 
  \[
    \forall n\in\N;\quad v_n\in B\pthb{ t,2^{-m_n / (h+\eps)} }\quad\text{ and }\quad \absb{\Delta Y_{v_n}} \geq 2^{-m_n}.
  \]
  where we suppose that $v_n\geq t$ and $Y$ designates the jump component. In addition, we first assume that $\beta\geq 1$ and $Y(v_n,0)\geq 2^{-m_n-1}$, and we set $t_n = v_n$ and $\rho_n = 2^{-m_n\delta}$. Then, since the Lévy process only has positive jumps, for all $u\in\ivfo{t_{n},t_{n}+\rho_{n}}$,
  \begin{align*}
    \abs{Y(u,0)} \geq \abs{Y(v_n,0)} - \absb{ Y_u^{m_n(1+\eps)}-Y_{v_n}^{m_n(1+\eps)} } - \absbb{(u-v_n) \int_{D(2^{-m_n(1+\eps)},1)} x \,\pi(\dt x)},
  \end{align*}
  According to the proof presented in Lemma~\ref{lemma:2ml_levy2}, this inequality is sufficient to show that $\sigma_{Y,t}(s')\leq\pth{\alpha_{X,t}+s'}\wedge 0$. The cases $Y(v_n,0)\leq 2^{-m_n-1}$ and $\beta \leq 1$ are then treated similarly, proving that the 2-microlocal frontier of the process $X$ is equal to $\pth{\alpha_{X,t}+s'}\wedge 0$.
\end{proof}
Proposition~\ref{prop:2ml_levy_oneside} proves in particular that Lévy subordinators, in which case $\beta \leq 1$, only have cusp singularities.\vsp

The second important class of Lévy processes we consider are characterised by the following Lévy measure
\begin{align*}
  \pi(\dt x) = a_1 \,\abs{x}^{-1-\alpha_1} \,\indi_{\R_+}\dt x + a_2 \,\abs{x}^{-1-\alpha_2} \,\indi_{\R_-}\dt x,
\end{align*}
where $a_1,a_2 > 0$ and $\alpha_1,\alpha_2\in\ivoo{0,2}$. 

The proof of Theorem~\ref{th:2ml_alpha} is rather technical and will be divided in several parts for the sake of readability. To begin with, we present two simple technical lemmas related to the Binomial distribution. Recall that Chernoff's inequality states that for any $\eps\in\ivoo{0,1}$,
\begin{equation}  \label{eq:ineq_chernoff1}
  \prb{ N \leq np(1-\eps) } \leq \exp\pthb{-np \,\eps^2 / 2}.
\end{equation}
and
\begin{equation}  \label{eq:ineq_chernoff2}
  \prb{ N \geq np(1+\eps) } \leq \exp\pthb{-np \,\eps^2 / 2}.
\end{equation}
where $N$ follows a Binomial distribution of parameters $n$ and $p$.
\begin{lemma}  \label{lemma:binomial1}
  Suppose $N$ follows a Binomial distribution with parameters $n$ and $p$. Then, there exists $c>0$ such that when $n>c$ and $p<1/c$,
  \begin{align*}
    &\prB{ \#\brcb{\text{empty intervals of size} \geq 1/p \LL(1/p) } \geq np \LL(1/p) h(p)^4 } \\
    &\geq 1 - \exp\pthb{-np / 8 \LL(1/p)^2},
  \end{align*}
  using the notations: $\LL(1/p)\eqdef\log(\log(1/p))$ and $h(p) \eqdef\exp(-1/\LL(p))$.
\end{lemma}
\begin{proof}
  To obtain this lower bound, we first estimate the probability of obtaining an empty interval of size of $l_0 = \floor{1/p\LL(1/p)}$. Setting $p_0\eqdef (1-p)^{l_0}$, we note that
  \[
    \log(p_0) \geq \frac{1}{p\LL(1/p)} \log(1-p) \geq \frac{-2}{\LL(1/p)}.
  \]
 Therefore, $p_0 \geq h(p)^2$  when $c$ is sufficiently large. 

  Let $n_0$ denote the number of disjoint sub-intervals of size $l_0$ and $N_0$ be a r.v. following a Binomial distribution of parameters $n_0$ and $p_0$. Owing to Chernoff's inequality,
  \[
    \prb{ N_0 \geq n_0p_0 \,h(p) } \geq 1 - \exp\pthb{ -n_0p_0 \,g(p)^2 /2 },
  \]
  where $g(p)\eqdef 1-h(p)$. Note that $n_0 \geq np \LL(1/p) h(p)$ and $g(p)\asymp 1/\LL(1/p)$ when $c$ is large enough. Therefore, using the previous estimates,
  \[
    \prb{ N_0 \geq np \LL(1/p) h(p)^4} \geq 1 - \exp\pthb{ -np / 8 \LL(1/p)^2 },
  \]
  proving the lemma.
\end{proof}

\begin{lemma}  \label{lemma:binomial2}
  Suppose $N$ follows a Binomial distribution with parameters $n$ and $p$. Then, there exists $c>0$ such that when $n>c$ and $p<1/c$
  \[
    \prB{ \#\brcb{\text{successes spaced by} \geq 1/p } \geq np / 6 } \geq 1 - \exp\pthb{-np/16}.
  \]
\end{lemma}
\begin{proof}
  The sketch of the proof is similar to Lemma~\ref{lemma:binomial1}. The set $\brc{1,\ldots,n}$ can be divided in $n_0$ intervals of size $l_0 = \ceil{1/p}$. The probability $p_0$ of obtaining at least a success in one of these intervals is equal to:
  \[
    p_0 = 1 - (1-p)^{1/p} \longrightarrow_{ p\rightarrow 0} 1 - \e^{-1} \geq 1/2.
  \]
  Furthermore, only considering one third of the previous intervals, i.e. $n_0/3$, we consider the Binomial distribution $B(n_0/3, p_0)$. Still using Chernoff's inequality, we obtain
  \[
    \prb{ N_0 \geq np / 6 } \geq 1 - \exp\pthb{ -np/16 },
  \]
  which concludes the proof.
\end{proof}

\begin{proof}[Proof of Theorem \ref{th:2ml_alpha}]
  As observed in the proofs of Theorem~\ref{th:2ml_levy} and Proposition~\ref{prop:2ml_levy_oneside}, chirp singularities appear when a compensation phenomena between jumps exists. Hence, the main goal of the proof is to characterise in more details this particular behaviour in the case of the Lévy measure considered. 
  Firstly, we clearly note the Blumenthal--Getoor exponent $\beta$ of $\pi$ is equal to $\max\pth{\alpha_1,\alpha_2}$. \vsp
 
  \noindent\textbf{Hausdorff dimension (upper-bound).} To obtain a tighter upper bound of the Hausdorff dimension, we need to enhance the estimates presented in Lemma~\ref{lemma:2ml_levy3}. We have observed in the proof of Proposition~\ref{prop:2ml_levy_oneside} that oscillating singularities do not appear when there are jumps of the same sign. Hence, we are interested in the \emph{double-jump} configurations with jumps of opposite signs.

  Suppose $\delta\in\ivoo{\beta,\alpha_1+\alpha_2}$, $\eps>0$ and $I$ is an interval of size $2^{-j\delta+1}$. We are interested in the following type of configurations: $J\pthb{I, \ivof{2^{-j(1+\eps)}, 1}} \geq 1$ and $J\pthb{I, \ivfo{-1,-2^{-j(1+\eps)}}} \geq 1$. The probability $p_j$ of such an event satisfies:
  \[
    p_j \asymp 2^{-j\delta}2^{j\alpha_1(1+\eps)}\cdot 2^{-j\delta}2^{j\alpha_2(1+\eps)} = 2^{-j2\delta+j(\alpha_1+\alpha_2)(1+\eps)}.
  \]
  Using this probability, the rest of the proof is rather similar to Lemma~\ref{lemma:2ml_levy3}. We consider $\ceil{2^{j\delta}}$ consecutive, but disjoint, intervals of size $2^{-j\delta+1}$ sufficient to cover $\ivff{0,1}$ and we denote by $N^1_j$ the number of intervals with the previous configuration. Owing to Chernoff's inequality,
  \[
    \prb{ N^1_j \geq c_0\, 2^{-j\delta+j(\alpha_1+\alpha_2)(1+\eps)} } \leq \exp\pthb{-c \,2^{-j\delta+j(\alpha_1+\alpha_2)} },
  \]
  where $(\alpha_1+\alpha_2)>\delta$. Consider now the same configurations of intervals translated by $2^{-j\delta}$ and denote by $N^2_j$ the corresponding Binomial random variable. Using Borel--Cantelli lemma, with probability one, there exists $M(\omega)$ such that for every $j\geq M(\omega)$, $N^1_j \leq c_0\, 2^{-j\delta+j(\alpha_1+\alpha_2)(1+\eps)}$ and $N^2_j \leq c_0\, 2^{-j\delta+j(\alpha_1+\alpha_2)(1+\eps)}$. Using the same notation $T_j$, we define
  \[
    F(\delta,\eps) = \limsup_{j\rightarrow\infty} \bigcup_{I\in T_j} \ivffb{c(I)-2^{-j\delta+2},c(I)+2^{-j\delta+2}},
  \]
  where $c(I)$ denotes the center of any interval $I\in T_j$. Then, for any $j_0\in\N$
  \[
    \sum_{j=j_0}^{+\infty} c\,\abs{T_j} \cdot \pthb{2^{-j\delta}}^\gamma \leq c \sum_{j=j_0}^{+\infty} 2^{-j(\delta(1+\gamma)-(\alpha_1+\alpha_2)(1+\eps))},
  \]
  is finite when $\delta(1+\gamma)>(\alpha_1+\alpha_2)(1+\eps)$, i.e. $\gamma > (\alpha_1+\alpha_2)(1+\eps)/\delta - 1$. Therefore, $\dimH\, F(\delta,\eps) \leq (\alpha_1+\alpha_2)(1+\eps)/\delta - 1$.

  The rest of the proof of Lemma~\ref{lemma:2ml_levy3} does not change, proving that for any $\delta < 1 / h$ and $\eps>0$, $\widehat{E}_h\subset F(\delta,\eps)$. Therefore, with probability one,
  \[
    \forall h\in\ivoob{1/(\alpha_1 + \alpha_2), 1/\beta};\quad \dimH \,\widehat{E}_h \leq (\alpha_1+\alpha_2)h - 1.
  \]
  Finally, when $h\notin\ivoob{1/(\alpha_1 + \alpha_2), 1/\beta}$, the proof of Lemma~\ref{lemma:2ml_levy2} can also be similarly adapted to prove that $\widehat{E}_h=\vset$ with probability one.
  \vsp

  \noindent\textbf{Construction (lower-bound).} In order to prove the lower bound of the Hausdorff dimension, we need to construct a proper set of times with singularities.
  
  For our construction procedure, we will need a set of parameters $\bm{p} = (\delta,\delta',\delta'',\gamma,\rho)$ such that $\delta'<\delta<\delta''\in\ivoo{\beta,\alpha_1+\alpha_2}$, $\delta<\gamma\in\ivoo{\beta,\alpha_1+\alpha_2}$, $\delta' < \sqrt{\beta\delta}$ and $\delta > \sqrt{\delta'\delta''}$. In addition, we also define the sequence $j_n = (\delta/\delta')^n\rightarrow\infty$.\vsp

  The first step consists in constructing collections of intervals such that for every $t$ inside, there is no jump of size $2^{-j}$ or greater closer than $2^{-j\delta}$ for all $j\leq j_0$, where $j_0$ is a given index. More precisely, we define by induction a collection, indexed by the random variables $\Si_n$, of disjoint intervals of size $2^{-j_n\delta}$ in the following way. Suppose $\Si_n$ is defined such that for every $t$ inside an interval, there is no jump greater than $2^{-j_{n+1}}$ closer than $2^{-j_{n}\delta}$ of $t$. In every interval of size $2^{-j_n\delta}$, we consider consecutive sub-intervals of size $2^{-j_{n+1}\delta}$ with no jumps greater than $2^{-j_{n+2}}$ inside. Removing the left and right elements of these collections, we obtain the family $\Si_{n+1}$, which corresponds to the offspring of $\Si_{n}$. Owing to this construction procedure, we know that the remaining intervals satisfy the expected property, i.e. for any $t$ inside, there is no jump greater than $2^{-j_{n+2}}$ closer than $2^{-j_{n+1}\delta}$. 

  In order to determine the number of this type of intervals, we need to estimate the law of $\abs{\Si_{n+1}}$ conditionally to $\abs{\Si_{n}}$. For any $n\in\N$, let us denote by $p_{n+1}$ the probability of obtaining at least one jump greater $2^{-j_{n+2}}$ inside an interval of size $2^{-j_{n+1}\delta}$. Note that for every $n$ sufficiently large, an interval of size $2^{-j_n\delta}$ can be divided in at least $2^{-j_n \delta} / 2^{-j_{n+1}\delta} h( p_{n+1} )= 2^{j_{n+1}(\delta-\delta') } h( p_{n+1} )$ sub-intervals. Furthermore, let $M_{n+1}$ designate the following random variable:
  \[
    M_{n+1} = \#\brcb{\text{family $\geq 1/p_{n+1} \LL(1/p_{n+1})$ of consecutive empty intervals of size } 2^{-j_{n+1}} }.
  \]
  According to Lemma~\ref{lemma:binomial1},
  \begin{align*}
    &\prcb{ M_{n+1} \geq s_0 \,2^{j_{n+1}\delta } p_{n+1} \LL(1/p_{n+1}) h(p_{n+1})^5 }{ \abs{\Si_{n}} \geq s_0 \,2^{j_{n}\delta} } \\
    &\geq 1 - \exp\pthb{-s_0 \,2^{j_{n+1}\delta } p_{n+1} / 8 \LL(1/p_{n+1})^2},
  \end{align*}
  for any $s_0\in\R_+$ such that $s_0 \,2^{j_{n}\delta} \geq 1$.
  As previously outlined, for every collection of consecutive empty intervals, we remove the extremal elements to constitute the family $\Si_{n+1}$. Noting that $1 / p_{n+1} \LL(1/p_{n+1}) - 2 \geq h(p_{n+1}) / p_{n+1} \LL(1/p_{n+1})$ for any $n$ sufficiently large, we therefore obtain
  \begin{align*}
    &\prcb{ \abs{\Si_{n+1}} \geq s_0 \,2^{j_{n+1}\delta } h(p_{n+1})^6 }{ \abs{\Si_{n}} \geq s_0 \,2^{j_{n}\delta} } \\
    &\geq 1 - \exp\pthb{-s_0 \,2^{j_{n+1}\delta } p_{n+1} / 8 \LL(1/p_{n+1})^2}.
  \end{align*}
  Furthermore, the probability of obtaining an empty interval of size $2^{-j_{n+1}\delta}$ is equal to:
  \begin{align*}
    q_{n+1} 
    &\eqdef \prb{ J\pthb{ \ivff{0,2^{-j_{n+1}\delta}}\times D\pthb{2^{-j_{n+2}},2^{-j_{n+1}}} } = 0 } \\
    &\asymp \exp\pthb{ -c\, 2^{j_{n+2}\beta -j_{n+1}\delta} } 
    = \exp\pthb{ -c \,2^{-j_{n+1}\delta(1 - \beta/\delta') } } \longrightarrow 1.
  \end{align*}
  Hence, $p_{n+1}=1-q_{n+1} \asymp 2^{-j_{n+1}\delta(1 - \beta/\delta') }$ for any $n$ sufficiently large, and there exists $c_1>0$ such that 
  \begin{align*}
    \prcb{ \abs{\Si_{n+1}} \geq s_0 \,2^{j_{n+1}\delta } h(p_{n+1})^6 }{ \abs{\Si_{n}} \geq s_0 \,2^{j_{n}\delta} } 
    \geq 1 - \exp\pthb{-c_1 \,s_0 \,2^{j_{n+1}\beta\delta/\delta' } / (n+1)^2 }.
  \end{align*}
  Furthermore, note $2^{j_{n+1}\beta\delta/\delta'-j_n\delta} = 2^{j_{n+1}(\beta\delta/\delta'-\delta')}$. Since we have assumed that $\delta'<\sqrt{\beta\delta}$ and $s_0 \,2^{j_{n}\delta}\geq 1$, there exists $r>0$ such that 
  \begin{align*}
    \prcb{ \abs{\Si_{n+1}} \geq s_0 \,2^{j_{n+1}\delta } h(p_{n+1})^6 }{ \abs{\Si_{n}} \geq s_0 \,2^{j_{n}\delta} } 
    \geq 1 - \exp\pthb{-2^{j_{n+1}r } }.
  \end{align*}
  Therefore, by induction, the law of $\abs{\Si_{n+m}}$ satisfies
  \begin{align*}
    \prcbb{ \abs{\Si_{n+m}} \geq s_0 \, 2^{j_{n+m}\delta} \prod_{k=1}^m h(p_{n+k})^6 }{ \abs{\Si_n} \geq s_0\,2^{j_{n}\delta} } 
    \geq \prod_{k=1}^m \pthB{ 1 - \exp\pthb{-2^{j_{n+k}r } } }.
  \end{align*}
  Finally, we note that $\prod_{k=1}^m h(p_{n+k})^6 \geq \exp\pthb{ -c \sum_{k=1}^m 1/(n+k) } \geq \exp\pthb{-c \log(n+m)}$, implying that
  \begin{align}
    \prcB{ \abs{\Si_{n+m}} \geq s_0 \, 2^{j_{n+m}\delta -c_2 \log(n+m) } }{ \abs{\Si_n} \geq s_0\,2^{j_{n}\delta} } 
    \geq \prod_{k=1}^m \pthB{ 1 - \exp\pthb{-2^{j_{n+k}r } } },
  \end{align}
  where $c_2$ is a constant independent of $n$ and $m$.\vsp

  The previous bound gives us an estimate of the probability of obtaining intervals without any jump in a given neighbourhood. Using this estimate, we will be able to construct our main collection of nested intervals indexed by $(T_\ell)_{\ell\in\N}$ such that a most scales $2^{-j_n}$, there is no jump in the neighbourhood, and at specific ones $2^{-j_{n(\ell)}}$, a particular double-jump configuration appears. To construct this collection, let us first define this sequence $\pthb{n(\ell)}_{\ell\in\N}$: 
  \[
    n(0) = 1 \quad\text{ and }\quad n(\ell+1) = 2^{n(\ell)}\quad \forall \ell\in\N.
  \]
  For every $\ell\in\N$, we are interested in the following type of configuration: in an interval of size $2^{-j_{n(\ell)}\delta} / 3$, there exist two jumps $\Delta X_{u}$ and $\Delta X_{v}$ of opposite sign inside the middle third and such that $\abs{u-v} \leq 2^{-j_{n(\ell)}\gamma}$, $\abs{\Delta X_{u}},\abs{\Delta X_{v}}\in\ivff{2^{-j_{n(\ell)}-1},2^{-j_{n(\ell)}}}$ and $\abs{\Delta X_{v}-\Delta X_{u}} \leq 2^{-j_{n(\ell)}\rho}$. Using the independence property on the Poisson measure $J$, the probability $r_{\ell}$ of the previous event can be lower bounded by
  \begin{align*}
    r_{\ell} 
    &\asymp \exp\pthb{ -c\, 2^{-j_{n(\ell)}\delta+j_{n(\ell)+1}\beta } } \cdot 2^{-j_{n(\ell)}(\delta-\alpha_1)} \cdot 2^{-j_{n(\ell)}(\gamma-\alpha_2-1+\rho)} \\
    &\asymp \, 2^{-j_{n(\ell)}(\delta+\gamma-\alpha_1-\alpha_2-1+\rho)}.
  \end{align*}
  
  The collection of intervals $T_\ell$ is constructed by induction. $T_0$ is initialised with the singleton corresponding to the interval $\ivff{0,1}$. Then, assuming $T_\ell$ is defined, for any $I\in T_{\ell}$, we consider the sub-intervals of size $2^{-j_{n(\ell+1)}\delta}$ with a double-jump configuration and with no jump in the neighbourhood at all intermediate scales $2^{-j_{n}}$, $n(\ell) < n < n(\ell+1)$. Note that if none satisfy the previous conditions, we avoid the extinction of the tree by selecting a single sub-interval of size $2^{-j_{n(\ell+1)}\delta}$.

  We aim to estimate the size of $T_{\ell+1}$ conditionally to $T_{\ell}$. For any $I\in T_{\ell}$, we denote by $c(I)$ the middle point between the two jump times inside $I$. Then, for every integer $k\in\ivff{j_{n(\ell)}\delta,j_{n(\ell)}\delta''}$, we want to estimate the number double-jumps configurations inside the interval of size $2^{-k}$:
  \begin{equation}  \label{eq:interval_Ik}
    I_k \eqdef \ivfob{c(I)-2^{-k}, c(I)-2^{-k-1}}\cup\ivofb{c(I)+2^{-k-1}, c(I)+2^{-k}}.
  \end{equation}
  We designate by $\Si_{n(\ell)+1,k}$ the number of sub-intervals of size $2^{-j_{n(\ell)+1}\delta}$ inside $I_k$ which are empty. Using Chernoff's inequality, the latter satisfies
  \begin{align*}
    \prb{ \abs{\Si_{n(\ell)+1,k}} \geq  2^{j_{n(\ell)+1}\delta-k-2 } }
    \geq 1 - \exp\pthb{ -2^{j_{n(\ell)+1}\delta-k-4} } \geq 1-\exp\pthb{ -2^{j_{n(\ell)+1}r} }.
  \end{align*}
  The last inequality is due to $j_{n(\ell)+1}\delta-k\geq j_{n(\ell)+1}\delta-j_{n(\ell)}\delta''>j_{n(\ell)+1}r$, if $r$ is sufficiently small. Therefore, using the estimates obtained previously,
  \begin{align*}
    \prB{ \abs{\Si_{n(\ell+1)-1,k}} \geq 2^{j_{n(\ell+1)-1}\delta - c_3 n(\ell)-k } } 
    &\geq \prod_{k=1}^{n(\ell+1)-1} \pthB{ 1 - \exp\pthb{-2^{j_{n(\ell)+k}r } } } \\
    &\geq \pthb{ 1 - \exp\pthb{-2^{j_{n(\ell)}r } } }^{n(\ell+1)},
  \end{align*}
  where $c_3$ is a positive constant and we recall that $n(\ell+1) = 2^{n(\ell)}$. An interval of size $2^{-j_{n(\ell+1)-1}\delta}$ can be divided in at least $2^{j_{n(\ell+1)}(\delta-\delta')-1}$ sub-intervals. Hence, if $M_{\ell,k}$ denotes the number of double-jump configurations existing among the sub-intervals of $\Si_{n(\ell+1)-1,k}$, Lemma~\ref{lemma:binomial2} and the estimate of $r_\ell$ induce that 
  \begin{align*}
    &\prB{ \abs{M_{\ell,k}} \geq 2^{-j_{n(\ell+1)}(\delta+\gamma-\alpha_1-\alpha_2-1+\rho)} \cdot 2^{j_{n(\ell+1)}\delta - c_3 n(\ell)-k - 4} } 
    \geq \pthb{ 1 - \exp\pthb{-2^{j_{n(\ell)}r } } }^{n(\ell+1)+1}.
  \end{align*}
  We observe that the intervals $I_k$ are disjoints for different integers $k$. Hence, the probability of the intersection of the previous event for every $k\in\ivff{j_{n(\ell)}\delta,j_{n(\ell)}\delta''}$ satisfies
  \begin{align*}
    &\prbb{ \bigcap_{k\geq j_{n(\ell)}\delta}^{j_{n(\ell)}\delta''} \abs{M_{\ell,k}} \geq 2^{j_{n(\ell+1)}(\alpha_1+\alpha_2-\gamma+1-\rho) - c_4 n(\ell)-k} } \\
    &\geq \pthb{ 1 - \exp\pthb{-2^{j_{n(\ell)}r } } }^{(n(\ell+1)+1) 2\beta j_{n(\ell)} },
  \end{align*}
  since we assume that $\delta'' \leq \alpha_1+\alpha_2\leq 2\beta$.
  The previous construction procedure leads to estimate of size of $T_{\ell+1}$. Therefore, conditionally to the event $\brcb{ \abs{T_\ell} \geq k_0 2^{j_{n(\ell)}(\alpha_1+\alpha_2-\gamma+1-\rho) - 2j_{n(\ell-1)}\delta} }$, we obtain
  \begin{align*}
    &\prcB{ \abs{T_{\ell+1}} \geq k_0 2^{j_{n(\ell+1)}(\alpha_1+\alpha_2-\gamma+1-\rho) - 2j_{n(\ell)}\delta} }{ \abs{T_\ell} \geq k_0 2^{j_{n(\ell)}(\alpha_1+\alpha_2-\gamma+1-\rho) - 2j_{n(\ell-1)}\delta} } \\
    &\geq \pthb{ 1 - \exp\pthb{-2^{j_{n(\ell)}r } } }^{(n(\ell+1)+1)\,2\beta j_{n(\ell)} \,2^{j_{n(\ell) c}} }
    \geq \pthb{ 1 - \exp\pthb{-2^{j_{n(\ell)}r } } }^{2^{j_{n(\ell)}c_5} }.
  \end{align*}
  For any $\ell_0\in\N$, we know that the construction ensures that $\abs{T_{\ell_0}}\geq 1$ almost surely. Hence, choosing $k_0 = 2^{-c_6 j_{n(\ell_0)}}$, with the proper constant $c_6$, we obtain that the following lower bound
  \begin{align*}
    &\prbb{ \bigcap_{\ell>\ell_0} \abs{T_{\ell}} \geq 2^{j_{n(\ell)}(\alpha_1+\alpha_2-\gamma+1-\rho) - 2j_{n(\ell-1)}\delta - c_6 j_{n(\ell_0)}  } } \geq \prod_{\ell>\ell_0} \pthb{ 1 - \exp\pthb{-2^{j_{n(\ell-1)}r } } }^{2^{j_{n(\ell-1)}c_5} }.
  \end{align*}
  Considering the logarithm of the right term, we observe 
  \begin{align*}
    \sum_{\ell>\ell_0} \log\pthb{ 1 - \exp\pthb{-2^{j_{n(\ell-1)}r } } }^{2^{j_{n(\ell-1)}c_5} } \geq -\sum_{\ell>\ell_0} 2^{j_{n(\ell-1)}c_5}\cdot\exp\pthb{-2^{j_{n(\ell-1)}r } } \longrightarrow_{\ell_0\rightarrow\infty} 0.
  \end{align*}
  Hence, the previous probability converges to $1$ for any set of parameters $\bm{p}=(\delta,\delta',\delta'',\rho)$. Since the family of events considered is increasing with $\ell_0$, it implies that almost surely there exists $\ell_0(\omega)$ such that $\abs{T_{\ell}} \geq 2^{j_{n(\ell-1)}(\alpha_1+\alpha_2-\gamma+1-\rho) - 2j_{n(\ell)}\delta - c_6j_{n(\ell_0)} }$ for all $\ell>\ell_0(\omega)$. Furthermore, owing to the construction procedure described previously, we also know that for any $I\in T_\ell$, every interval $I_{k}$ defined in Equation~\eqref{eq:interval_Ik} contains at least $2^{j_{n(\ell+1)}(\alpha_1+\alpha_2-\gamma+1-\rho) - c_4 n(\ell)-k}$ proper double-jump configurations.\vsp

  \noindent\textbf{Hausdorff dimension (lower-bound).} The previous estimates now allow us to study more precisely the oscillating behaviour of the Lévy process. Suppose $h\in\ivoo{1/(\alpha_1+\alpha_2),1/\beta}$ and $\bm{p}$ is a set of parameters such that $\delta < 1/h < \delta''$ and $1/h < \gamma$. Then, let us define the set of interest $G(h,\bm{p})$ as following:
  \begin{align}  \label{eq:set_Ghp}
    G(h,\bm{p}) 
    = \bigcap_{\ell>\ell_0} \bigcup_{I\in T_{\ell}} \brcB{ &\ivffb{ c(I)-2^{-j_{n(\ell)}/h+1},c(I)-2^{-j_{n(\ell)}/h-1} } \nonumber \\
    \cup &\ivffb{ c(I)+2^{-j_{n(\ell)}/h-1},c(I)+2^{-j_{n(\ell)}/h+1} } },
  \end{align}
  where $c(I)$ still denotes the middle point of any double-jump interval $I\in T_{\ell}$ and $\ell_0(\omega)$ corresponds to the random index previously defined. Owing to the construction of the tree $T$, we note that $G(h,\bm{p})$ corresponds to to the intersection of collections $T_{\ell,h}$ of nested intervals of size $3\cdot2^{-j_{n(\ell)}/h-1}$. Furthermore, according to the estimates obtained in the previous paragraph, we know that every $I\in T_{\ell,h}$ contains at least $2^{j_{n(\ell+1)}(\alpha_1+\alpha_2-\gamma+1-\rho) - c_4 n(\ell)-j_{n(\ell)}/h-1}$ sub-elements separated by $2^{j_{n(\ell+1)}(\gamma-\alpha_1-\alpha_2-1+\rho)}$.

  In order the estimate the Hausdorff dimension of the set $G(h,\bm{p})$, we construct by induction a mass measure $\mu$ on it. To begin with, $\mu_{\ell_0}$ attributes an equivalent weight on every interval $I\in T_{\ell_0,h}$. Then, similarly to the procedure on Cantor's set, $\mu_{\ell+1}$ is defined on the intervals $I\in T_{\ell+1,h}$ such that the weight $\mu_{\ell}(\Ii)$, $\Ii\in T_{\ell,h}$, is equally distributed on its offspring. The measure $\mu$ is then defined as  the limit of the sequence $(\mu_{\ell})_{\ell\geq\ell_0}$, which clearly exists since the cumulative distribution functions uniformly converge on $\ivff{0,1}$. 

  Since every $I\in T_{\ell,h}$ contains at least $2^{j_{n(\ell+1)}(\alpha_1+\alpha_2-\gamma+1-\rho) - c_4 n(\ell)-j_{n(\ell)}/h}$ elements,
  \begin{align*}
    \forall I\in T_{\ell,h};\quad\mu(I) 
    &\leq \prod_{k=\ell_0+1}^\ell 2^{-j_{n(k)}(\alpha_1+\alpha_2-\gamma+1-\rho) + c_4 n(k-1)+j_{n(k-1)}/h } \\
    &\leq 2^{-j_{n(\ell)}(\alpha_1+\alpha_2-\gamma+1-\rho) + c_7 j_{n(\ell-1)}},
  \end{align*}
  since we note that $n(\ell) \leq j_{n(\ell)}$ and $\sum_{k=1}^\ell j_{n(k)} \leq c\, j_{n(\ell)}$ for any $\ell\in\N$.

  As we aim to use the usual \emph{mass distribution principle} to determine a gauge function $g$ such that the Hausdorff measure $\Hi_g$ of $G(h,\bm{p})$ is positive, we need to obtain an upper bound of $\mu\pthb{ B(t,r) }$ for any $t\in\ivff{0,1}$ and $r>0$ sufficiently small. There exists $\ell\in\N$ such that $2^{-j_{n(\ell)}/h} \leq r < 2^{-j_{n(\ell-1)}/h}$ and without any loss of generality, we may assume that $\ell > \ell_0$. Furthermore, as $r < 2^{-j_{n(\ell-1)}/h}$, we may also suppose that $B(t,r)\subset \Ii$ where $\Ii\in T_{\ell-1,h}$ (otherwise, consider the intersection $B(t',r')$ between $B(t,r)$ and the closest element $\Ii$). 
  Since the sub-intervals $I\in T_{\ell,h}$, with $I\subset\Ii$ are separated by at least $2^{j_{n(\ell+1)}(\gamma-\alpha_1-\alpha_2-1+\rho)}$, we know that the ball $B(t,r)$ intersects with at most $r \, 2^{j_{n(\ell)}(\gamma-\alpha_1-\alpha_2-1+\rho)+1}$ of them. Hence, since $\mu(I)$ has the same value for every $I\in T_{\ell,h}$ with $I\subset \Ii$, we obtain
  \begin{align*}
    \mu\pthb{ B(t,r) } 
    &\leq r\, 2^{j_{n(\ell)}(\gamma-\alpha_1-\alpha_2-1+\rho)+1} \mu\pth{I} \quad \\
    &\leq r\,  2^{j_{n(\ell-1)}/h + c_4 n(\ell-1) + 1} \mu\pth{\Ii},
  \end{align*}
  as $\mu\pth{I}\leq \mu\pth{\Ii} / 2^{j_{n(\ell)}(\alpha_1+\alpha_2-\gamma+1-\rho) - c_4 n(\ell-1)-j_{n(\ell-1)}/h}$. In addition, we know that $\mu\pth{\Ii}\leq 2^{-j_{n(\ell-1)}(\alpha_1+\alpha_2-\gamma+1-\rho) + c_7 j_{n(\ell-2)}}$ and $j_{n(\ell-2)} \leq n(\ell-1)$, inducing 
  \[
    \mu\pthb{ B(t,r) } \leq r\, 2^{-j_{n(\ell-1)}(\alpha_1+\alpha_2-\gamma+1-\rho) + j_{n(\ell-1)}/h + c_8 n(\ell-1) },
  \]
  Furthermore, as $\gamma>\beta$, $1/h>\beta$ and $\rho>1$, $\alpha_1+\alpha_2-\gamma-1/h+1-\rho < 0$, $1 - (\alpha_1+\alpha_2-\gamma+1-\rho)h > 0$ and $r^{1 - (\alpha_1+\alpha_2-\gamma+1-\rho)h} \leq 2^{-j_{n(\ell-1)}/h + j_{n(\ell-1)}(\alpha_1+\alpha_2-\gamma+1-\rho))}$. Finally, since $j_{n(\ell-1)} = \pthb{\delta/\delta'}^{n(\ell-1)} \leq c \log\pthb{1/r}$, there exist $c_{9},c_{10} > 0$ such that for all $t\in\ivff{0,1}$ and $r>0$
  \begin{align*}
    \mu\pthb{ B(t,r) } \leq c_{9} \log\pth{1/r}^{c_{10}} \, r^{(\alpha_1+\alpha_2-\gamma+1-\rho)h}.
  \end{align*}
  Using the mass distribution principle (see \cite{Falconer-1986} for instance), this inequality proves that $G(h,\bm{p})$ has a positive $g$-Hausdorff measure, where the gauge function $g$ is defined by $g(r) = \log\pth{1/r}^{c_{10}} \,r^{(\alpha_1+\alpha_2-\gamma+1-\rho)h}$. 

  Therefore, if we restrict ourselves to rational parameters $\bm{p}$, we have proved that with probability one, for all $h\in\ivoo{1/(\alpha_1+\alpha_2),1/\beta}$, $\dimH\, G(h,\bm{p}) \geq (\alpha_1+\alpha_2-\gamma+1-\rho)h$. \vsp
  
  \noindent\textbf{2-microlocal frontier (lower-bound).} In this last step of the proof, we aim to show that the 2-microlocal frontier of every $t\in G(h,\bm{p})$ has a chirp oscillation shape. 

  Let us set $\omega\in\Omega$ and $t\in G(h,\bm{p})$. As previously outlined in this work, we know that we may ignore the component of Lévy process which corresponds to the jumps of size greater than $2^{-j_{n(\ell_0)}}$. Furthermore, owing to the construction of the set $G(h,\bm{p})$, we know that for every $\ell\in\N$, the distance between $t$ and the closest jump time $s$ such that $\abs{\Delta X_s}\in\ivff{2^{-j_{n(\ell)}-1},2^{-j_{n(\ell)}}}$, satisfies
  \[
     2^{-j_{n(\ell)}/h-2} \leq \abs{s-t} \leq 2^{-j_{n(\ell)}/h+1}. 
  \]
  Therefore, owing to the characterisation~\eqref{eq:charac_Eh} of the set $E_h$, $G(h,\bm{p})\subset E_h$, i.e. $\alpha_{X,t} = h$.

  We aim to prove that the 2-microlocal frontier of $X$ at $t$ shows a chirp oscillation behaviour: $\sigma_{X,t}(s') > h+s'$ for all $s'<-h$. Similarly to the proof of Theorem~\ref{th:2ml_levy}, we therefore investigate the regularity of the integral of $X$. In addition, we assume that $\beta \geq 1$, as the proof in the other case $\beta<1$ is completely similar.

  Let us set $u\in\R$, $\eps>0$ and $h'\eqdef h(1+\eps)>h$. There exist $m>0$ such that $2^{-(m+1)/h'} < \abs{t-u} \leq 2^{-m/h'}$. Furthermore, let $\ell\in\N$ be the greatest integer such that $j_{n(\ell)} \leq m$. We have to distinguish two different cases depending on the value of $m$. 

  Let us first suppose that $j_{n(\ell)}(1+\eps) \leq m$. Since $1/h' > \beta$, Lemma~\ref{lemma:tech_lemma2} implies that 
  \[
    \forall v\in B(t,\abs{u-t});\quad \abs{ X^m_v - X^m_t } \leq c \,m2^{-m} \leq c \,\log\pthb{\abs{u-t}^{-1}}\,\abs{ u-t }^{h'}.
  \]
  Furthermore, we note that $\abs{t-u} \leq 2^{-j_{n(\ell)}(1+\eps)/h'} = 2^{-j_{n(\ell)}/h}$, implying there is no jump time $s$ such that $\abs{\Delta X_s}\geq 2^{-m}$ and $s\in B(t,\abs{u-t})$. Using in addition the estimates on the drift obtained in Proposition~\ref{prop:pointwise_levy}, we obtain
  \[
    \forall v\in B(t,\abs{u-t});\quad \abs{ X_v - X_t } \leq c \,\log\pthb{\abs{u-t}^{-1}}\,\abs{ u-t }^{h'}.
  \]
  Therefore,
  \[
    \absbb{ \int_t^u \pth{X_v-X_t}\,\dt v } \leq c \,\log\pthb{\abs{u-t}^{-1}}\,\abs{ u-t }^{1+h'},
  \]
  where we recall that $h'>h$.

  We now consider the second case $j_{n(\ell)}\leq m \leq j_{n(\ell)}(1+\eps)$. As previously, we know that for all $v\in B(t,\abs{u-t})$, $\abs{ X^m_v - X^m_t } \leq c \,\log\pthb{\abs{u-t}^{-1}}\,\abs{ u-t }^{h'}$. Nevertheless, in this case, there might exist a \emph{double-jump} of size $2^{-j_{n(\ell)}}$ inside the interval $B(t,\abs{u-t})$. Owing to the construction of the set $G(h,\bm{p})$, the contribution of the \emph{double-jump} to $\abs{ \int_t^u \pth{X_v-X_t}\,\dt v }$ is upper-bounded by
  \begin{align*}
    2^{-j_{n(\ell)}} \cdot 2^{-j_{n(\ell)}\gamma} + 2^{-j_{n(\ell)}\rho} \,\abs{u-t}
    &\leq c\,\abs{u-t}^{h'(1+\gamma)/(1+\eps)} + c\, \abs{u-t}^{\rho\, h'/(1+\eps)+1} \\
    &= c\,\abs{u-t}^{h(1+\gamma)} + c\, \abs{u-t}^{\rho h+1}.
  \end{align*}
  In the previous exponents, we note that $\rho>1$ and $\gamma>1/h$, implying that $\rho h+1 > h+1$ and $h(1+\gamma) > h+1$. Hence, we have proved there exists $\eps_0>0$ such that
  \[
    \absbb{ \int_t^u \pth{X_v-X_t}\,\dt v } \leq c \,\abs{ u-t }^{1+h+\eps_0},
  \]
  for all $u$ in the neighbourhood of $t$.
  This last inequality proves that the regularity at $t$ is singular, as the 2-microlocal frontier must satisfy
  \begin{align*}
    \forall s'\leq -h;\quad \sigma_{X,t}(s') \geq \frac{s'+h}{1+\eps_0}.
  \end{align*}
  Therefore, with probability one, for all $h\in\ivoo{1/(\alpha_1+\alpha_2),1/\beta}$, $G(h,\bm{p})\subset \widehat{E}_h$, and $\dimH\,\widehat{E}_h \geq (\alpha_1+\alpha_2-\gamma+1-\rho)h$. Considering rational parameters such that $\gamma\rightarrow 1/h$ and $\rho\rightarrow 1$, we obtain $\dimH\,\widehat{E}_h \geq (\alpha_1+\alpha_2) h-1$. Finally, since the previous reasoning holds on any interval $\ivff{a,b}$ where $a,b\in\Q$, with probability one, we have proved the expected lower bound of the Hausdorff dimension:
  \[
    \forall V\in\Oi, \ \forall h\in\ivoo{1/(\alpha_1+\alpha_2)), 1/\beta};\quad  \dimH \pth{\widehat{E}_h\cap V} \geq (\alpha_1+\alpha_2) h-1.
  \]
\end{proof}


\section{Linear (multi)fractional stable motion}  \label{sec:2ml_lfsm}

The linear fractional stable motion (LFSM) is a stochastic process that has been considered by several authors: \citet{Maejima-1983,Takashima-1989,Kono.Maejima-1991,Samorodnitsky.Taqqu-1994,Ayache.Roueff.ea-2009,Ayache.Hamonier-2013}. Its general integral form is defined by
\begin{align} \label{eq:def_lfsm2}
  X_t = \int_\R \brcB{ & a^+ \bktb{ (t-u)_+^{H-1/\alpha} - (-u)_+^{H-1/\alpha} } \nonumber \\
  +& a^- \bktb{ (t-u)_-^{H-1/\alpha} - (-u)_-^{H-1/\alpha} } } \,M_{\alpha}(\dt u),
\end{align}
where $H\in\ivoo{0,1}$, $(a^+,a^-)\in\R^2\setminus(0,0)$ and $M_{\alpha}$ is an $\alpha$-stable random measure on $\R$ with Lebesgue control measure $\lambda$ and skewness intensity $\beta_\alpha(\cdot)\in\ivff{-1,1}$. Throughout this paper, it is assumed that $\beta_\alpha$ is constant, and equal to zero when $\alpha=1$. In this context, for any Borel set $A\subset\R$, the characteristic function of $M_{\alpha}(A)$ is given by
\[
  \espb{ e^{i\theta M_{\alpha}(A)} } = 
  \begin{cases}
    \exp\brcb{ -\lambda(A)\abs{\theta}^\alpha \pthb{ 1 - i\beta_\alpha \,\text{sign}(\theta)\tan(\alpha\pi/2) } } \quad &\text{if } \alpha\in\ivoo{0,1}\cup\ivoo{1,2}; \\
    \exp\brcb{ -\lambda(A)\abs{\theta} } &\text{if }\alpha = 1.
  \end{cases}
\]
For the sake of readability, we consider in the rest of the section the particular case $(a^+,a^-) = (1,0)$ (even though as stated \cite{Samorodnitsky.Taqqu-1994}, the law of the process depends on values $(a^+,a^-)$ chosen).

To begin with, we present in the next statement an alternative representation for the two-parameter field
$(t,H) \mapsto X(t,H) = \int_\R  \brcb{ (t-u)_+^{H-1/\alpha} - (-u)_+^{H-1/\alpha} }  \,M_{\alpha}(\dt u)$.
In the case $H\geq H/\alpha$, the formula has been previously obtained by \citet{Takashima-1989}.
\begin{proposition}  \label{prop:rep_lfsm}
 For all $t\in\R$ and $H\in\ivoo{0,1}$, the random variable $X(t,H)$ satisfies
 \begin{equation} \label{eq:lmsm_representation}
   X(t,H) \eqas
   \begin{cases}
     \displaystyle C_H\int_\R L_u \brcB{ (t-u)_+^{H-1/\alpha-1} - (-u)_+^{H-1/\alpha-1} } \,\dt u&\text{if } H\in\ivfob{\frac{1}{\alpha},1}; \\
     L_t &\text{if } H = \frac{1}{\alpha} \\
     \displaystyle C_H\int_\R \brcB{ (L_u - L_t) (t-u)_+^{H-1/\alpha-1} - L_u (-u)_+^{H-1/\alpha-1} } \,\dt u& \text{if } H\in\ivofb{0,\frac{1}{\alpha}},
   \end{cases}
 \end{equation}
 where $C_H = H - 1/\alpha$ and $L$ is an $\alpha$-stable L\'evy process defined by
 \[
   \forall t\in\R_+\quad L_t =  M_{\alpha}(\ivff{0,t})\quad\text{and}\quad \forall t\in\R_-\quad L_t = -M_{\alpha}(\ivff{t,0}).
 \]
\end{proposition}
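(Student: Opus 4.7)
I would prove this pathwise identity by reformulating the right-hand side as a stable stochastic integral via stochastic Fubini and matching the resulting kernel with $f_{t,H}(u) := (t-u)_+^{H-1/\alpha} - (-u)_+^{H-1/\alpha}$. The starting point is the elementary identity $L_u = \int_\R \psi_u(s)\,M_{\alpha,\beta}(\dt s)$ with $\psi_u = \indi_{[0,u]}$ for $u \geq 0$ and $\psi_u = -\indi_{[u,0]}$ for $u < 0$. The case $H = 1/\alpha$ is then immediate since $f_{t,1/\alpha}(u) = \psi_t(u)$, so $X(t,1/\alpha) = \int \psi_t\,\dt M_{\alpha,\beta} = L_t$.

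For $H > 1/\alpha$, let $g(u) := (t-u)_+^{H-1/\alpha-1} - (-u)_+^{H-1/\alpha-1}$. This kernel is locally bounded and behaves like $\abs{u}^{H-1/\alpha-2}$ as $u \to -\infty$ (Taylor expansion of the difference), so $\int_\R \esp{\abs{L_u}}\abs{g(u)}\,\dt u = c\int_\R \abs{u}^{1/\alpha}\abs{g(u)}\,\dt u < \infty$ when $\alpha > 1$. Stochastic Fubini then gives
\[
C_H \int_\R L_u\,g(u)\,\dt u = \int_\R K(t,s)\,M_{\alpha,\beta}(\dt s),\qquad K(t,s) := C_H\int_\R \psi_u(s)\,g(u)\,\dt u,
\]
and a direct computation splitting over $s \in (0,t)$ and $s < 0$ (using $\int_s^t(t-u)^{H-1/\alpha-1}\,\dt u = (t-s)^{H-1/\alpha}/C_H$ for the first case and the cancellation at $-\infty$ between the two pieces of $g$ for the second) yields $K(t,s) = f_{t,H}(s)$. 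Hence the right-hand side equals $X(t,H)$.

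The case $H < 1/\alpha$ is more delicate because $g$ now has non-integrable singularities at $u = 0$ and $u = t$; this is why the formula involves $L_u - L_t$ to regularize near $u = t$, while no subtraction is required at $u = 0$ since $L_0 = 0$. I would proceed analogously, writing $L_u - L_t = \int_\R (\psi_u - \psi_t)(s)\,M_{\alpha,\beta}(\dt s)$ and applying stochastic Fubini separately to each piece of the RHS. The joint integrability required for Fubini on the first piece reduces to $\int \norm{\psi_u - \psi_t}_{L^\alpha}\abs{t-u}^{H-1/\alpha-1}\,\dt u \lesssim \int \abs{u-t}^{H-1}\,\dt u$, finite on compact sets since $H > 0$, with behavior at $-\infty$ controlled as before. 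Fubini can be legitimized by first excising $\delta$-neighborhoods of $0$ and $t$ and letting $\delta \to 0$, using $L^\alpha$-convergence on the stable side and the a.s.\ H\"older control $L_u - L_t = O(\abs{u-t}^{1/\alpha-\eta})$ at the fixed (a.s.\ non-jump) time $t$ on the Lebesgue side. Combining the two resulting kernels and checking pointwise that the total equals $f_{t,H}(s)$ -- with the subtractions exactly compensating the otherwise divergent pieces -- completes the proof.

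The main obstacle is the $H < 1/\alpha$ case, where the two singular points of $g$ coexist and one must carefully track the cancellations produced by the $L_u - L_t$ subtraction; the key facts being exploited are $L_0 = 0$ (handling the singularity at $u = 0$) and the a.s.\ local H\"older regularity of $L$ at a fixed non-jump time $t$ (handling the singularity at $u = t$), together with the approximation argument needed to legitimize stochastic Fubini in the presence of these singularities.
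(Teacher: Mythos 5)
Your strategy is genuinely different from the paper's: you identify the right-hand side with a stable stochastic integral via a stochastic Fubini theorem and match kernels, whereas the paper works pathwise, applying the classical integration-by-parts formula to $\int_b^{t-\eps}(t-u)^{H-1/\alpha}\,\dt L_u$ and then sending $\eps\to0$ and $b\to-\infty$, controlling the boundary terms with Pruitt's growth estimate $\limsup_{u\to-\infty}\abs{u}^{-1/\alpha-\eps}\abs{L_u}=0$ and, for $H<1/\alpha$, with the a.s.\ pointwise exponent $\alpha_{L,t}=1/\alpha$ at the fixed time $t$. For $H\geq 1/\alpha$ your route is correct and arguably cleaner: the Fubini condition $\int_\R\norm{\psi_u}_{L^\alpha}\abs{g(u)}\,\dt u<\infty$ (the form required by the stable Fubini theorem, which for $\alpha>1$ coincides up to constants with your moment condition) holds, and the kernel computation does return $f_{t,H}$.

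The case $H<1/\alpha$, however, contains a genuine gap. You propose to apply stochastic Fubini ``separately to each piece'' $(L_u-L_t)(t-u)_+^{H-1/\alpha-1}$ and $-L_u(-u)_+^{H-1/\alpha-1}$, and you verify integrability near the singularities at $u=t$ and $u=0$; but at $u\to-\infty$ each piece taken alone is not absolutely integrable, neither pathwise nor in the sense of the Fubini condition: one has $\norm{\psi_u-\psi_t}_{L^\alpha}(t-u)^{H-1/\alpha-1}\sim\abs{u}^{H-1}$ and $\norm{\psi_u}_{L^\alpha}(-u)^{H-1/\alpha-1}\sim\abs{u}^{H-1}$, and $\int^{-\infty}\abs{u}^{H-1}\,\dt u=\infty$ since $H>0$. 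The convergence of the right-hand side at $-\infty$ comes only from recombining the pieces as $L_u\bktb{(t-u)^{H-1/\alpha-1}-(-u)^{H-1/\alpha-1}}-L_t(t-u)^{H-1/\alpha-1}$, where the bracket decays like $\abs{u}^{H-1/\alpha-2}$ and the $L_t$ term is integrable precisely because $H<1/\alpha$. Your phrase ``behavior at $-\infty$ controlled as before'' invokes exactly this cancellation, which is unavailable once the pieces are separated. The gap is repairable --- split $\R$ into $(-\infty,b)$ and $[b,\infty)$, use your decomposition near $0$ and $t$ on the compact part, and the regrouped decomposition on the tail --- but this regrouping is an essential step that the proposal omits; the excision argument you describe addresses only the singularities at $0$ and $t$, not the tail.
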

\begin{proof}
  For the sake of readiness, we present in the proof for any $H\in\ivoo{0,1}$, even though the first case can be found in \cite{Takashima-1989}.
  Suppose $t\in\R$ and $H\in\ivoo{0,1}$. Since $(L_t)_{t\in\R}$ is an $\alpha$-stable L\'evy process, it has c\`adl\`ag sample paths. According to \cite{Applebaum-2009} (chap. 4.3.4), the theory of the stochastic integration based $\alpha$-stable L\'evy processes coincide integrals with respect to $\alpha$-stable random measure. Therefore, the r.v. $X(t,H)$ is almost surely equal to
  $\int_\R  \brcb{ (t-u)_+^{H-1/\alpha} - (-u)_+^{H-1/\alpha} }  \,\dt L_u$.
  Let $\eps>0$ and $b < t$. Using a classic integration by parts, we obtain
  \begin{align} \label{eq:int_part_stable}
    L_{t-\eps} \eps^{H-1/\alpha} - L_s (t-b)^{H-1/\alpha} &= \int_b^{t-\eps} (t-u)^{H-1/\alpha} \,\dt L_u \nonumber \\
    &- \pthB{H - \frac{1}{\alpha}} \int_b^{t-\eps} L_{u} (t - u)^{H-1/\alpha - 1} \,\dt u.
  \end{align}
  \begin{enumerate}[ \it 1.]
    \item If $H\in\ivoob{\frac{1}{\alpha},1}$, $H-1/\alpha > 0$. Hence, $\int_b^{t-\eps} L_{u} (t - u)^{H-1/\alpha - 1} \,\dt u$ almost surely converges to $\int_b^t L_{u-} (t - u)^{H-1/\alpha - 1} \,\dt u$ when $\eps\rightarrow 0$. Similarly, $\int_b^{t-\eps} (t-u)^{H-1/\alpha} \,\dt L_u$ converges in $L^\alpha(\Omega)$. Therefore, using Equation~\eqref{eq:int_part_stable} with $t=0$ and $b < 0$, we obtain almost surely
    \begin{align*}
      \int_b^t  \brcB{ (t-u)^{H-1/\alpha} - (-u)^{H-1/\alpha} }  \,\dt L_u 
      &= C_H\int_b^t L_u \brcB{ (t-u)^{H-1/\alpha-1} - (-u)^{H-1/\alpha-1} } \,\dt u \\
      &- L_b\brcB{ (t-b)^{H-1/\alpha} - (-b)^{H-1/\alpha} }.
    \end{align*}
    When $b\rightarrow-\infty$, the left-term clearly converges to $X(t,H)$ in $L^\alpha(\Omega)$.
    According to \cite{Pruitt-1981}, we know that almost surely for any $\eps>0$, $\limsup_{u\rightarrow -\infty} u^{1/\alpha+\eps} \abs{L_u} = 0$. Furthermore, we also have $(t-u)^{H-1/\alpha-1} - (-u)^{H-1/\alpha-1} \sim_{-\infty} (-u)^{H-1/\alpha-2}$ and $(t-b)^{H-1/\alpha} - (-b)^{H-1/\alpha} \sim_{-\infty} (-b)^{H-1/\alpha-1}$. Therefore, as $H < 1$ and using the dominated convergence theorem, the right-term almost surely converges to the expected integral.\vsp
    
    \item If $H\in\ivoob{0,\frac{1}{\alpha}}$, we observe that Equation~\eqref{eq:int_part_stable} can be slightly transformed into
    \begin{align*}
      &(L_{t-\eps} -L_t)\eps^{H-1/\alpha} - (L_b-L_t) (t-b)^{H-1/\alpha} \\
      &= \int_b^{t-\eps} (t-u)^{H-1/\alpha} \,\dt L_u - \pthB{H - \frac{1}{\alpha}} \int_b^{t-\eps} (L_{u}-L_t) (t - u)^{H-1/\alpha - 1} \,\dt u.
    \end{align*}
    According to \cite{Pruitt-1981}, $\alpha_{Y,t} \eqas 1/\alpha$. Therefore, up to an extracted sequence, the previous expression almost surely converges when $\eps\rightarrow 0$ and using a similar formula for $t=0$, we obtain
    \begin{align*}
      &\int_b^t  \brcB{ (t-u)^{H-1/\alpha} - (-u)^{H-1/\alpha} }  \,\dt L_u \\
      &= C_H\int_b^t \brcB{ (L_u - L_t) (t-u)^{H-1/\alpha-1} - L_u (-u)^{H-1/\alpha-1} } \,\dt u \\
      &- L_b\brcB{ (t-b)^{H-1/\alpha} - (-b)^{H-1/\alpha} } + L_t (t-b)^{H-1/\alpha}.
    \end{align*}
    The property $\limsup_{u\rightarrow -\infty} u^{1/\alpha+\eps} \abs{L_u} = 0$ and the previous equivalents finally prove Equation~\eqref{eq:lmsm_representation}.
  \end{enumerate}
  To end this proof, let us consider the integral representation in the particular case $H=1/\alpha$. In fact, Equation~\eqref{eq:lmsm_representation} is a slightly misuse since the expression does not exist. Nevertheless, we prove that it converges almost surely to $X(t,1/\alpha) = L_t$ when $H\rightarrow 1/\alpha$. 
  
  Suppose first that $H\nearrow 1/\alpha$ and rewrite $X(t,H)$ as
  \[
    X(t,H) = C_H\int_\R \brcB{ (L_u - L_t\indi_{u\geq b}) (t-u)_+^{H-1/\alpha-1} - L_u (-u)_+^{H-1/\alpha-1} } \,\dt u + L_t(t-b)^{H-1/\alpha},
  \]
  The first component of the expression converges to zero since $C_H\rightarrow_{H\rightarrow 1/\alpha} 0$ and $\alpha_{Y,t} \eqas 1/\alpha$. As the second part simply converges to $L_t$, we get the expected limit.
  The case $H\searrow 1/\alpha$ is treated similarly.
\end{proof}
Note that \citet{Picard-2011} has determined a similar representation for fractional Brownian motion.

\begin{proof}[Proof of Theorem \ref{th:2ml_lfsm}]
  Let us set $H\in\ivoo{0,1}$ and $\alpha\in\ivfo{1,2}$. In order to obtain the multifractal structure of the LFSM, we first relate the 2-microlocal frontier of $X$ at $t$ to the frontier of the alpha-stable process $L$. 
  \begin{enumerate}[ \it 1.]
    \item If $H > 1/\alpha$, we note that the representation obtained in Proposition \ref{prop:rep_lfsm} is defined almost surely for all $t\in\R$. Therefore, let us set $\omega\in\Omega$ and $t\in\R$. As previously, we can assume that $t\in\ivff{0,1}$. Then,  
    \begin{align*}
      X_t &= C_H\int_b^t L_u (t-u)_+^{H-1/\alpha-1} \,\dt u + C_H\int_b^0 L_u (-u)_+^{H-1/\alpha-1} \,\dt u \\
      &+ C_H\int_{-\infty}^b L_u \brcB{ (t-u)_+^{H-1/\alpha-1} - (-u)_+^{H-1/\alpha-1} } \,\dt u,
    \end{align*}
    where $b < 0$ is fixed. The second term is simply a constant that does not influence the regularity. Similarly, using the dominated convergence theorem, we note that the third one is a smooth function on the interval $\ivff{0,1}$, and therefore has no impact on the 2-microlocal frontier. 

    Therefore, we only need to focus on the first term. Let us define the process $Y_u = L_u\indi_{\brc{u\geq t}}$. Since the 2-microlocal spaces and frontier presented in Definitions~\ref{def:2ml_spaces} and \ref{def:2ml_spaces_gen} are localised at a point $t$, we necessarily have $\sigma_{L,t} = \sigma_{Y,t}$. Furthermore, we note that
    \[
      Z_t \eqdef C_H \int_b^t L_u (t-u)_+^{H-1/\alpha-1} \,\dt u = C_H \int_\R Y_u (t-u)_+^{H-1/\alpha-1} \,\dt u = \pthb{I^{H-1/\alpha}_+ Y}(t)
    \]
    Owing to the property of stability of 2-microlocal spaces under fractional integration (see see Theorem 1.1 in \cite{Jaffard.Meyer-2000}), we obtain $\sigma_{Z,t} = \sigma_{Y,t}+H-1/\alpha$, and therefore $\sigma_{X,t} = \sigma_{L,t}+H-1/\alpha$.\vsp
    
    \item If $H < 1/\alpha$, we first observe that according to the multifractal spectrum of alpha-stable processes and $H > 0$, $\dimH(\brc{ t\in\R : \alpha_{L,t} \leq 1/\alpha - H })  < 1$. Hence, for almost every $\omega\in\Omega$, Formula~\eqref{eq:lmsm_representation} is well-defined almost everywhere on $\R$. Anywhere else, we may simply assume that $X(t,H)$ is set to zero. We will explain later why the value $0$ at these particular times does not modify the 2-microlocal frontier.

    Similarly to the previous case $H>1/\alpha$, the regularity of $X$ only depends on the behaviour of the component
    \[
      Z:t\longmapsto C_H \int_b^t (L_u - L_t) (t-u)_+^{H-1/\alpha-1}\,\dt u.
    \]
    One might recognize a Marchaud fractional derivative (see e.g. \cite{Samko.Kilbas.ea-1993}). Let us modify this expression to exhibit a more classic form of fractional derivative. For almost all $s\in\ivff{0,1}$ and $\eps>0$, we have
    \begin{align*}
      \int_b^{s-\eps} L_u (s-u)^{H-1/\alpha}\,\dt u
      &= C_H \int_b^{s-\eps} L_u \,\dt u \int_{u+\eps}^s (v-u)^{H-1/\alpha-1}\,\dt v + \eps^{H-1/\alpha} \int_b^{s-\eps} L_u\,\dt u \\
      &= C_H \int_b^{s-\eps} \dt u \int_{u+\eps}^s (L_u - L_v)(v-u)^{H-1/\alpha-1}\,\dt v  \\
      &+ \eps^{H-1/\alpha} \int_b^{s-\eps} L_u\,\dt u+ C_H \int_b^{s-\eps} \dt u \int_{u+\eps}^s L_v(v-u)^{H-1/\alpha-1}\,\dt v 
    \end{align*}
    The last two terms are equal to
    \[
      \eps^{H-1/\alpha} \int_b^{s-\eps} L_u\,\dt u - \eps^{H-1/\alpha} \int_{b+\eps}^s L_v\,\dt v + \int_{b+\eps}^s L_v (v-b)^{H-1/\alpha} \,\dt v,
    \]
    which converges to $\int_b^s L_v (v-b)^{H-1/\alpha} \dt v$ as $\eps\rightarrow 0$ since $H-1/\alpha>-1$. Similarly, as we consider times at which $\alpha_{L,s} > 1/\alpha - H$, the dominated convergence theorem implies that the first term converges to $C_H \int_b^s \dt v \int_b^v (L_u - L_v)(v-u)^{H-1/\alpha-1}\,\dt u$. Therefore,
    \[
      \int_b^s L_u (s-u)^{H-1/\alpha}\,\dt u = C_H \int_b^s \dt v \int_b^v (L_u - L_v)(v-u)^{H-1/\alpha-1}\,\dt u + \int_b^s L_v (v-b)^{H-1/\alpha} \,\dt v.
    \]
    According to classic real analysis results, the previous expression is differentiable almost everywhere on the interval \ivff{0,1}, and therefore
    \[
      Z_t \overset{\text{a.e.}}{=} \frac{\dt}{\dt t} \int_b^t L_u (t-u)^{H-1/\alpha}\,\dt u -  L_t (t-b)^{H-1/\alpha},
    \]
    for almost all $t\in\ivff{0,1}$. Note that the last two formulas ensure that $Z_\sbullet\in L^1_{\text{loc}}(\R)$, and thus $X_\sbullet\in L^1_{\text{loc}}(\R)$ with probability one.\vsp

    Let us now explain in which sense we investigate the 2-microlocal regularity of $X$. As previously outlined in the introduction, in the case $H<1/\alpha$, sample paths of LFSM are nowhere bounded. As a consequence, it is meaningless to consider the usual Hölder regularity. On the other hand, the 2-microlocal formalism has been introduced in a more general frame which are distributions $\Di'(\R)$. Since we have previously proved that $X_\sbullet\in L^1_{\text{loc}}(\R)$, with probability one, $X_\sbullet$ is a distribution whose 2-microlocal frontier is well-defined. We refer to \cite{Meyer-1998} for a complete presentation of the 2-microlocal spaces for distributions. Also note that in this context, we can modify the values of $X_t$ on the negligible set $\brc{ t\in\R : \alpha_{L,t} \leq 1/\alpha - H }$ without modifying $X_\sbullet$ in the sense of distributions.

    Then, let first consider the term $Y:t\mapsto \int_b^t L_u (t-u)^{H-1/\alpha}\,\dt u$. Since $H-1/\alpha>-1$, it is a Riemann--Liouville fractional integral of order $H-1/\alpha+1>0$.  Hence, using the techniques previously presented, we obtain that $\sigma_{Y,t} = \sigma_{L,t}+H-1/\alpha+1$. Furthermore, the almost everywhere derivative $\frac{\dt}{\dt t} \int_b^t L_u (t-u)^{H-1/\alpha}\,\dt u$ coincide with the derivative in the sense distribution. Still using the stability of 2-microlocal spaces, the 2-microlocal frontier of the latter is therefore equal to $\sigma_{L,t}+H-1/\alpha$. In addition, the 2-microlocal frontier of $t\mapsto L_t (t-b)^{H-1/\alpha}$ is equal to $\sigma_{L,t}$ (the multiplication with a locally smooth function having no effect). Hence, as $\sigma_{L,t} > \sigma_{L,t}+H-1/\alpha$, we have proved that $\sigma_{Z,t}=\sigma_{L,t}+H-1/\alpha$, and thus $\sigma_{X,t}=\sigma_{L,t}+H-1/\alpha$ with probability one.
  \end{enumerate}
  Therefore, in both cases, we have proved that with probability one and for all $t\in\ivff{0,1}$,
  \[
    \forall s'\in\R;\quad \sigma_{X,t}(s')=\sigma_{L,t}(s')+H-1/\alpha.
  \]
  Then, using the same reasoning as in the proof of Corollary~\ref{cor:2ml_levy}, we observe that
  \begin{equation*}
    \forall s\in\ivff{0,1/\alpha};\quad \widetilde{E}_{s-H+1/\alpha}(L)  \subseteq E_{\sigma,s'}(X) \subseteq \widetilde{E}_{s-H+1/\alpha}(L) \cup \bigcup_{h<s-H+1/\alpha} \widehat{E}_{h}(L).
  \end{equation*}
  for any $\sigma < H-1/\alpha$. Furthermore, since $:$ is an alpha-stable process, Theorem~\ref{th:2ml_levy} induces that 
  \[
    \dimH\, \widetilde{E}_{s-H+1/\alpha}(L) = \alpha\pthb{s-H+1/\alpha} = \alpha(s-H)+1
  \]
  and $\dimH\,\widehat{E}_{h}(L) < \alpha(s+H)-1$ for every $h<s-H+1/\alpha$. These two estimates clearly prove the spectrum presented in Equation~\eqref{eq:2ml_spectrum_lfsm}.
  Finally, the spectrum of singularity for the weak scaling exponent is obtained similarly.
\end{proof}

Another class of processes similar to the LFSM has been introduced and studied in \cite{Benassi.Cohen.ea-2004,Marquardt-2006,Cohen.Lacaux.ea-2008}. Named \emph{fractional L\'evy processes}, it is defined by
\[
  X_t = \frac{1}{\Gamma(d+1)} \int_\R \brcB{ (t-u)_+^d - (-u)_+^d }\, L(\dt u),
\]
where $d\in\ivoo{0,1/2}$ and $L$ is a L\'evy process enjoying $Q=0$ (no Brownian component), $\esp{L(1)} = 0$ and $\esp{L(1)^2} < +\infty$. Owing to this last assumption on $L$, LFSMs are not fractional L\'evy processes. Nevertheless, their multifractal regularity can be determined as well.
\begin{proposition}  \label{prop:2ml_flp}
  Suppose $X$ is a fractional L\'evy process parametrized by $d\in\ivoo{0,1/2}$. Then, with probability one and for all $\sigma\leq d$,
  \begin{equation} \label{eq:2ml_spectrum_flp}
    \forall V\in\Oi;\quad \dimH\pth{ E_{\sigma,s'}\cap V } =
    \begin{cases}
      \beta (s - d)   & \text{ if } s\in\ivffb{d,d+\tfrac{1}{\beta}}; \\
      -\infty   & \text{ otherwise.}
   \end{cases}
  \end{equation}
  where $\beta$ designates the Blumenthal--Getoor exponent of the L\'evy process $L$. Furthermore, for all $s'\in\R$, $E_{\sigma,s'}$ is empty if $\sigma > d$. 
\end{proposition}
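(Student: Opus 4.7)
The plan is to mimic the argument used in the continuous case $H>1/\alpha$ of Theorem \ref{th:2ml_lfsm}: obtain an alternative representation of $X$ as a Riemann--Liouville fractional integral of order $d$ of the underlying L\'evy process $L$, then use the translation property \eqref{eq:2ml_spaces_int} of the 2-microlocal frontier together with Corollary \ref{cor:2ml_levy} applied to $L$. Nothing specific to $\alpha$-stable measures was used in that part of the LFSM proof, only that $L$ has c\`adl\`ag paths and a locally controlled growth at $\pm\infty$, which is guaranteed here by the assumptions $\esp{L(1)}=0$ and $\esp{L(1)^2}<\infty$.

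Concretely, I would first apply stochastic integration by parts to the defining integral, in the spirit of Proposition \ref{prop:rep_lfsm}. Splitting into $\int_0^t$ and $\int_{-\infty}^0$, using $L_0=0$, the fact that $(t-u)_+^d$ vanishes at $u=t$ since $d>0$, and the almost sure growth $L_u=\mathrm{O}(|u|^{1/2+\eps})$ coming from the $L^2$ assumption (Kolmogorov's law of the iterated logarithm), the boundary contributions at $\pm\infty$ collapse because $d<1/2$, and I expect to obtain almost surely
\[
  X_t = \frac{1}{\Gamma(d)}\int_{-\infty}^t L_u\,(t-u)^{d-1}\,\dt u - C_0,
\]
where $C_0=\frac{1}{\Gamma(d)}\int_{-\infty}^0 L_u(-u)^{d-1}\,\dt u$ is a random $t$-independent constant, the convergence of this last integral again being ensured by $(t-u)^{d-1}-(-u)^{d-1}=\mathrm{O}((-u)^{d-2})$ combined with $d<1/2$.

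I would then fix $b<0$ and decompose the resulting expression as a local part $\frac{1}{\Gamma(d)}\int_b^t L_u(t-u)^{d-1}\,\dt u$ plus a tail $\frac{1}{\Gamma(d)}\int_{-\infty}^b L_u(t-u)^{d-1}\,\dt u$. On any compact subinterval of $(b,\infty)$ the tail is $C^\infty$ in $t$, since its $n$-th derivative has integrand of order $|u|^{d-1/2-n+\eps}$ which is integrable at $-\infty$ for every $n\geq 1$; smooth terms contribute nothing to the 2-microlocal frontier. The local part is precisely the Riemann--Liouville fractional integral of order $d$ of the L\'evy process $L$ (restricted to $[b,\cdot]$), so applying \eqref{eq:2ml_spaces_int} yields almost surely
\[
  \forall t\in\R_+,\ \forall s'\in\R;\quad \sigma_{X,t}(s') = \sigma_{L,t}(s') + d.
\]

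Once this identity is in hand, the proof concludes mechanically. Since $L$ has no Gaussian component by hypothesis, its Blumenthal--Getoor exponents satisfy $\beta'=\beta$, and the translation identity reads $E^X_{\sigma,s'}=E^L_{\sigma-d,\,s'}$. Setting $s=\sigma-s'$, the parameter $s-d$ plays the role of the 2-microlocal parameter for $L$, and Corollary \ref{cor:2ml_levy} directly produces the spectrum \eqref{eq:2ml_spectrum_flp}, the endpoint case $s=d+1/\beta$ fitting continuously because $\beta\cdot(1/\beta)=1$. The emptiness statement for $\sigma>d$ transfers from the corresponding statement $\sigma'>0$ in Corollary \ref{cor:2ml_levy}. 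I expect the main obstacle to be the integration by parts step: the kernel $(t-u)^d$ is only H\"older continuous at $u=t$, and in the $\int_{-\infty}^0$ piece one needs a careful truncation argument along with the $L^2$ control on $L_u$ to make the boundary terms vanish at $-\infty$; apart from these routine integrability checks, the argument is a direct transcription of the continuous LFSM case.
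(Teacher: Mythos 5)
Your overall strategy coincides with the paper's: the printed proof of Proposition~\ref{prop:2ml_flp} simply invokes Marquardt's Theorem~3.3 for the integration-by-parts representation (the analogue of Proposition~\ref{prop:rep_lfsm}) and then adapts the continuous case of the proof of Theorem~\ref{th:2ml_lfsm} --- local Riemann--Liouville fractional integral of order $d$ plus a smooth far-field term, the shift property \eqref{eq:2ml_spaces_int} of the 2-microlocal frontier, and Corollary~\ref{cor:2ml_levy} applied to $L$ with $\beta'=\beta$ since $Q=0$. Your endgame (the identity $\sigma_{X,t}=\sigma_{L,t}+d$, the reparametrisation $s\mapsto s-d$, and the transfer of the emptiness statement for $\sigma>d$) is exactly what the paper intends and is correct.

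There is, however, a concrete flaw in the intermediate representation you write down. After integration by parts the kernel is $(t-u)_+^{d-1}-(-u)_+^{d-1}$, whose decay $\mathrm{O}\pthb{(-u)^{d-2}}$ at $-\infty$ is what makes the integral against $L_u=\mathrm{O}(\abs{u}^{1/2+\eps})$ converge. Once you separate the two powers over the whole half-line, each individual integral $\int_{-\infty}^t L_u(t-u)^{d-1}\,\dt u$ and $\int_{-\infty}^0 L_u(-u)^{d-1}\,\dt u$ has integrand of order $\abs{u}^{d-1/2+\eps}$ at $-\infty$, which is never integrable for $d\in\ivoo{0,1/2}$ (and the oscillation of $L$ does not save it: the truncated integrals have variance of order $T^{2d+1}$). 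In particular your ``constant'' $C_0$ does not exist, and the justification you offer for its convergence --- the decay of the \emph{difference} of the kernels --- cannot apply to one of the two pieces taken alone. The repair is the decomposition actually used in the proof of Theorem~\ref{th:2ml_lfsm}: fix $b<0$, separate the kernels only on $\ivff{b,t}$ and $\ivff{b,0}$ (where the first piece is the genuine Riemann--Liouville integral and the second a finite constant), and keep the difference $(t-u)^{d-1}-(-u)^{d-1}$ intact in the tail integral over $\ivof{-\infty,b}$, which is then $C^\infty$ in $t$ by dominated convergence. With this correction the remainder of your argument goes through unchanged.
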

\begin{proof}
  \citet{Marquardt-2006} has established (Theorem 3.4) a representation of fractional L\'evy processes equivalent to Proposition~\ref{prop:rep_lfsm}: 
  \[
    X_t = \frac{1}{\Gamma(d)} \int_\R L_u \brcB{ (t-u)_+^{d-1} - (-u)_+^{d-1} }\, \dt u.
  \]
  Based on this result, a straightforward adaptation of the proof of Theorem~\ref{th:2ml_lfsm} yields Equation~ \eqref{eq:2ml_spectrum_flp}.
\end{proof}
Similarly to the LFSM, this statement refines regularity results established in \cite{Benassi.Cohen.ea-2004,Marquardt-2006} and proves that the multifractal spectrum of a fractional L\'evy process is equal to
\begin{equation} \label{eq:spectrum_flp}
  \forall V\in\Oi;\quad d_X(h,V) = 
  \begin{cases}
    \beta \pth{ h - d }   & \text{ if } h\in\ivffb{d,d+\tfrac{1}{\beta}}; \\
    -\infty   & \text{ otherwise}.
  \end{cases}
\end{equation}

Let us finally conclude this section with the proof of Theorem \ref{th:2ml_lmsm}.
\begin{proof}[Proof of Theorem \ref{th:2ml_lmsm}]
  Suppose $(X_t)_{t\in\R}$ is a linear multifractional stable motion with $\alpha\in\ivoo{1,2}$ and Hurst function $H(\cdot)\in\ivoo{1/\alpha,1}$. According to the representation obtained in Proposition~\ref{prop:rep_lfsm}, $X_t$ is almost surely equal to $X(t,H(t))$.

  To begin with, we first use the uniform estimate of the local Hölder exponent obtained by \citet[Th. 8.1]{Ayache.Hamonier-2013} to obtain an upper bound on the 2-microlocal frontier. The latter have proved that with probability one and for all $t\in\R$, $\widetilde{\alpha}_{X,t} = H(t)-1/\alpha$.  In addition, the 2-microlocal frontier is known to satisfy the inequality $\sigma_{X,t} \leq \liminf_{u\rightarrow t} \widetilde{\alpha}_{X,u}$ for any $t\in\R$, which proves that $\sigma_{X,t}\leq H(t)-1/\alpha$ with probability one.

  Let us now set $\omega\in\Omega$ and $t\in\R$ and decompose $X$ into two parts:
  \[
    X_u = X(u,H(t)) + \pthb{ X(u,H(u)) - X(u,H(t)) }.
  \]
  According to the proof of Theorem~\ref{th:2ml_lfsm}, we already that the 2-microlocal frontier of the first component $X(\sbullet,H(t))$ is equal to $\sigma_{L,t}+H(t)-1/\alpha$. As a consequence, we have to prove that the second term $Y_u\eqdef X(u,H(u)) - X(u,H(t))$ is negligible in terms of 2-microlocal regularity. For that purpose, we observe that for any $u,v\in B(t,\rho)$
  \begin{align*}
    Y_u - Y_v 
    &= X(u,H(u)) - X(u,H(t)) - X(v,H(v)) + X(v,H(t)) \\
    &= \int_{H(t)}^{H(u)} \partial_H X(u,h) \,\dt h - \int_{H(t)}^{H(v)} \partial_H X(v,h) \,\dt h.
  \end{align*}
  Therefore, since $H$ is $\delta$-Hölderian,
  \begin{align*}
    \abs{ Y_u-Y_v } 
    &\leq \int_{H(t)}^{H(u)} \absb{ \partial_H X(u,h) - \partial_H X(v,h) } \,\dt h + \int_{H(u)}^{H(v)} \absb{ \partial_H X(v,h) } \,\dt h \\
    &\leq c\,\abs{u-t}^\delta\cdot\abs{u-v}^\gamma + c\,\abs{u-v}^\delta
  \end{align*}
  where $\gamma < \inf_{u\in B(t,\rho)} H(u)-1/\alpha$ and $\delta > \sup_{u\in\R} H(u)$. Using Definition~\ref{def:2ml_spaces} of the 2-microlocal spaces, this inequality proves that 
  $\sigma_{Y,t} \geq \pthb{ \delta+s' }\wedge\pthb{H-1/\alpha}$ for all $s'\in\R$.
  Since $\delta>H(t)$, $\sigma_{X,t}\leq H(t)-1/\alpha$ and $\sigma_{L,t}(s') \leq (1/\alpha+s')\wedge 0$, we therefore obtain with probability one and for all $t\in\R$
  \[
    \forall s'\in\R;\quad \sigma_{X,t}(s') = \sigma_{L,t}(s')+H(t)-1/\alpha.
  \]
  Then, the multifractal structure described in Equation~\eqref{eq:2ml_spectrum_lmsm} is obtained using the same arguments as in the proof of Theorem~\ref{th:2ml_lfsm}.
\end{proof}

  
  

\begin{remark}
  In the case $H(\cdot)$ does not satisfy the assumption $\delta > \sup_{t\in\R} H(t)$, the proof of Theorem \ref{th:2ml_lmsm} can be modified to extend the statement and generalize results obtained in \cite{Stoev.Taqqu-2005}. This complete study is made in \cite{Balanca.Herbin-2013} for the multifractional Brownian motion. For the sake of clarity, we prefer to focus in this work on $(\Hi_0)$-Hurst functions and the multifractal structure of the LMSM presented in Theorem~\ref{th:2ml_lmsm}.
\end{remark}

\begin{remark}
  Even though it is assumed all along this section that $H(\cdot)$ is deterministic, owing to the deterministic representation presented in Proposition \ref{prop:rep_lfsm}, Theorems \ref{th:2ml_lfsm} and \ref{th:2ml_lmsm} still hold if $H(\cdot)$ is a continuous random process. Hence, based on these results, a class of random processes with random and non-homogeneous multifractal spectrum can be easily constructed. A similar extension of the multifractional Brownian motion has been introduced and studied by \citet{Ayache.Taqqu-2005}.
\end{remark}



\end{document}